\documentclass[11pt, notitlepage]{article}
\usepackage{amssymb,amsmath,comment}
\usepackage{amsthm}
\catcode`\@=11 \@addtoreset{equation}{section}
\def\thesection{\arabic{section}}

\def\theequation{\thesection.\arabic{equation}}
\catcode`\@=12
\usepackage{colortbl}
\usepackage{hyperref}
\usepackage[mathscr]{eucal}
\usepackage{epsf}
\usepackage{esint}
\usepackage{a4wide}
\def\R{\mathbb{R}}

\newcommand{\e}{\epsilon}

\newcommand{\noi} {\noindent}

\newcommand{\Tail} {\mathrm{Tail}}
\newcommand{\loc} {\mathrm{loc}}

\setcounter{page}{1}\pagestyle{myheadings}\markboth{\small } {\small Mixed local and nonlocal quasilinear equations}
\usepackage[all]{xy}
\catcode`\@=11
\def\theequation{\@arabic{\c@section}.\@arabic{\c@equation}}
\catcode`\@=12

\newtheorem{Theorem}{Theorem}[section]
\newtheorem{Lemma}[Theorem]{Lemma}
\newtheorem{prop}[Theorem]{Proposition}
\newtheorem{Corollary}[Theorem]{Corollary}
\newtheorem{Remark}[Theorem]{Remark}
\newtheorem{Definition}[Theorem]{Definition}

\newtheorem*{ack}{Acknowledgements}

\begin{document}

{\vspace{0.01in}}

\title
{\sc Higher H\"older regularity for 	mixed local and nonlocal degenerate elliptic equations}
\author{Prashanta Garain and Erik Lindgren}

\maketitle

\begin{abstract}\noindent
We consider equations involving a combination of local and nonlocal degenerate $p$-Laplace operators. The main contribution of the paper is almost Lipschitz regularity for the homogeneous equation and H\"older continuity with an explicit H\"older exponent in the general case. For certain parameters, our results also imply H\"older continuity of the gradient. In addition, we establish existence, uniqueness and local boundedness. The approach is based on an iteration in the spirit of Moser combined with an approximation method.\normalcolor

\medskip

\noi {Keywords: Mixed local and nonlocal $p$-Laplace equation, degenerate elliptic equations, existence and uniqueness, local boundedness, H\"older continuity, Moser iteration.}

\medskip

\noi{\textit{2020 Mathematics Subject Classification: 35B65, 35D30, 35J70, 35R09, 35R11.}}
\end{abstract}
\tableofcontents

\section{Introduction}
\subsection{Overview}
\nocite{BL}
In this article, we study regularity properties of weak solutions of the mixed local and nonlocal $p$-Laplace equation\normalcolor
\begin{equation}\label{maineqn}
-\Delta_p u+(-\Delta_p)^{s}u=f\text{ in }\Omega,
\quad 
\end{equation}
where $\Omega$ is an open and bounded set in $\mathbb{R}^N,\,N\geq 1$. We assume that $0<s<1,\,2\leq p<\infty$ and $f\in L^q_{\rm loc}(\Omega)$ for some $q\geq 1$ (for the precise assumptions, see Sections \ref{sec:main} and \ref{sec:prel}). Here
\begin{equation}\label{plap}
\Delta_p u=\text{div}(|\nabla u|^{p-2}\nabla u),
\end{equation}
is the $p$-Laplace operator and  
\begin{equation}\label{fracplap}
(-\Delta_p)^{s}u(x)=\text{P.V.}\int_{\mathbb{R}^N}\frac{|u(x)-u(y)|^{p-2}(u(x)-u(y))}{|x-y|^{N+ps}}\,dy,
\end{equation}
is the fractional $p$-Laplace operator, where P.V. denotes the principal value.

 The main objective of this article is to establish H\"older regularity of weak solutions of equation \eqref{maineqn}, with an explicit H\"older exponent. This is done in  Theorem \ref{teo:1} and Theorem \ref{nonthm}. From this, H\"older regularity of the gradient follows in the case when $f=0$ and $sp<(p-1)$. We also establish existence and uniqueness  in Theorem \ref{prop:death!} and local boundedness in Theorem \ref{teo:loc_bound}. Our results are presented in detail in the next section.

\subsection{Main results}\label{sec:main}
Here we present the main results of this paper: existence, uniqueness and regularity of weak solutions. For the notion of weak solutions and relevant notation such as $\mathrm{Tail}_{p-1,s\,p,s\,p}$, we refer to Section \ref{sec:prel}. In the theorem below, $p^*$ refers to the Sobolev exponent, see \eqref{eq:sobexp}.
\begin{Theorem}[Existence and uniqueness]
\label{prop:death!}
Suppose $1<p<\infty$, $0<s<1$ and $A>0$. Let $\Omega\Subset \Omega'\subset\mathbb{R}^N$ be two open and bounded sets where $f\in L^q(\Omega)$, with
\[
q\ge (p^*)'\quad \mbox{ if } p\ne N\qquad \mbox{ or }\qquad q>1\quad \mbox{ if }p=N,
\]
and $g\in W^{1,p}(\Omega')\cap L^{p-1}_{sp}(\mathbb{R}^N)$. Then there is a unique weak solution $u\in W_g^{1,p}(\Omega)$  of
\[
\left\{\begin{array}{rcll}
-\Delta_p u+A(-\Delta_p)^s\,u&=&f,&\mbox{ in }\Omega,\\
u&=&g,& \mbox{ in }\mathbb{R}^N\setminus \Omega.
\end{array}
\right.
\]
\end{Theorem}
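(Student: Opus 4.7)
The natural strategy is the direct method of the calculus of variations, since the operator $-\Delta_p u + A(-\Delta_p)^s u$ is the Euler--Lagrange operator of the strictly convex energy
\[
J(u) = \frac{1}{p}\int_\Omega |\nabla u|^p\,dx + \frac{A}{p}\iint_{\mathbb{R}^{2N}\setminus(\mathbb{R}^N\setminus\Omega)^2} \frac{|u(x)-u(y)|^p}{|x-y|^{N+sp}}\,dx\,dy - \int_\Omega f u\,dx,
\]
where in the nonlocal term we omit the exterior$\times$exterior piece (which is a constant on the admissible class, determined by $g$). The plan is to minimize $J$ over the affine class $\mathcal{A} := \{u : u-g \in W^{1,p}_0(\Omega),\ u = g \text{ in } \mathbb{R}^N\setminus\Omega\}$, where $u-g$ is understood extended by zero.

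First I would check that $J$ is well-defined and finite on $\mathcal{A}$, in particular that $J(g) < \infty$. The local term is controlled by $g \in W^{1,p}(\Omega')$. The nonlocal term I would split into $\Omega\times\Omega$, controlled by the Gagliardo $p$-seminorm of $g$ on $\Omega'$ (itself bounded by $\|\nabla g\|_{L^p(\Omega')}$ since $\Omega\Subset\Omega'$), plus a test-function correction coming from $u-g$, and $\Omega\times(\mathbb{R}^N\setminus\Omega)$, controlled by the tail hypothesis $g \in L^{p-1}_{sp}(\mathbb{R}^N)$ together with boundedness of $\Omega$ and the fact that $u=g$ outside. The forcing term is finite by H\"older's inequality and the Sobolev embedding $W^{1,p}(\Omega) \hookrightarrow L^{p^*}(\Omega)$ (resp.\ into $L^q$ for every finite $q$ when $p=N$), which is exactly where the hypothesis $q\ge (p^*)'$ enters.

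Next come the three standard ingredients. \emph{Coercivity:} writing $u = g + \phi$ with $\phi \in W^{1,p}_0(\Omega)$, the Poincar\'e inequality bounds $\|\phi\|_{L^p(\Omega)}$ by $\|\nabla\phi\|_{L^p(\Omega)}$, so $\bigl|\int fu\,dx\bigr|$ is an at-most-linear perturbation of $\|\nabla\phi\|_{L^p}$, and dropping the nonnegative nonlocal term gives $J(u)\to\infty$ as $\|\nabla\phi\|_{L^p}\to\infty$. \emph{Strict convexity:} since $t\mapsto |t|^p$ is strictly convex for $p>1$, both the local and the nonlocal energy pieces are strictly convex in $u$ on $\mathcal A$ (the Dirichlet condition rules out the constant ambiguity). \emph{Weak lower semicontinuity:} each energy piece is convex and continuous on its natural space, hence weakly lower semicontinuous, while the linear forcing term is weakly continuous under the chosen integrability on $f$.

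Given these, the direct method yields a minimizer: a minimizing sequence $\{u_k\}\subset\mathcal A$ has $\{\nabla(u_k-g)\}$ bounded in $L^p(\Omega)$ by coercivity, so up to a subsequence $u_k - g \rightharpoonup \phi$ in $W^{1,p}_0(\Omega)$, and lower semicontinuity makes $u^*=g+\phi$ a minimizer; strict convexity gives uniqueness. The final step is to compute the first variation along $\psi\in W^{1,p}_0(\Omega)$ and verify that the resulting identity is precisely the weak form of the equation, noting that differentiating under the integral sign for the nonlocal piece is legitimate because test functions are compactly supported in $\Omega$. I expect the main obstacle to be the bookkeeping in the nonlocal integrals --- confirming that every split piece is controlled by either the Gagliardo seminorm on $\Omega'$ or the tail norm of $g$ --- rather than any genuine analytic difficulty, since the convex variational structure makes existence and uniqueness essentially automatic.
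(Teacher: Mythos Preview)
Your variational approach is natural, but it has a genuine gap under the stated hypotheses. The theorem assumes only $g\in L^{p-1}_{sp}(\mathbb{R}^N)$, not $g\in L^{p}_{sp}(\mathbb{R}^N)$. Your energy $J$ contains the cross term
\[
\iint_{\Omega\times(\mathbb{R}^N\setminus\Omega')}\frac{|u(x)-g(y)|^p}{|x-y|^{N+sp}}\,dx\,dy,
\]
and for this to be finite you need $\int_{\mathbb{R}^N\setminus\Omega'}|g(y)|^p\,|y|^{-N-sp}\,dy<\infty$, i.e.\ the $L^p$ tail condition. With only the $L^{p-1}$ tail assumed, $J(g)$ can be $+\infty$, so the direct method does not get off the ground; your claim that the $\Omega\times(\mathbb{R}^N\setminus\Omega)$ piece is ``controlled by the tail hypothesis $g\in L^{p-1}_{sp}(\mathbb{R}^N)$'' is precisely where the argument fails. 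The paper itself makes this point explicitly in a remark immediately after the proof: the variational route works, but only under the \emph{stronger} assumption $g\in W^{1,p}(\Omega')\cap L^p_{sp}(\mathbb{R}^N)$.

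The paper instead proves the theorem via the theory of monotone operators (Browder--Minty / Showalter). It defines $\mathcal{A}:W_g^{1,p}(\Omega)\to (W_0^{1,p}(\Omega))^*$ by the weak form and checks that $\mathcal{A}$ is strictly monotone, hemicontinuous, and coercive, then invokes the surjectivity of monotone coercive operators. The point is that the weak formulation involves only $J_p(u(x)-g(y))$, which has $(p-1)$-growth in $g$, so the $L^{p-1}_{sp}$ tail suffices to make the dual pairing finite even though the energy itself may not be. If you strengthen the tail hypothesis to $L^p_{sp}$, your argument would go through and would be essentially equivalent to the paper's remark; as written, it proves a weaker statement than the theorem claims.
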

\begin{Theorem}[Local boundedness]
\label{teo:loc_bound}
Suppose $2\leq p<\infty$, $0<s<1$ and $0\leq A\leq 1$. Let $\Omega\subset\mathbb{R}^N$ be an open and bounded set and $u\in W^{1,p}_{\rm loc}(\Omega)\cap L^{p-1}_{s\,p}(\mathbb{R}^N)$ be a weak solution of 
$$
(-\Delta_p) u+A(-\Delta_p)^s u = f \qquad \mbox{ in }\Omega,
$$
where $f\in L_\text{loc}^{q}(\Omega)$ with
\[
\left\{\begin{array}{lr}
q>\dfrac{N}{p},& \mbox{ if } p\le N,\\
&\\
q\ge 1,& \mbox{ if }p>N.
\end{array}
\right.
\]
Then $u^+=\max\{u,0\}$, satisfies $u^+\in L^\infty_\text{loc}(\Omega)$ and for every $0<R<1$ such that
$B_{R}(x_0)\Subset \Omega$ and every $0<\sigma<1$, there holds
\begin{equation}\label{bdest}
\begin{split}
&\|u^+\|_{L^\infty(B_{\sigma R}(x_0))}\\
&\quad\le C\,\left[\left(\fint_{B_{R}(x_0)} (u^+)^p\, dx\right)^\frac{1}{p}+\mathrm{Tail}_{p-1,s\,p,s\,p}(u^+;x_0, R)+\left(R^{p-\frac{N}{q}}\,\|f\|_{L^q(B_{R}(x_0))}\right)^\frac{1}{p-1}\right],
\end{split}
\end{equation}
where $C=C(N,s,p,q,\sigma)>0$.
\end{Theorem}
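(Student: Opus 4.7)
The plan is a classical Moser iteration for $u^+$, with the only real subtlety being how the nonlocal term and the forcing $f$ are absorbed into an additive shift. Assume without loss of generality $x_0=0$ and write $B_r=B_r(0)$. Set
\[
d:=\mathrm{Tail}_{p-1,sp,sp}(u^+;0,R)+\bigl(R^{p-\tfrac{N}{q}}\|f\|_{L^q(B_R)}\bigr)^{\frac{1}{p-1}},\qquad w:=u^++d.
\]
The choice of $d$ is dictated by the two non-Caccioppoli contributions: the nonlocal tail will appear on the right as $C R^{-sp}\,d^{p-1}$ and the forcing as $C R^{-p}\,d^{p-1}$, so after this shift both become of the same structure as the local error and can be carried through the iteration without altering its geometry.

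For concentric radii $\sigma R\le r'<r\le R$ and a cutoff $\eta\in C_c^\infty(B_r)$ with $\eta\equiv 1$ on $B_{r'}$ and $|\nabla\eta|\le 2/(r-r')$, I would test the equation with $\varphi=\eta^p\,w^{\beta+1}$ for a Moser exponent $\beta\ge 0$. Using $p\ge 2$, the local $p$-Laplacian yields the gradient term $\int \eta^p w^{\beta-1}|\nabla w|^p\,dx$ minus $C(\beta+1)^p(r-r')^{-p}\int_{B_r} w^{\beta+p}\,dx$ after Young's inequality, which, after applying Sobolev's inequality to $\eta\,w^{(\beta+p)/p}$, is the familiar building block
\[
\Bigl(\int_{B_{r'}}w^{(\beta+p)\kappa}\,dx\Bigr)^{\!1/\kappa}\le \frac{C(\beta+1)^p}{(r-r')^p}\int_{B_r} w^{\beta+p}\,dx+(\text{nonlocal})+(\text{forcing}),
\]
with $\kappa=p^\ast/p>1$ when $p<N$ (and Sobolev replaced by any large exponent for $p=N$, respectively Morrey's embedding for $p>N$; in each case the condition on $q$ makes the iteration gain from Sobolev beat the loss from the $q'$-exponent used on the forcing).

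The delicate part is the nonlocal contribution. Decompose the $\mathbb{R}^N\times\mathbb{R}^N$ double integral for $(-\Delta_p)^s$ into an in-in part on $B_r\times B_r$ and an in-out part on $B_r\times(\mathbb{R}^N\setminus B_r)$, plus its symmetric partner. The in-in part, after the standard Young-type decomposition
\[
\varphi(x)-\varphi(y)=\eta(x)^p\bigl(w(x)^{\beta+1}-w(y)^{\beta+1}\bigr)+w(y)^{\beta+1}\bigl(\eta(x)^p-\eta(y)^p\bigr),
\]
produces a non-negative fractional Gagliardo seminorm of $w^{(\beta+p)/p}$ (which may be dropped) plus an error $\le C(r-r')^{-sp}\int_{B_r}w^{\beta+p}\,dx$, absorbable in the local side because $sp<p$. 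On the in-out part, from the elementary bound $-|u(x)-u(y)|^{p-2}(u(x)-u(y))\,w(x)^{\beta+1}\le C(w(x)^{p-1}+u^-(y)^{p-1})\,w(x)^{\beta+1}$ and
\[
\int_{\mathbb{R}^N\setminus B_r}\frac{u^-(y)^{p-1}}{|x-y|^{N+sp}}\,dy\le C(r-r')^{-sp}\,d^{p-1}\qquad(x\in B_{r'}),
\]
the contribution is bounded by $C(r-r')^{-sp}d^{p-1}\int_{B_r} w^{\beta+1}\,dx\le C(r-r')^{-sp}\int_{B_r} w^{\beta+p}\,dx$ by definition of $d$. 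The forcing term is handled analogously by H\"older's inequality and the relation $R^{p-N/q}\|f\|_{L^q(B_R)}\le d^{p-1}$.

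Iterating with $\beta_n$ such that $\beta_n+p=p\kappa^n$ and radii $r_n=\sigma R+(1-\sigma)R/2^n$, and passing to the limit as $n\to\infty$ (the products $\prod_n C^{1/\kappa^n}(\beta_n+1)^{p/\kappa^n}$ converge since $\kappa>1$), yields $\|w\|_{L^\infty(B_{\sigma R})}\le C\bigl(\fint_{B_R}w^p\,dx\bigr)^{1/p}$, which unwinds to \eqref{bdest}. The main obstacle I anticipate is the bookkeeping in the nonlocal step: one must verify that every by-product of the splitting is either of local Caccioppoli form (with power $(r-r')^{-p}$ or the milder $(r-r')^{-sp}$) or multiplies $d^{p-1}$, so that the additive shift $d$ genuinely absorbs the tail and forcing contributions uniformly along the iteration. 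The hypothesis $p\ge 2$ enters through the Young inequalities and the pointwise monotonicity of $t\mapsto|t|^{p-2}t$ used to discard the good fractional seminorm.
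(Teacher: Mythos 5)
Your proposal takes a genuinely different route from the paper. The paper does not run a direct Moser iteration on $u^+$: it rescales, compares $u$ with the solution $v$ of the \emph{homogeneous} problem having the same exterior data, bounds $(u-v)^+$ in $L^\infty$ via Proposition \ref{prop:Linfty} (a Moser scheme on the difference, patterned on Brasco--Parini, where only the forcing needs to be absorbed by a shift $\delta$), and then bounds $v^+$ by invoking the already-known local boundedness for the homogeneous equation from Garain--Kinnunen \cite[Theorem~4.2]{GK}. The triangle inequality finishes the proof. Your direct iteration would, if correct, re-prove the homogeneous result along the way; the perturbative route buys you a much lighter tail bookkeeping.

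However, there is a concrete gap in your handling of the nonlocal in-out term, and it traces back to the choice of test function. With $\varphi=\eta^p w^{\beta+1}$ and $w=u^++d$, one has $\varphi(x)=\eta(x)^p d^{\beta+1}>0$ on $\{u\le 0\}\cap B_r$. On the in-out set, whenever $u(y)>u(x)$ the correct elementary estimate is
\[
-J_p\bigl(u(x)-u(y)\bigr)=\bigl(u(y)-u(x)\bigr)^{p-1}\le C\bigl(u^+(y)^{p-1}+u^-(x)^{p-1}\bigr),
\]
not the bound you wrote (which involves $u^-(y)$ and $w(x)^{p-1}$). The term $u^-(x)^{p-1}$ is not controlled by $w(x)^{p-1}=d^{p-1}$ nor by anything on the right-hand side, so the in-out contribution does not close. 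The standard remedy is to take $\varphi=\eta^p\bigl(w^{\beta+1}-d^{\beta+1}\bigr)$, which vanishes identically on $\{u\le 0\}$; then only points $x$ with $u(x)>0$ contribute, the competitor $u^-(x)$ disappears, and the bad part of the in-out term is genuinely bounded by $u^+(y)^{p-1}$, which $d$ (a tail of $u^+$) is designed to absorb. Relatedly, your displayed tail estimate
\[
\int_{\mathbb{R}^N\setminus B_r}\frac{u^-(y)^{p-1}}{|x-y|^{N+sp}}\,dy\le C(r-r')^{-sp}\,d^{p-1}
\]
cannot hold as stated: besides the $u^-$ versus $u^+$ issue, $d$ controls only the tail over $\mathbb{R}^N\setminus B_R$, while the annulus $B_R\setminus B_r$ contributes a local term of order $(r-r')^{-N-sp}\|u^+\|_{L^{p-1}(B_R)}^{p-1}$ that must be carried in the iteration separately. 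With the corrected test function, the corrected elementary inequality, and the explicit annulus term, your direct iteration should close, but as written the argument has a real hole at precisely the point you flagged as delicate.
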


\begin{Theorem}[Almost Lipschitz regularity]
\label{teo:1}
Suppose $2\leq p<\infty$, $0<s<1$ and $0\leq A\leq 1$. Let $\Omega\subset\mathbb{R}^N$ be an open and bounded set and $u\in W^{1,p}_{\rm loc}(\Omega)\cap L^{p-1}_{s\,p}(\mathbb{R}^N)$ be a weak solution of 
\[
-\Delta_p u+A(-\Delta_p)^s u=0\qquad \mbox{ in }\Omega.
\] 
Then $u\in C^\delta_{\rm loc}(\Omega)$ for every $0<\delta<1$. 
\par
More precisely, for every $0<\delta<1$ and every ball $B_{2R}(x_0)\Subset\Omega$ with $0<R<1$, there exists a constant $C=C(N,s,p,\delta)>0$ such that
\begin{equation}
\label{apriori}
[u]_{C^\delta(B_{R/2}(x_0))}\leq \frac{C}{R^\delta}\,\left(\|u\|_{L^\infty(B_{R}(x_0))}+\mathrm{Tail}_{p-1,s\,p,s\,p}(u;x_0,R)\right).
\end{equation}
\end{Theorem}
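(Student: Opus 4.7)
The natural strategy, as suggested by the abstract and by the work of Brasco--Lindgren--Schikorra on the fractional $p$-Laplacian, is a finite-difference Moser iteration combined with a regularization that removes the degeneracy. First I would approximate $u$ by the unique solution $u_\varepsilon$ (given by Theorem \ref{prop:death!}) of the non-degenerate problem in which $|\nabla u|^{p-2}$ is replaced by $(|\nabla u|^2+\varepsilon^2)^{(p-2)/2}$ and the singular kernel is analogously regularized, keeping the exterior data equal to $u$. A standard monotonicity argument yields $u_\varepsilon\to u$ in $W^{1,p}_{\rm loc}\cap L^{p-1}_{s\,p}$, so it suffices to prove \eqref{apriori} for $u_\varepsilon$ with constants independent of $\varepsilon$.

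The central step is a Caccioppoli inequality for the first differences $v_h(x):=u_\varepsilon(x+h)-u_\varepsilon(x)$. Subtracting the equations satisfied by $u_\varepsilon(\cdot+h)$ and $u_\varepsilon$ and testing with $\eta^p|v_h|^{\beta-1}v_h$ for $\beta\ge 1$ and a cutoff $\eta$ supported in $B_R(x_0)$, the monotonicity of the local and fractional $p$-Laplacians should give
\[
\int_{B_R}\bigl|\nabla(\eta|v_h|^\gamma)\bigr|^p\,dx + \bigl[\eta|v_h|^\gamma\bigr]^p_{W^{s,p}(\R^N)} \le C(\beta+1)^p\int_{B_R}|v_h|^{p\gamma}(|\nabla\eta|+R^{-1})^p\,dx + \mathcal{T},
\]
where $\gamma=(\beta+p-1)/p$ and the nonlocal remainder $\mathcal{T}$ is controlled by a finite power of $\mathrm{Tail}_{p-1,sp,sp}(u;x_0,R)$. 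Combined with Sobolev embedding, this enables a Moser iteration $\beta\mapsto(p^\ast/p)(\beta+p-1)-(p-1)$ analogous to the one used in the proof of Theorem \ref{teo:loc_bound}, yielding, for any $q\gg 1$,
\[
\|v_h\|_{L^\infty(B_{R/2}(x_0))}\le C(q)\left(\fint_{B_R(x_0)}|v_h|^q\,dx\right)^{1/q} + C\,\mathrm{Tail}_{p-1,sp,sp}(u;x_0,R).
\]

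To conclude, I would bound $\|v_h\|_{L^q}$ by interpolation between the trivial estimate $\|v_h\|_{L^\infty}\le 2\|u_\varepsilon\|_{L^\infty}$ (provided by Theorem \ref{teo:loc_bound}) and the $W^{1,p}$-bound $\|v_h\|_{L^p(B_R)}\le |h|\,\|\nabla u_\varepsilon\|_{L^p(B_{R+|h|})}$, the latter being controlled by $\|u\|_{L^\infty}+\mathrm{Tail}$ through a standard Caccioppoli applied to $u$ itself. This yields $\|v_h\|_{L^\infty(B_{R/2})}\le C\,|h|^{p/q}\,M$, with $M$ the right-hand side of \eqref{apriori}. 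Since $q$ is arbitrary, the resulting H\"older exponent $\delta=p/q$ reaches any value below $1$; passing to the limit $\varepsilon\to 0$ then gives \eqref{apriori}.

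The hard part is the Caccioppoli inequality of the second paragraph: it must be established with constants \emph{uniform in $\varepsilon$} and with the sharp $(\beta+1)^p$ dependence on $\beta$ needed for the Moser iteration to converge as $\beta\to\infty$, and the exterior contribution must be peeled off so that only a finite power of the tail survives on the right. The coupling between the local gradient term and the singular nonlocal double integral, together with the careful handling of the principal value, is the central technical difficulty.
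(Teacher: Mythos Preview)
Your plan has a structural gap in the Caccioppoli step that cannot be repaired within the proposed scheme. When you subtract the equations for $u_\varepsilon(\cdot+h)$ and $u_\varepsilon$ and test with $\eta^p|v_h|^{\beta-1}v_h$, the term carrying $\nabla(\eta^p)$ is
\[
\int \big(|\nabla u_h|^{p-2}\nabla u_h-|\nabla u|^{p-2}\nabla u\big)\cdot\nabla(\eta^p)\,|v_h|^{\beta-1}v_h\,dx,
\]
and after Lemma~\ref{in2} and Young's inequality the resulting error is of the form
\[
\int \big(|\nabla u_h|^{p-2}+|\nabla u|^{p-2}\big)\,|v_h|^{\beta+1}\,|\nabla\eta^{p/2}|^2\,dx,
\]
\emph{not} $\int |v_h|^{p\gamma}|\nabla\eta|^p\,dx$. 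For $p>2$ there is no way to absorb the weight $|\nabla u|^{p-2}$ into a pure power of $|v_h|$, so the Caccioppoli inequality you wrote is simply false; the same weighted structure appears in the local-to-local part of the nonlocal double integral. Consequently the infinite Moser iteration in $\beta$ cannot be launched: the right-hand side at step $\beta$ involves $\|\nabla u\|_{L^q}$ for a $q$ that grows with $\beta$, and this is not known a priori. The $\varepsilon$-regularization does not help here, since the weight $(|\nabla u_\varepsilon|^2+\varepsilon^2)^{(p-2)/2}$ has the same behaviour uniformly in $\varepsilon$.

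There is a second gap in the endgame. The tail remainder $\mathcal{T}$ you describe as ``a finite power of $\mathrm{Tail}_{p-1,sp,sp}(u;x_0,R)$'' is a quantity of order one, independent of $h$; in the Moser output it would survive as an additive $O(1)$ term, giving only $\|v_h\|_{L^\infty(B_{R/2})}\le C|h|^{p/q}M + C$, which carries no H\"older information.

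The paper avoids both obstacles by \emph{not} iterating in $\beta$. Proposition~\ref{prop1} tests with $J_{\alpha+1}(\delta_h u/|h|^\theta)\eta^p$ for a \emph{fixed} pair $(\alpha,\theta)$ tied to an integrability exponent $q$, and uses Young's inequality to split the weighted error $|\nabla u|^{p-2}|v_h|^{\alpha+1}$ into $|\nabla u|^q+|\delta_h u/|h||^{q}$. This yields the self-improvement
\[
\sup_{0<|h|<h_0}\left\|\frac{\delta_h^2 u}{|h|^{1+1/(q+1)}}\right\|_{L^{q+1}(B_{R-4h_0})}^{q+1}\le C\Big(\|\nabla u\|_{L^q(B_{R+4h_0})}^q+1\Big),
\]
and Lemma~\ref{2.4} converts the left side into $\nabla u\in L^{q+1}$ on a slightly smaller ball. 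A \emph{finite} iteration $q=p\mapsto p+1\mapsto\cdots\mapsto q_{i_\infty}$ with $q_{i_\infty}>N/(1-\delta)$, seeded by the energy bound of Lemma~\ref{locsemi}, then gives $u\in W^{1,q_{i_\infty}}_{\rm loc}$, and Morrey's embedding finishes. No $\varepsilon$-regularization is used; the normalization $\|u\|_{L^\infty(B_1)}\le 1$ (available from \cite{GK}) is enough to justify the test functions since $\alpha$ stays bounded.
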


\begin{Theorem}[Higher H\"older regularity]
\label{nonthm}
Suppose $2\leq p<\infty$, $0<s<1$ and $0\leq A\leq 1$. Let $\Omega\subset\mathbb{R}^N$ be an open and bounded set, and $f\in L^q_{\text{loc}}(\Omega)$ where
\[
\left\{\begin{array}{lr}
q>\dfrac{N}{p},& \mbox{ if } p\le N,\\
&\\
q\ge 1,& \mbox{ if }p>N.
\end{array}
\right.
\]
Let $\Theta=\min\{(p-N/q)/(p-1),\frac{sp}{p-1},1\}$ and $u\in W^{1,p}_{\rm loc}(\Omega)\cap L^{p-1}_{s\,p}(\mathbb{R}^N)$ be a weak solution of 
$$
-\Delta_p u+A(-\Delta_p)^s u = f \qquad \mbox{ in }\Omega.
$$
Then $u\in C^\delta_{\rm loc}(\Omega)$ for every $0<\delta<\Theta$. 
\par
More precisely, for every $0<\delta<\Theta$ and every ball $B_{4R}(x_0)\Subset\Omega$ such that $R\in (0,1)$, there exists a constant $C=C(N,s,p,q,\delta)>0$ such that 
$$
[u]_{C^{\delta}(B_{R/8}(x_0))}\leq \frac{C}{R^{\delta}}\,\left(\|u\|_{L^\infty(B_{R}(x_0))}+\mathrm{Tail}_{p-1,s\,p,s\,p}(u;x_0,R)+\Big(R^{p-\frac{N}{q}}\,\|f\|_{L^{q}(B_{R})}\Big)^\frac{1}{p-1}\right).
$$
\end{Theorem}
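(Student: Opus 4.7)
The plan is to combine a freezing/comparison argument with a Campanato-type iteration, using the almost Lipschitz regularity from Theorem~\ref{teo:1} as the workhorse. Fix $B_{4R}(x_0) \Subset \Omega$ and, for each $r \in (0, R]$, let $v = v_r$ be the unique weak solution (given by Theorem~\ref{prop:death!}) of the homogeneous Dirichlet problem
$$
-\Delta_p v + A(-\Delta_p)^s v = 0 \text{ in } B_r(x_0), \qquad v = u \text{ on } \mathbb{R}^N \setminus B_r(x_0).
$$
Since both $-\Delta_p$ and $(-\Delta_p)^s$ are invariant under the addition of constants, Theorem~\ref{teo:1} applied to $v - c$ yields, for any $\delta' \in (0, 1)$ and an appropriately chosen constant $c$ (e.g. an average of $u$ over $B_r$),
$$
[v]_{C^{\delta'}(B_{r/2})} \le \frac{C}{r^{\delta'}}\left(\osc(u; B_r) + \Tail_{p-1,s\,p,s\,p}(u - c; x_0, r)\right),
$$
where I have used that $v = u$ on $\mathbb{R}^N \setminus B_r$.

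Next, I control $u - v$. Since $u - v \in W^{1,p}_0(B_r(x_0))$, testing the difference of the two equations against it and invoking the monotonicity of both operators (which renders the nonlocal cross-term nonnegative) gives the purely local energy estimate
$$
\int_{B_r} |\nabla(u - v)|^p\, dx \le C \int_{B_r} |f|\,|u - v|\,dx.
$$
Combining this with H\"older and Sobolev inequalities, or, more robustly, applying Theorem~\ref{teo:loc_bound} to $u - v$ (whose exterior tail vanishes), produces
$$
\|u - v\|_{L^\infty(B_r)} \le C\left(r^{p - N/q}\,\|f\|_{L^q(B_R)}\right)^{1/(p-1)} = C\,r^{\Theta_f}\,\|f\|_{L^q(B_R)}^{1/(p-1)},
$$
with $\Theta_f = (p - N/q)/(p-1)$. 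Putting the two estimates together leads to the key Campanato-type inequality
$$
\osc(u; B_{\tau r}) \le C\,\tau^{\delta'}\left(\osc(u; B_r) + \mathcal{T}(r)\right) + C\,r^{\Theta_f}\|f\|_{L^q(B_R)}^{1/(p-1)},
$$
for every $\tau \in (0, 1/2)$, where $\mathcal{T}(r) := \Tail_{p-1,s\,p,s\,p}(u - c; x_0, r)$ is the nonlocal tail contribution inherited from $v$.

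Iterating this inequality along a geometric sequence $r_k = \tau^k R$ and applying a standard convergence lemma should yield $\osc(u; B_\rho) \le C\rho^\delta M$ for every $\delta < \Theta$ and $\rho \le R/8$, where $M$ is the right-hand side of the claimed estimate. The main obstacle is a careful simultaneous bootstrap of the H\"older exponent and the tail: decomposing the tail into dyadic annuli yields
$$
\mathcal{T}(r)^{p-1} \lesssim \sum_{j \ge 0} 2^{-j\,s\,p}\,\osc\bigl(u; B_{2^j r}\bigr)^{p-1},
$$
so, if $u$ is already known to be in $C^\alpha$ locally, this series converges only for $\alpha(p-1) < sp$, that is $\alpha < sp/(p-1)$. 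This is exactly the origin of the $sp/(p-1)$ threshold in $\Theta$: the H\"older exponent must be lifted step by step (starting from a baseline regularity coming, e.g., from Theorem~\ref{teo:loc_bound} combined with a De~Giorgi-type argument) while keeping the tail uniformly controlled, and the iteration can close only below $\Theta = \min\{\Theta_f,\,sp/(p-1),\,1\}$.
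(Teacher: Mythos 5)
Your proposal is correct in outline and follows essentially the same route as the paper: comparison at each scale with the homogeneous Dirichlet problem, $L^\infty$-closeness of $u$ and the comparison function of order $\bigl(r^{p-N/q}\|f\|_{L^q}\bigr)^{1/(p-1)}$ via a Moser-type iteration (the paper's Proposition \ref{prop:Linfty}), the almost Lipschitz estimate of Theorem \ref{teo:1} for the comparison function, and a scale-by-scale decay argument in which the oscillation and the tail must be controlled simultaneously --- the paper's Proposition \ref{prop:caffsilv} implements precisely this as an induction on geometric scales $\lambda^k$ for a normalized rescaling $w_k$, with the two-part hypothesis \eqref{eq:keq} playing the role of your Campanato inequality plus tail bound, and the restriction $\delta<sp/(p-1)$ entering exactly where you locate it, in the summability of the rescaled tail (see \eqref{eq:spdecay}). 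The one superfluous element is the suggested ``baseline regularity from a De Giorgi-type argument'': no a priori H\"older regularity is needed, since the joint induction on $\sup_{B_{\lambda^k}}|u|$ and on the rescaled tail closes starting from the normalized $L^\infty$ and tail bounds alone at $k=0$.
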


\begin{Corollary}\label{corc1}
Suppose $2\leq p<\infty$, $0<s<1$ and $sp<(p-1)$. Let $\Omega\subset\mathbb{R}^N$ be an open and bounded set, $0\leq A\leq 1$ and $u\in W^{1,p}_{\rm loc}(\Omega)\cap L^{p-1}_{s\,p}(\mathbb{R}^N)$ be a weak solution of 
\[
-\Delta_p u+A(-\Delta_p)^s u=0\qquad \mbox{ in }\Omega.
\]
Then $u\in C^{1,\alpha}_{\rm loc}(\Omega)$ for some $\alpha\in (0,1)$. 
\par
More precisely, for every ball $B_{2R}(x_0)\Subset\Omega$ with $0<R<1$, there exists a constant $C=C(N,s,p,q,\delta)>0$ such that
$$
[\nabla u]_{C^\alpha(B_{R/8}(x_0))}\leq \frac{C}{R^{1+\alpha}}\,\left(\|u\|_{L^\infty(B_{R}(x_0))}+\mathrm{Tail}_{p-1,s\,p,s\,p}(u;x_0,R)\right),
$$
where $\mathrm{Tail}_{p-1,s\,p,s\,p}(u;x_0,R)$ is defined in \eqref{mtail}.\end{Corollary}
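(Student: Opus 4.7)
The plan is to bootstrap from Theorem \ref{teo:1} using the classical interior $C^{1,\alpha}$ regularity for the $p$-Laplace equation with bounded right-hand side (DiBenedetto, Tolksdorf, Lieberman). The key observation is that under the assumption $sp<(p-1)$ the exponent $sp/(p-1)$ is strictly less than one, so Theorem \ref{teo:1} already provides enough H\"older regularity of $u$ to make the nonlocal term a bounded perturbation.

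First, by Theorem \ref{teo:1}, the solution $u$ belongs to $C^\delta_{\rm loc}(\Omega)$ for every $\delta\in(0,1)$, with the quantitative bound \eqref{apriori}. I would fix a $\delta$ with $\frac{sp}{p-1}<\delta<1$, which is possible precisely because $sp<p-1$. Next I would show that $(-\Delta_p)^s u\in L^\infty(B_{R/4}(x_0))$ with an explicit bound. For $x\in B_{R/4}(x_0)$, split the defining integral over $B_{R/2}(x)$ and its complement. On the near region, the integrand is pointwise dominated by $[u]_{C^\delta(B_R(x_0))}^{p-1}\,|x-y|^{\delta(p-1)-N-sp}$, which is integrable near $y=x$ exactly because $\delta(p-1)>sp$. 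On the far region, use $|u(x)-u(y)|\le \|u\|_{L^\infty(B_R(x_0))}+|u(y)|$ to dominate the contribution by $R^{-sp}\|u\|_{L^\infty(B_R(x_0))}^{p-1}$ plus a term controlled by $\mathrm{Tail}_{p-1,sp,sp}(u;x_0,R)^{p-1}$. Combined with the estimate $[u]_{C^\delta(B_R(x_0))}\le C R^{-\delta}\bigl(\|u\|_{L^\infty(B_R(x_0))}+\mathrm{Tail}_{p-1,sp,sp}(u;x_0,R)\bigr)$ from \eqref{apriori}, this gives
\[
\|(-\Delta_p)^s u\|_{L^\infty(B_{R/4}(x_0))}\le C R^{-sp}\bigl(\|u\|_{L^\infty(B_R(x_0))}+\mathrm{Tail}_{p-1,sp,sp}(u;x_0,R)\bigr)^{p-1}.
\]

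I would then rewrite the equation on $B_{R/4}(x_0)$ as $-\Delta_p u=g$ with $g:=-A(-\Delta_p)^s u\in L^\infty(B_{R/4}(x_0))$, and apply the classical interior $C^{1,\alpha}$ estimate for the $p$-Laplace equation with bounded right-hand side. The standard rescaling $v(y)=u(x_0+(R/4)y)/((R/4)c)$ with $c^{p-1}=(R/4)\|g\|_{L^\infty}$ reduces to $-\Delta_p v=\tilde g$ on $B_1$ with $\|\tilde g\|_{L^\infty}\le 1$, yielding some $\alpha\in(0,1)$ and
\[
[\nabla u]_{C^\alpha(B_{R/8}(x_0))}\le C R^{-1-\alpha}\Bigl(\|u\|_{L^\infty(B_{R/4}(x_0))}+\bigl(R^{p}\,\|g\|_{L^\infty(B_{R/4}(x_0))}\bigr)^{1/(p-1)}\Bigr).
\]

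Finally, I would substitute the bound on $\|g\|_{L^\infty}$ from the first step: the factor $R^{p}\cdot R^{-sp}$ raised to $1/(p-1)$ becomes $R^{(p-sp)/(p-1)}$, and the hypothesis $sp<p-1$ makes this exponent positive, so (using $R<1$) this factor is bounded by $1$ and can be absorbed into the constant. This produces the claimed estimate. The main obstacle, and the place where the hypothesis $sp<(p-1)$ is used sharply, is precisely the two-fold exponent check: it has to let us choose $\delta\in(sp/(p-1),1)$ so that the nonlocal integral converges to a bounded function, and simultaneously it has to keep the resulting power of $R$ in the final rescaling non-negative so that the estimate has the right homogeneity.
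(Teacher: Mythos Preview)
Your proposal is correct and takes essentially the same approach as the paper: bootstrap from Theorem \ref{teo:1}, bound $(-\Delta_p)^s u$ pointwise by splitting into near and far regions (using $\delta>sp/(p-1)$ for integrability near the diagonal), and then invoke the classical DiBenedetto $C^{1,\alpha}$ estimate for $-\Delta_p u=g$ with bounded $g$. One minor correction: the exponent $(p-sp)/(p-1)=p(1-s)/(p-1)$ is positive for every $s\in(0,1)$, so the hypothesis $sp<p-1$ is used only once---to permit the choice of $\delta\in(sp/(p-1),1)$---and not a second time for the final $R$-scaling.
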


\begin{Remark} The reason for which we have included a constant $A$ in the equation in the above results, is that in the proofs we will consider rescaled solutions. For these, a constant appears in front of the operator $(-\Delta_p)^s$.
\end{Remark}

\subsection{Comments on the results}
We first comment on the sharpness of our results, more specifically Theorem \ref{nonthm}. In general, the results are most likely not sharp. For instance, the results in \cite{DFM} give $C^{1,\alpha}$-regularity for solutions  for all $s\in (0,1)$ and all $p\in (1,\infty)$, under the additional assumption that $u\in W^{s,p}(\R^N)$.

However, our results are almost sharp when $(p-N/q)/(p-1)\leq \frac{sp}{p-1}\leq 1$. Indeed, assume $$(p-N/q)/(p-1)\leq \frac{sp}{p-1}\leq 1$$ and let 

$$u(x)=|x|^{\gamma+\e}, \quad \gamma = (p-N/q)/(p-1)$$
for some $\e>0$. Then 
$$
(-\Delta_p)^s u = f, \quad f(x)=C(s,p,\gamma,\e)|x|^{(\gamma+\e-s)(p-1)-s}
$$
with $f\in L^q_{\text{loc}(\R^N)}$ if and only if $\gamma +\e>(sp-N/q)/(p-1)$. Moreover, 
$$
-\Delta_p u = g, \quad g(x)=C(p,\gamma,\e)|x|^{(\gamma+\e-1)(p-1)-1}
$$
with $g\in L^q_{\text{loc}(\R^N)}$ if and only if $\gamma +\e>(p-N/q)/(p-1)$. It is clear that $u\not\in C^{\alpha}(B_1)$ for any $\alpha>\gamma+\e$. This shows that in this regime of parameters, the results of Theorem \ref{nonthm} are almost sharp.

Now we turn our attention to the H\"older exponents in Theorem \ref{teo:1} and Theorem \ref{nonthm}. Note that even in the case when $f$ is smooth Theorem \ref{nonthm} only gives almost H\"older regularity of order $\min\{sp/(p-1),1\}$, while we for $f=0$ reach almost Lipschitz regularity in Theorem \ref{teo:1}. The reason for this discrepancy is that we prove Theorem \ref{nonthm} by treating the inhomogeneous equation as a perturbation of the homogeneous one. The restriction of the exponent arises when we need a uniform control of the decay at infinity at different scales, see \eqref{eq:spdecay}. It may be possible to treat this as a perturbation of the homogeneous $p$-Laplace equation instead, but we were not able to control the decay at infinity in such an approach.

We also make a small comment regarding the assumption $sp<(p-1)$ in Corollary \ref{corc1}. This assumption arises as a condition for when $(-\Delta_p)^s u$ is bounded for almost Lipschitz functions $u$. The result is then obtained by treating $(-\Delta_p)^s u$ as a bounded term.

\subsection{Known results}
In the homogeneous setting $f=0$ and for $p=2$, equation \eqref{maineqn} reads
\begin{equation}\label{maineqnclas}
-\Delta u+(-\Delta)^s u=0.
\end{equation}
Based on the theory of probability and analysis, equation \eqref{maineqnclas} has been intensely studied in recent years. We mention the work of Foondun \cite{Fo}, where a Harnack inequality and local H\"older continuity are established. We also refer to the Chen-Kim-Song-Vondra\v{c}ek \cite{CKSV}, Chen-Kim-Song \cite{CKSheatest}, Chen-Kim-Song-Vondra\v{c}ek \cite{CKSVGreenest}, Athreya-Ramachandran \cite{AR} and the references therein for related results. For the parabolic problem associated with \eqref{maineqnclas}, Barlow-Bass-Chen-Kassmann \cite{BBCK},  Chen-Kumagai \cite{CK} proved a Harnack inequality and local H\"older continuity.
 
Recently, the regularity theory has also been developed by a purely analytic approach.
For the linear case $p=2$, existence, local boundedness, interior Sobolev regularity and a strong maximum principle, along with other qualitative properties of solutions have been established by Biagi-Dipierro-Valdinoci-Vecchi in \cite{BSVV2}. Local boundedness is also established in Dipierro-Proietti Lippi-Valdinoci \cite{DPV20}. For existence and nonexistence results, we refer to Abatangelo-Cozzi \cite{AC21}. We also refer to Biagi-Dipierro-Valdinoci-Vecchi \cite{BDVVfk21, BSVV1}, Dipierro-Ros-Oton-Serra-Valdinoci \cite{DRXJV20}, Dipierro-Proietti Lippi-Valdinoci \cite{DPV21}, Dipierro-Valdinoci \cite{DVap} and the references therein. 

In the nonlinear setting $p\neq 2$, for $f=0$, regularity results of weak solutions in terms of local boundedness, Harnack estimates, local H\"older continuity and semicontinuity results have been obtained in Garain-Kinnunen \cite{GK}. In \cite{BMV21}, Biagi-Mugnai-Vecchi established boundedness and strong maximum principle in the inhomogeneous case. In the case of a bounded function $f$, Biagi-Dipierro-Valdinoci-Vecchi \cite{En21} has obtained local H\"older continuity for globally bounded solutions and Garain-Ukhlov \cite{GU21} studied existence, uniqueness, local boundedness and further qualitative properties of solutions. Moreover, for more general inhomogeneites, local boundedness is proved in Salort-Vecchi \cite{SV21}. Very recently, H\"older and gradient regularity were proved by De Filippis-Mingione in \cite{DFM}, where a general type of mixed nonlinear problems are considered. Even a mix of different orders and different homogeneities of the operators is allowed. The results therein that applies to \eqref{maineqn} are proved under the global assumption that $u\in W^{s,p}(\R^N)$. Under this assumption, their results contain ours as a special case.

We also seize the opportunity to mention that very recently, the regularity theory for mixed parabolic equations has gained an increasing amount of attention. In the linear case, a weak Harnack inequality is proved for the parabolic analogue of equation \eqref{maineqnclas} in Garain-Kinnunen \cite{GKwh}. For the nonlinear case, see Fang-Shang-Zhang \cite{FSZ1, SZ1} and Garain-Kinnunen \cite{GKp21}. Among other things,  local boundedness and H\"older continuity have been established.
 
Finally, we wish to mention \cite{Domokos}, where a similar approach using difference quotients has been used to obtain improved regularity for quasilinear subelliptic equations in the Heisenberg group.
\subsection{Plan of the paper}

In Section \ref{sec:prel}, we introduce relevant notation and definitions and certain standard result in function spaces. In Section \ref{sec:ex}, we establish existence and uniqueness using standard methods from functional analysis. The core of the paper is mainly in Section \ref{sec:alip}, where we prove almost Lipschitz regularity for the homogeneous equation, using a Moser-type argument that results in an improved differentiability that can be iterated. Here we also prove Corollary \ref{corc1}. This is followed by Section \ref{sec:inhomo}, where the local boundedness and higher H\"older regularity for the inhomogeneous equation is established. This is based on approxmation with the homogenous equation. Finally, in the Appendix, we include a list of pointwise inequalities that are used throughout the paper.

\begin{ack}
 E. L. is supported by the Swedish Research Council, grant no. 2017-03736. Part of this material is based upon work supported by the Swedish Research Council under grant no. 2016-06596 while the second author were participating in the research program ``Geometric Aspects of Nonlinear Partial Differential Equations'', at Institut Mittag-Leffler in Djursholm, Sweden, during the fall of 2022.
 \end{ack}
 
\section{Preliminaries}\label{sec:prel}
In this section, we present some auxiliary results needed in the rest of the paper. Throughout the paper, we shall use the notation that follows. We denote by $B_r(x_0)$, the ball of radius $r$ centered at $x_0$. When $x_0=0$, we will simply write $B_r$. 
It will also be convenient to use the notation $u^+=\max\{u,0\}$. The monotone and $(p-1)$-homogeneous function
$$
J_p(a)=|a|^{p-2}a,\quad a\in\mathbb{R},$$
is expedient when treating equations of $p$-Laplacian type. Discrete differences play an important role. Therefore, for a measurable function $\psi:\mathbb{R}^N\to\mathbb{R}$ and a vector $h\in\mathbb{R}^N$, we define
\[
\psi_h(x)=\psi(x+h),\qquad \delta_h \psi(x)=\psi_h(x)-\psi(x),\qquad \delta^2_h \psi(x)=\delta_h(\delta_h \psi(x))=\psi_{2\,h}(x)+\psi(x)-2\,\psi_h(x).
\]
\subsection{Function spaces}
For  $p\in (1,\infty)$ and $u\in W^{1,p}(\Omega)$, the $W^{1,p}$-seminorm is defined by
$$
[u]^p_{W^{1,p}(\Omega)}:=\int_{\Omega}|\nabla u|^p\,dx.
$$
We also define the critical Sobolev exponent as
\begin{equation}
\label{eq:sobexp}
p^*=\left\{\begin{array}{rcll}
\dfrac{N\,p}{N-p},& \mbox{ if } p<N,\\
&\\
+\infty,& \mbox{ if } p>N,
\end{array}
\right.
\qquad\mbox{ and }\qquad (p^*)'=\left\{\begin{array}{rcll}
\dfrac{N\,p}{N\,p-N+p},& \mbox{ if } p<N,\\
&\\
1,& \mbox{ if } p>N.
\end{array}
\right.
\end{equation}
Moreover, for $0<\delta\leq 1$, we will employ the $\delta$-H\"older seminorm, given by
$$
[u]_{C^\delta(\Omega)}:=\sup_{x\neq y\in \Omega}\frac{|u(x)-u(y)|}{|x-y|^{\delta}}.
$$
For $1\le q<\infty$ and for $0<\beta<2$, we introduce the Besov-type space
\[
\mathcal{B}^{\beta,q}_\infty(\mathbb{R}^N)=\left\{\psi\in L^q(\mathbb{R}^N)\, :\, [\psi]_{\mathcal{B}^{\beta,q}_\infty(\mathbb{R}^N)}<+\infty\right\},	
\]
where 
\[
[\psi]_{\mathcal{B}^{\beta,q}_\infty(\mathbb{R}^N)}:=\sup_{|h|>0} \left\|\frac{\delta_h^2 \psi}{|h|^{\beta}}\right\|_{L^q(\mathbb{R}^N)}.
\]
Similarly, the {\it Sobolev-Slobodecki\u{\i} space} is defined by
\[
W^{\beta,q}(\mathbb{R}^N)=\left\{\psi\in L^q(\mathbb{R}^N)\, :\, [\psi]_{W^{\beta,q}(\mathbb{R}^N)}<+\infty\right\},\qquad 0<\beta<1,
\]
where the seminorm $[\,\cdot\,]_{W^{\beta,q}(\mathbb{R}^N)}$ reads
\[
[\psi]_{W^{\beta,q}(\mathbb{R}^N)}=\left(\iint_{\mathbb{R}^N\times \mathbb{R}^N} \frac{|\psi(x)-\psi(y)|^q}{|x-y|^{N+\beta\,q}}\,dx\,dy\right)^\frac{1}{q}.
\]
These spaces are endowed with their corresponding norms
\[
\|\psi\|_{\mathcal{B}^{\beta,q}_\infty(\mathbb{R}^N)}=\|\psi\|_{L^q(\mathbb{R}^N)}+[\psi]_{\mathcal{B}^{\beta,q}_\infty(\mathbb{R}^N)},
\]
and
\[
\|\psi\|_{W^{\beta,q}(\mathbb{R}^N)}=\|\psi\|_{L^q(\mathbb{R}^N)}+[\psi]_{W^{\beta,q}(\mathbb{R}^N)}.
\]
At times, we will also work with the space $W^{\beta,q}(\Omega)$ for a subset $\Omega\subset \mathbb{R}^N$,
\[
W^{\beta,q}(\Omega)=\left\{\psi\in L^q(\Omega)\, :\, [\psi]_{W^{\beta,q}(\Omega)}<+\infty\right\},\qquad 0<\beta<1,
\]
where we define
\[
 [\psi]_{W^{\beta,q}(\Omega)}=\left(\iint_{\Omega\times \Omega} \frac{|\psi(x)-\psi(y)|^q}{|x-y|^{N+\beta\,q}}\,dx\,dy\right)^\frac{1}{q}.
\]

\subsection{Tail spaces} 
In the study of nonlocal equations, the global behavior of solutions comes into play. This is entailed by the {\it tail space}
\[
L^{q}_{\alpha}(\mathbb{R}^N)=\left\{u\in L^{q}_{\rm loc}(\mathbb{R}^N)\, :\, \int_{\mathbb{R}^N} \frac{|u|^q}{1+|x|^{N+\alpha}}\,dx<+\infty\right\},\qquad q>0 \mbox{ and } \alpha>0,
\]
and measured by the quantity
\begin{equation}\label{mtail}
\mathrm{Tail}_{q,\alpha,\beta}(u;x_0,R)=\left[R^{\beta}\,\int_{\mathbb{R}^N\setminus B_R(x_0)} \frac{|u|^q}{|x-x_0|^{N+\alpha}}\,dx\right]^\frac{1}{q},
\end{equation}
defined for every $x_0\in\mathbb{R}^N$, $R>0,\,\beta>0$ and $u\in L^q_{\alpha}(\mathbb{R}^N)$. We observe that the quantity above is always finite, for a function $u\in L^q_{\alpha}(\mathbb{R}^N)$.

\subsection{Auxiliary results for functions spaces}
The next result asserts that the standard Sobolev space is continuously embedded in the fractional Sobolev space, see \cite[Proposition 2.2]{Hitchhiker'sguide}.
The argument uses \normalcolor the smoothness property of $\Omega$ so that we can extend functions from $W^{1,p}(\Omega)$ to $W^{1,p}(\R^N)$ and that the extension operator is bounded.

\begin{Lemma}\label{locnon}
Let $\Omega$ be a smooth bounded domain in $\mathbb{R}^N$, $1<p<\infty$ and $0<s<1$. 
There exists a positive constant $C=C(N,p,s,\Omega)$ \normalcolor such that
$
\|u\|_{W^{s,p}(\Omega)}\leq C\|u\|_{W^{1,p}(\Omega)}
$
for every $u\in W^{1,p}(\Omega)$.
\end{Lemma}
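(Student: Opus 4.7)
The plan is to reduce to a global estimate on $\mathbb{R}^N$ by using a bounded linear extension operator (which exists because $\Omega$ is smooth and bounded), and then to split the Gagliardo double integral into a short-range and a long-range piece. Concretely, let $E:W^{1,p}(\Omega)\to W^{1,p}(\mathbb{R}^N)$ be such an extension, and set $\tilde u=Eu$, so that $\|\tilde u\|_{W^{1,p}(\mathbb{R}^N)}\le C(\Omega)\,\|u\|_{W^{1,p}(\Omega)}$. Since $\tilde u\equiv u$ on $\Omega$, it suffices to bound
\[
[u]_{W^{s,p}(\Omega)}^{p}=\iint_{\Omega\times\Omega}\frac{|\tilde u(x)-\tilde u(y)|^{p}}{|x-y|^{N+sp}}\,dx\,dy
\]
in terms of $\|\tilde u\|_{W^{1,p}(\mathbb{R}^N)}$.

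Write the double integral as $I_{1}+I_{2}$, where $I_{1}$ is the part with $|x-y|\le 1$ and $I_{2}$ the part with $|x-y|>1$. For $I_{2}$, use the elementary inequality $|\tilde u(x)-\tilde u(y)|^{p}\le 2^{p-1}(|\tilde u(x)|^{p}+|\tilde u(y)|^{p})$, Fubini, and the fact that $\int_{|z|>1}|z|^{-N-sp}\,dz<\infty$ to obtain $I_{2}\le C(N,p,s)\,\|\tilde u\|_{L^{p}(\mathbb{R}^{N})}^{p}$.

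For $I_{1}$, the key step is the fundamental theorem of calculus applied to the extension: for a.e.\ $x,y$,
\[
|\tilde u(x)-\tilde u(y)|\le |x-y|\int_{0}^{1}|\nabla\tilde u(x+t(y-x))|\,dt,
\]
and Jensen then gives $|\tilde u(x)-\tilde u(y)|^{p}\le |x-y|^{p}\int_{0}^{1}|\nabla\tilde u(x+t(y-x))|^{p}\,dt$. After substituting $h=y-x$, applying Fubini, and translating by $-th$ in the $x$-variable, one gets
\[
I_{1}\le \|\nabla\tilde u\|_{L^{p}(\mathbb{R}^{N})}^{p}\int_{|h|\le 1}\frac{dh}{|h|^{N+sp-p}},
\]
and the last integral is finite precisely because $N+sp-p<N$, i.e.\ $s<1$. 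Combining with the bound on $I_{2}$ and the trivial estimate $\|u\|_{L^{p}(\Omega)}\le\|u\|_{W^{1,p}(\Omega)}$, and using the boundedness of the extension operator to replace $\|\tilde u\|_{W^{1,p}(\mathbb{R}^{N})}$ by $\|u\|_{W^{1,p}(\Omega)}$, yields the claimed inequality.

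The only genuine obstacle is that the chord $x+t(y-x)$ joining two points of $\Omega$ need not lie in $\Omega$, so one cannot apply the fundamental theorem of calculus directly to $u$; this is precisely what forces us to work with the extension $\tilde u$ and is where the smoothness hypothesis on $\Omega$ enters (through the boundedness of the extension operator). Everything else is a routine application of Fubini, translation-invariance of Lebesgue measure, and the two elementary integrability facts $\int_{|h|\le 1}|h|^{-N-sp+p}\,dh<\infty$ and $\int_{|h|>1}|h|^{-N-sp}\,dh<\infty$.
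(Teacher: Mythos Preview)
Your proof is correct and follows essentially the same approach the paper indicates: the paper does not give a detailed argument but refers to \cite[Proposition 2.2]{Hitchhiker'sguide} and explicitly notes that the proof proceeds by using the smoothness of $\Omega$ to obtain a bounded extension operator $W^{1,p}(\Omega)\to W^{1,p}(\mathbb{R}^N)$, which is precisely your reduction. The subsequent short-range/long-range splitting and use of the fundamental theorem of calculus is the standard way to bound $[\tilde u]_{W^{s,p}(\mathbb{R}^N)}$ by $\|\tilde u\|_{W^{1,p}(\mathbb{R}^N)}$, and your identification of the role of the smoothness hypothesis (chords may leave $\Omega$, forcing the extension) is exactly right.
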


The following result for the fractional Sobolev spaces with zero boundary value follows from \cite[Lemma 2.1]{Silvaarxiv}.
The main difference compared to Lemma \ref{locnon} is that the result holds for any bounded domain, 
since for the Sobolev spaces with zero boundary value, we may always extend by zero.

\begin{Lemma}\label{locnon1}
Let $\Omega$ be a bounded domain in $\mathbb{R}^N$, $1<p<\infty$ and $0<s<1$. Then there exists a positive constant $C=C(N,p,s,\Omega)$ such that
\[
\int_{\mathbb{R}^n}\int_{\mathbb{R}^n}\frac{|u(x)-u(y)|^p}{|x-y|^{N+ps}}\,dx\, dy
\leq C\int_{\Omega}|\nabla u(x)|^p\,dx
\]
for every $u\in W_0^{1,p}(\Omega)$.
Here we consider the zero extension of $u$ to the complement of $\Omega$.
\end{Lemma}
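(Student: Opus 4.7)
The statement is the fractional-Sobolev-type inequality
\[
[u]_{W^{s,p}(\mathbb{R}^N)}^p \le C\,[u]_{W^{1,p}(\Omega)}^p, \qquad u \in W_0^{1,p}(\Omega),
\]
where $u$ is understood to be extended by $0$ outside $\Omega$. The plan is to reduce it to Lemma \ref{locnon} on a smooth auxiliary set and then handle the tail part using that the zero extension makes long-range contributions decay.

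The strategy: fix a smooth bounded domain $\Omega' \Supset \Omega$ (for instance a large ball $B_R$ containing $\Omega$), and set $d := \mathrm{dist}(\Omega, \partial \Omega') > 0$. The zero extension of $u$ belongs to $W^{1,p}(\Omega')$ with $[\tilde u]_{W^{1,p}(\Omega')} = [u]_{W^{1,p}(\Omega)}$ and $\|\tilde u\|_{L^p(\Omega')} = \|u\|_{L^p(\Omega)}$. I would then split the double integral over $\mathbb{R}^N\times\mathbb{R}^N$ into
\[
\iint_{\Omega' \times \Omega'} + 2\iint_{\Omega' \times (\mathbb{R}^N\setminus\Omega')} + \iint_{(\mathbb{R}^N\setminus\Omega')\times (\mathbb{R}^N\setminus\Omega')}.
\]
The last piece vanishes because $u \equiv 0$ on $\mathbb{R}^N\setminus\Omega \supset \mathbb{R}^N\setminus\Omega'$.

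For the first piece, apply Lemma \ref{locnon} directly to $\tilde u \in W^{1,p}(\Omega')$ (this is where the smoothness of $\Omega'$ is used — not of $\Omega$), giving
\[
\iint_{\Omega'\times\Omega'}\frac{|u(x)-u(y)|^p}{|x-y|^{N+ps}}\,dx\,dy \le C\bigl([u]_{W^{1,p}(\Omega)}^p + \|u\|_{L^p(\Omega)}^p\bigr).
\]
For the cross term, $u(y) = 0$ when $y \notin \Omega'$ and $u(x)$ is supported in $\Omega$, so $|x-y| \ge d$; hence a radial computation gives
\[
\int_{\Omega}|u(x)|^p \int_{\mathbb{R}^N\setminus\Omega'}\frac{dy}{|x-y|^{N+ps}}\,dx \le C(N,s,p,d)\,\|u\|_{L^p(\Omega)}^p.
\]
Collecting the three estimates yields $[u]_{W^{s,p}(\mathbb{R}^N)}^p \le C\,\|u\|_{W^{1,p}(\Omega)}^p$. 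The remaining step is to absorb the $L^p$ term by the gradient: because $u \in W_0^{1,p}(\Omega)$ with $\Omega$ bounded, the Poincaré inequality gives $\|u\|_{L^p(\Omega)} \le C(N,p,\Omega)\,[u]_{W^{1,p}(\Omega)}$, which upgrades the bound to $[u]_{W^{s,p}(\mathbb{R}^N)}^p \le C\,[u]_{W^{1,p}(\Omega)}^p$.

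No step is genuinely difficult here; the only subtlety is to notice that Lemma \ref{locnon} cannot be applied to the possibly non-smooth $\Omega$ directly, which is precisely why one interpolates through a smooth enveloping set $\Omega'$ and exploits the vanishing of $u$ outside $\Omega$ both to kill the far-field part and to invoke Poincaré.
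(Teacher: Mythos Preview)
Your proof is correct. The paper does not actually prove this lemma; it only cites \cite[Lemma 2.1]{Silvaarxiv} and remarks that the crucial point distinguishing it from Lemma~\ref{locnon} is that for $W_0^{1,p}(\Omega)$ one may always extend by zero, regardless of the regularity of $\partial\Omega$. Your argument makes this remark precise: enclose $\Omega$ in a smooth $\Omega'$, apply Lemma~\ref{locnon} on $\Omega'$ to the zero extension, bound the cross term using $|x-y|\ge d=\mathrm{dist}(\Omega,\partial\Omega')$, and finish with Poincar\'e to absorb the $L^p$ norm into the gradient term --- a clean, self-contained route fully in the spirit of the paper's comment.
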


 The following result is a local version of \cite[Lemma 2.3]{BL}.
\begin{Lemma}\label{2.3} Let $\beta\in (0,1)$, $p\in (1,\infty)$, $x_0\in\mathbb{R}^N$, $R>0$ and $h_1>0$. Suppose 
\begin{equation}\label{uas1}
\begin{split}
u\in L^p(B_{R+\frac{7h_1}{2}}(x_0))\quad\text{and}\quad \sup_{0<|h|<h_1}\left\|\frac{\delta_h^{2}u}{|h|^\beta}\right\|_{L^p(B_{R+\frac{5h_1}{2}}(x_0))}<\infty.
\end{split}
\end{equation}
Then
\begin{equation}\label{2.3eqn}
\begin{split}
\sup_{0<|h|<h_1}\left\|\frac{\delta_h u}{|h|^{\beta}}\right\|_{L^p(B_R(x_0))}&\leq\frac{C}{1-\beta}\left\{\sup_{0<|h|<h_1}\left\|\frac{\delta_h^{2}u}{|h|^{\beta}}\right\|_{L^p(B_{R+\frac{5h_1}{2}}(x_0))}+(h_1^{-\beta}+1)\|u\|_{L^p(B_{R+\frac{7h_1}{2}}(x_0))}\right\}.
\end{split}
\end{equation}
Here $C=C(N,p)>0$.
\end{Lemma}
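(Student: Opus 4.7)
The strategy is the classical telescoping reduction from second-order to first-order differences, now localized to balls. The key algebraic identity is
\[
2\,\delta_h u = \delta_{2h}u - \delta_h^2 u,
\]
which rearranges the definition of $\delta_h^2 u$. Iterating it $j$ times expresses $\delta_h u$ as a combination of a single ``tail'' first difference $2^{-j}\delta_{2^j h} u$ and a weighted sum of second differences $2^{-(k+1)}\delta_{2^k h}^2 u$ for $0\le k\le j-1$, with geometrically decreasing weights.

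The plan is then to fix $h$ with $0<|h|<h_1$, apply this expansion with $j=j(h)$ chosen as the smallest integer such that $2^j|h|\ge h_1$, and take $L^p(B_R(x_0))$ norms. With this choice, every step $2^k|h|$ for $k\le j-1$ lies in $(0,h_1)$, so each second-difference norm can be uniformly majorized by the supremum appearing in \eqref{uas1} on the larger ball $B_{R+5h_1/2}(x_0)$. The relevant ball inclusions are automatic, since the translated arguments of $u$ in $\delta_{2^k h}^2 u$ stay within $B_{R+2\cdot 2^k|h|}\subseteq B_{R+2h_1}(x_0)$. The tail term $2^{-j}\|\delta_{2^j h} u\|_{L^p(B_R(x_0))}$ is handled by the triangle inequality, producing a constant multiple of $\|u\|_{L^p(B_{R+7h_1/2}(x_0))}$.

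Dividing by $|h|^\beta$ converts the coefficients on the second-difference terms into a geometric series with ratio $2^{\beta-1}$, whose sum is $O(1/(1-2^{\beta-1}))=O(1/(1-\beta))$ as $\beta\uparrow 1$; this is precisely what produces the prefactor $1/(1-\beta)$ in \eqref{2.3eqn}. Using $2^j|h|\ge h_1$ and $|h|\le h_1$, the tail term contributes a factor of order $h_1^{-\beta}$, which is absorbed into the $(h_1^{-\beta}+1)$ factor. Taking the supremum over $|h|\in(0,h_1)$ then yields \eqref{2.3eqn}.

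The argument is essentially algebraic; the one point requiring care is purely bookkeeping, namely verifying that for every step size $2^k|h|$ appearing in the iteration the translated arguments of $u$ remain inside $B_{R+5h_1/2}(x_0)$ for the second differences and inside $B_{R+7h_1/2}(x_0)$ for the tail. This constraint is what simultaneously dictates the choice of $j$ and the specific geometric constants $5h_1/2$ and $7h_1/2$ appearing in the statement.
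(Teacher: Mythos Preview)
Your approach is correct and genuinely different from the paper's. The paper proves this lemma by multiplying $u$ by a smooth cut-off $\eta\in C_c^\infty(B_{R+h_1/2})$, applying the already-known global version \cite[Lemma~2.3]{BL} to $u\eta\in\mathcal{B}^{\beta,p}_\infty(\mathbb{R}^N)$, and then using the discrete Leibniz rule
\[
\delta_h^2(u\eta)=\eta_{2h}\,\delta_h^2 u+2\,\delta_h u\,\delta_h(\eta_h)+u\,\delta_h^2\eta
\]
together with the bounds $\|\delta_h(\eta_h)\|_\infty\le C|h|/h_1$ and $\|\delta_h^2\eta\|_\infty\le C|h|^2/h_1^2$ to localize the resulting estimate. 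Your route via the dyadic telescoping identity $2\delta_h u=\delta_{2h}u-\delta_h^2 u$ is instead a direct, self-contained argument: it does not invoke the global Besov result at all, and the factor $1/(1-\beta)$ emerges transparently from the geometric series $\sum_k 2^{k(\beta-1)}$. The paper's proof is shorter on the page because it outsources the work to \cite{BL}, while yours is more elementary and in fact recovers the mechanism behind that cited lemma. A minor remark: your bookkeeping is actually a bit looser than needed---the second differences only require $u$ on $B_{R+2h_1}$ and the tail only on $B_{R+2h_1}$ as well, so the radii $5h_1/2$ and $7h_1/2$ in the statement give you extra room rather than being forced by your argument.
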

\begin{proof}
Without loss of generality, we assume that $x_0=0$. Let $0<|h|<h_1$. Let $\eta\in C_c^{\infty}(B_{R+\frac{h_1}{2}})$ be such that $0\leq\eta\leq 1$,\,$|\nabla \eta|\leq\frac{C}{h_1}$,\,$\|D^2 \eta\|\leq\frac{C}{h_1^{2}}$ in $B_{R+\frac{h_1}{2}}$ for some constant $C=C(N,p)>0$ and $\eta\equiv 1$ in $B_R$. Then 
\begin{equation}\label{eta1}
\begin{split}
\|\eta_{2h}\|_{L^\infty(B_{\hat{R}+\frac{5h_1}{2}})}&\leq 1,\quad\|\delta_h (\eta_h)\|_{L^\infty(B_{\hat{R}+\frac{5h_1}{2}})}\leq\frac{C|h|}{h_1},\quad\|\delta_h^{2}\eta\|_{L^\infty(B_{\hat{R}+\frac{5h_1}{2}})}\leq \frac{C|h|^2}{h_1^{2}},
\end{split}
\end{equation}
for some constant $C=C(N,p)>0$. Note that the functions $\eta_{2h},\,\delta_h\eta_h$ and $\delta_h ^2 \eta$ have support inside $B_{R+\frac{5h_1}{2}}$.
Moreover, we obtain
\begin{equation}\label{eta2}
\delta_h^{2}(u\eta)=\eta_{2h}\delta_h^{2}u+2\delta_h u\,\delta_h(\eta_h)+u\delta_h^{2}\eta.
\end{equation}
By the hypothesis \eqref{uas1} and $\eta\in C_c^{\infty}(B_{R+\frac{h_1}{2}})$, it follows that $u\eta\in \mathcal{B}_{\infty}^{\beta,p}(\mathbb{R}^N)$. Then by \cite[Lemma 2.3]{BL}, we have
\begin{equation}\label{BLap1}
\begin{split}
\sup_{0<|h|<h_1}\left\|\frac{\delta_h\,(u\eta)}{|h|^{\beta}}\right\|_{L^p(\mathbb{R}^N)}&\leq \frac{C}{1-\beta}\Big\{\sup_{0<|h|<h_1}\left\|\frac{\delta_h^{2}\,(u\eta)}{|h|^{\beta}}\right\|_{L^p(\mathbb{R}^N)}+(h_1^{-\beta}+1)\|u\eta\|_{L^p(\mathbb{R}^N)}\Big\}.
\end{split}
\end{equation}
Using the above properties of $\eta$, \eqref{eta1}--\eqref{BLap1} and the fact that $0<\beta<1$, we have\normalcolor
\[
\begin{split}
\sup_{0<|h|<h_1}\left\|\frac{\delta_h(u\eta)}{|h|^{\beta}}\right\|_{L^p(B_R)}&\leq \sup_{0<|h|<h_1}\left\|\frac{\delta_h(u\eta)}{|h|^{\beta}}\right\|_{L^p(\mathbb{R}^N)}\\
&\leq\frac{C}{1-\beta}\Big\{\sup_{0<|h|<h_1}\left\|\frac{\delta_h^{2}\,(u\eta)}{|h|^{\beta}}\right\|_{L^p(\mathbb{R}^N)}+(h_1^{-\beta}+1)\|u\eta\|_{L^p(\mathbb{R}^N)}\Big\}\\
&\leq \frac{C}{1-\beta}\left\{\left\|\frac{\delta_h^{2}(u\eta)}{|h|^{\beta}}\right\|_{L^p(\mathbb{R}^N)}+(h_1^{-\beta}+1)\|u\|_{L^p(B_{R+\frac{h_0}{2}})}\right\}\\
&\leq\frac{C}{1-\beta}\Bigg\{\sup_{0<|h|<h_1}\left\|\frac{\delta_h^{2}u}{|h|^{\beta}}\right\|_{L^p(B_{R+\frac{5h_1}{2}})}+\sup_{0<|h|<h_1}\left\|\frac{|h|^{1-\beta}}{h_1}\delta_h u\right\|_{L^p(B_{R+\frac{5h_1}{2}})}\\
&\quad+\sup_{0<|h|<h_1}\left\|\frac{|h|^{2-\beta}}{h_1^{2}}u\right\|_{L^p(B_{R+\frac{5h_1}{2}})}+(h_1^{-\beta}+1)\|u\|_{L^p(B_{R+\frac{h_1}{2}})}\Bigg\}\\
&\leq\frac{C}{1-\beta}\left\{\sup_{0<|h|<h_1}\left\|\frac{\delta_h^{2}u}{|h|^{\beta}}\right\|_{L^p(B_{R+\frac{5h_1}{2}})}+(h_1^{-\beta}+1)\|u\|_{L^p(B_{R+\frac{7h_1}{2}})}\right\},
\end{split}
\]
for some $C=C(N,p)$. This proves the result.
\end{proof}
Our next result is a local version of \cite[Proposition 2.4]{BL}.
\begin{Lemma}\label{2.4}
Let $\alpha\in (1,2)$, $p\in (1,\infty)$, $R>0$, $x_0\in\mathbb{R}^N$ and $h_1>0$.  Suppose 
\begin{equation}\label{uas2}
\begin{split}
u\in L^p(B_{R+6h_1}(x_0))\quad\text{and}\quad \sup_{0<|h|<h_1}\left\|\frac{\delta_h^{2}u}{|h|^\alpha}\right\|_{L^p(B_{R+5h_1}(x_0))}<\infty.
\end{split}
\end{equation}
Then
\begin{equation}\label{alpha}
\begin{split}
\|\nabla u\|_{L^p(B_R(x_0))}&\leq C\Bigg\{\Big(1+\frac{h_1^{-\alpha}+h_1^{-1}}{(\alpha-1)(2-\alpha)}+\frac{h_1^{-\alpha}}{\alpha-1}\Big)\|u\|_{L^p(B_{R+6h_1}(x_0))}\\
&\quad+\frac{3-\alpha}{(\alpha-1)(2-\alpha)}\sup_{0<|h|<h_1}\left\|\frac{\delta_h^{2}u}{|h|^{\alpha}}\right\|_{L^p(B_{R+5h_1}(x_0))}\Bigg\}
\end{split}
\end{equation}
where $C=C(N,p)>0$.
\end{Lemma}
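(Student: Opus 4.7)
The plan is to mimic closely the proof of Lemma \ref{2.3}, but applying the global statement \cite[Proposition 2.4]{BL} (rather than \cite[Lemma 2.3]{BL}) to a localized function $u\eta$. Without loss of generality assume $x_0=0$. Choose a cutoff $\eta\in C_c^{\infty}(B_{R+h_1})$ with $0\le \eta\le 1$, $\eta\equiv 1$ on $B_R$, $|\nabla\eta|\le C/h_1$ and $\|D^2\eta\|\le C/h_1^{\,2}$, so that the shifts $\eta_{2h}$, $\delta_h\eta_h$ and $\delta_h^2\eta$ are all supported in $B_{R+5h_1}$ for $|h|<h_1$ (with the usual estimates $\|\eta_{2h}\|_\infty\le 1$, $\|\delta_h\eta_h\|_\infty\le C|h|/h_1$, $\|\delta_h^2\eta\|_\infty\le C|h|^2/h_1^{\,2}$). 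Since $\eta\equiv 1$ on $B_R$, we have $\nabla(u\eta)=\nabla u$ on $B_R$, so it suffices to bound $\|\nabla(u\eta)\|_{L^p(\mathbb{R}^N)}$.

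Applying \cite[Proposition 2.4]{BL} to $u\eta\in L^p(\mathbb{R}^N)$ yields
\[
\|\nabla(u\eta)\|_{L^p(\mathbb{R}^N)}\le C\Big\{\Big(1+\tfrac{h_1^{-\alpha}+h_1^{-1}}{(\alpha-1)(2-\alpha)}+\tfrac{h_1^{-\alpha}}{\alpha-1}\Big)\|u\eta\|_{L^p(\mathbb{R}^N)}+\tfrac{3-\alpha}{(\alpha-1)(2-\alpha)}\sup_{0<|h|<h_1}\Big\|\tfrac{\delta_h^2(u\eta)}{|h|^\alpha}\Big\|_{L^p(\mathbb{R}^N)}\Big\},
\]
(I need to check that \cite[Proposition 2.4]{BL} is stated in exactly this quantitative form or produce the corresponding constants; this is a bookkeeping step.) Using the Leibniz-type identity
\[
\delta_h^2(u\eta)=\eta_{2h}\,\delta_h^2 u+2\,\delta_h u\cdot \delta_h(\eta_h)+u\,\delta_h^2\eta,
\]
together with the pointwise bounds on $\eta_{2h}$, $\delta_h\eta_h$ and $\delta_h^2\eta$, I get
\[
\Big\|\tfrac{\delta_h^2(u\eta)}{|h|^\alpha}\Big\|_{L^p(\mathbb{R}^N)}\le \Big\|\tfrac{\delta_h^2 u}{|h|^\alpha}\Big\|_{L^p(B_{R+5h_1})}+\tfrac{C\,|h|^{1-\alpha}}{h_1}\|\delta_h u\|_{L^p(B_{R+5h_1})}+\tfrac{C\,|h|^{2-\alpha}}{h_1^{\,2}}\|u\|_{L^p(B_{R+5h_1})}.
\]
The first and third terms are immediately under control by the hypothesis and by $|h|^{2-\alpha}\le h_1^{2-\alpha}$.

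The non-trivial term is the $\delta_h u$ piece, because in the regime $\alpha\in(1,2)$ the factor $|h|^{1-\alpha}$ grows as $|h|\to 0$. The key observation is that $\|\delta_h u\|_{L^p}\le |h|\,\|\nabla u\|_{L^p}$ cannot be used circularly; instead I will apply Lemma \ref{2.3} with exponent $\beta=\alpha-1\in(0,1)$ to estimate
\[
\sup_{0<|h|<h_1}\Big\|\tfrac{\delta_h u}{|h|^{\alpha-1}}\Big\|_{L^p(B_{R+5h_1})}\le \tfrac{C}{2-\alpha}\Big\{\sup_{0<|h|<h_1}\Big\|\tfrac{\delta_h^2 u}{|h|^{\alpha-1}}\Big\|_{L^p(B_{R+\frac{15 h_1}{2}})}+(h_1^{1-\alpha}+1)\|u\|_{L^p(B_{R+\frac{17 h_1}{2}})}\Big\}.
\]
Since $|h|^{\alpha-1}\le h_1^{\alpha-1}|h|^{\alpha}/|h|^{\alpha-1}$ is not the direction I want, I instead note $|h|^{\alpha-1}\le h_1^{\alpha-1}$ when it is used as a denominator (for the $\delta_h^2 u$ factor); more cleanly, I replace $|h|^{\alpha-1}$ by $|h|^\alpha$ at the cost of a factor $h_1^{-1}$, which is the source of the $h_1^{-\alpha}$ power in the final constant. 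Combining this with the $|h|^{1-\alpha}/h_1$ factor produces a bounded contribution of the correct form. Then substituting back gives \eqref{alpha} with the announced constants, provided I absorb the slightly larger balls $B_{R+\frac{17 h_1}{2}}\subset B_{R+6h_1}$ (here I may need to shrink $h_1$ by a harmless universal factor at the start, or work with $\eta$ supported in a slightly larger set; this is the kind of minor adjustment that has to be checked).

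The main obstacle is therefore not conceptual but notational: carefully tracking the constants $(\alpha-1)$, $(2-\alpha)$ and $(3-\alpha)$ that arise from applying \cite[Proposition 2.4]{BL} and Lemma \ref{2.3} in succession, and verifying that the nested radii fit inside $B_{R+6h_1}$. Once the bookkeeping is done, the result follows by collecting terms.
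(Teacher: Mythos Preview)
Your approach is essentially the same as the paper's: localize with a cutoff, apply \cite[Proposition 2.4]{BL} to $u\eta$, expand $\delta_h^2(u\eta)$ via the discrete Leibniz rule, and control the cross term $\delta_h u\,\delta_h(\eta_h)$ by invoking Lemma \ref{2.3} with $\beta=\alpha-1$. Two points of bookkeeping, however, need to be corrected rather than deferred.

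First, the form in which you quote \cite[Proposition 2.4]{BL} is not what that proposition says. It gives only
\[
\|\nabla(u\eta)\|_{L^p(\mathbb{R}^N)}\le C\,\|u\eta\|_{L^p(\mathbb{R}^N)}+\frac{C}{\alpha-1}\,\sup_{|h|>0}\Big\|\frac{\delta_h^{2}(u\eta)}{|h|^{\alpha}}\Big\|_{L^p(\mathbb{R}^N)},
\]
and the explicit constants $(2-\alpha)^{-1}$, $(3-\alpha)$, $h_1^{-\alpha}$, $h_1^{-1}$ in \eqref{alpha} only emerge after you feed the Leibniz expansion and the output of Lemma \ref{2.3} back into this inequality. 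So you cannot simply cite the final constants from \cite{BL}.

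Second, and more importantly, your proposed fix for the radii goes in the wrong direction. With $\eta$ supported in $B_{R+h_1}$, the functions $\eta_{2h},\delta_h\eta_h,\delta_h^2\eta$ live in $B_{R+5h_1}$ (actually $B_{R+3h_1}$, but certainly not smaller than $B_{R+5h_1/2}$), and then Lemma \ref{2.3} applied on that ball forces you out to $B_{R+17h_1/2}$, which is \emph{not} contained in $B_{R+6h_1}$. Enlarging the support of $\eta$ makes this worse, not better, and ``shrinking $h_1$'' is not an option since $h_1$ is given in the statement. The correct move is to take $\eta\in C_c^\infty(B_{R+h_1/2})$ (as in the proof of Lemma \ref{2.3}); then the discrete derivatives of $\eta$ are supported in $B_{R+5h_1/2}$, and applying Lemma \ref{2.3} with $\beta=\alpha-1$ on that ball lands exactly in $B_{R+5h_1}$ and $B_{R+6h_1}$, matching the hypothesis \eqref{uas2}. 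With this choice the argument closes and yields \eqref{alpha}.
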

\begin{proof}
Without loss of generality, we assume that $x_0=0$. Let $\eta\in C_c^{\infty}(B_{R+\frac{h_1}{2}})$ be as defined in \eqref{eta1}. Using the assumption \eqref{uas2} and $\eta\in C_c^{\infty}(B_{R+\frac{h_1}{2}})$, we have $u\eta\in \mathcal{B}_{\infty}^{\alpha,p}(\mathbb{R}^N)$. Therefore, by \cite[Propsotion 2.4]{BL}, we get
\begin{equation}\label{BLap2}
\begin{split}
\|\nabla(u\eta)\|_{L^p(\mathbb{R}^N)}&\leq C\|u\eta\|_{L^p(\mathbb{R}^N)}+\frac{C}{\alpha-1}\sup_{|h|>0}\left\|\frac{\delta_h^{2}(u\eta)}{|h|^{\alpha}}\right\|_{L^p(\mathbb{R}^N)},
\end{split}
\end{equation}
for some $C=C(N,p)>0$. Next, using the properties of $\eta$ from \eqref{eta1} and \eqref{eta2}, we observe that
\begin{equation}\label{ob1}
\begin{split}
\sup_{|h|>0}\left\|\frac{\delta_h^{2}(u\eta)}{|h|^{\alpha}}\right\|_{L^p(\mathbb{R}^N)}&=\sup_{|h|>0}\left\|\frac{\eta_{2h}\delta_h^{2}u+2\delta_h u\delta_h(\eta_h)+u\delta_h^{2}\eta}{|h|^{\alpha}}\right\|_{L^p(\mathbb{R}^N)}\\
&\leq C\sup_{0<|h|<h_1}\Bigg\{\left\|\eta_{2h}\frac{\delta_h^{2}u}{|h|^{\alpha}}\right\|_{L^p(B_{R+\frac{5h_1}{2}})}+\left\|\delta_{h}(\eta_h)\frac{\delta_h\,u}{|h|^{\alpha}}\right\|_{L^p(B_{R+\frac{5h_1}{2}})}\\
&\quad+\left\|\delta_{h}^2(\eta)\frac{u}{|h|^{\alpha}}\right\|_{L^p(B_{R+\frac{5h_1}{2}})}\Bigg\}\\
&\leq C\sup_{0<|h|<h_1}\Bigg\{
\left\|\frac{\delta_h^{2}u}{|h|^{\alpha}}\right\|_{L^p(B_{R+\frac{5h_1}{2}})}+\frac{1}{h_1}\left\|\frac{\delta_h\,u}{|h|^{\alpha-1}}\right\|_{L^p(B_{R+\frac{5h_1}{2}})}+h_1^{-\alpha}\|u\|_{L^p(B_{R+\frac{5h_1}{2}})}\Bigg\}, 
\end{split}
\end{equation}
for some positive constant $C=C(N,p)>0$. Now we estimate the second integral in the RHS of \eqref{ob1}. To this end, using \eqref{uas2}, we get
\begin{equation}\label{uas3}
\begin{split}
u\in L^p(B_{R+6h_1})\quad\text{and}\quad\sup_{0<|h|<h_1}\left\|\frac{\delta_h^{2}\,u}{|h|^{\alpha-1}}\right\|_{L^p(B_{R+5h_1})}<\infty.
\end{split}
\end{equation}
Since $0<\alpha-1<1$, by Lemma \ref{2.3}, it follows that
\begin{equation}\label{2.3ap}
\begin{split}
\sup_{0<|h|<h_1}\left\|\frac{\delta_h\,u}{|h|^{\alpha-1}}\right\|_{L^p(B_{R+\frac{5h_1}{2}})}&\leq \frac{C}{2-\alpha}\Bigg\{\sup_{0<|h|<h_1}\left\|\frac{\delta_h^{2}\,u}{|h|^{\alpha-1}}\right\|_{L^p(B_{R+5h_1})}+(h_1^{-\alpha-1}+1)\|u\|_{L^p(B_{R+6h_1})}\Bigg\}\\
&\leq\frac{C}{2-\alpha}\Bigg\{h_1\sup_{0<|h|<h_1}\left\|\frac{\delta_h^{2}\,u}{|h|^{\alpha}}\right\|_{L^p(B_{R+5h_1})}+(h_1^{-\alpha-1}+1)\|u\|_{L^p(B_{R+6h_1})}\Bigg\},
\end{split}
\end{equation}
for some $C=C(N,p)$. Combining the estimates \eqref{2.3ap} and \eqref{ob1} in \eqref{BLap2} and noting that $\eta\equiv 1$ in $B_{R}$, the result follows.
\end{proof}
\begin{Lemma} \label{lem:discdiffW1p} Suppose $u\in W^{1,p}(\R^N)$, where $p\in (1,\infty)$. Then
$$
\sup_{|h|>0}\Big\|\frac{\delta_h u}{h}\Big\|_{L^p(\R^N)}\leq \|\nabla u\|_{L^p(\R^N)}.
$$
\end{Lemma}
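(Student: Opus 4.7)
The plan is to reduce to smooth compactly supported functions by density, then use the fundamental theorem of calculus along the segment joining $x$ and $x+h$, and finally apply Minkowski's integral inequality together with translation invariance of the Lebesgue measure.

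More precisely, first I would fix $h \in \mathbb{R}^N \setminus \{0\}$ and assume that $u \in C_c^\infty(\mathbb{R}^N)$. For such $u$ one has the pointwise identity
\[
\delta_h u(x) = u(x+h) - u(x) = \int_0^1 \nabla u(x+t\,h) \cdot h \, dt,
\]
from which the pointwise bound $|\delta_h u(x)|/|h| \leq \int_0^1 |\nabla u(x+t\,h)| \, dt$ follows by Cauchy--Schwarz.

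Next, I would apply Minkowski's inequality for integrals (i.e., interchanging the $L^p$-norm in $x$ with the integral in $t$) to get
\[
\left\|\frac{\delta_h u}{|h|}\right\|_{L^p(\mathbb{R}^N)} \leq \int_0^1 \|\nabla u(\cdot + t\,h)\|_{L^p(\mathbb{R}^N)} \, dt = \|\nabla u\|_{L^p(\mathbb{R}^N)},
\]
where the last equality uses the translation invariance of the Lebesgue measure. Taking the supremum over $h \neq 0$ gives the conclusion for $u \in C_c^\infty(\mathbb{R}^N)$.

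Finally, the extension to a general $u \in W^{1,p}(\mathbb{R}^N)$ follows from the density of $C_c^\infty(\mathbb{R}^N)$ in $W^{1,p}(\mathbb{R}^N)$: for a sequence $u_n \in C_c^\infty(\mathbb{R}^N)$ converging to $u$ in $W^{1,p}(\mathbb{R}^N)$, the corresponding difference quotients $\delta_h u_n/|h|$ converge to $\delta_h u/|h|$ in $L^p(\mathbb{R}^N)$ for each fixed $h$, while $\|\nabla u_n\|_{L^p(\mathbb{R}^N)} \to \|\nabla u\|_{L^p(\mathbb{R}^N)}$. Passing to the limit in the inequality above and then taking the supremum over $h \neq 0$ yields the claim. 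There is no real obstacle here; the only care needed is the order of operations (the supremum over $h$ is taken after passing to the limit, which is fine since the bound is uniform in $h$).
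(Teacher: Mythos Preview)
Your proof is correct and follows essentially the same route as the paper: the fundamental theorem of calculus along the segment $[x,x+h]$ combined with translation invariance of Lebesgue measure. The only cosmetic differences are that the paper applies H\"older's (Jensen's) inequality in $t$ and then Fubini, whereas you invoke Minkowski's integral inequality directly, and that you add a density reduction to $C_c^\infty(\mathbb{R}^N)$ which the paper omits (the identity being used there directly for $W^{1,p}$ functions via absolute continuity on lines).
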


\begin{proof} We have
$$
u(x+h)-u(x) = \int_0^1\nabla u (x+th)\cdot h  dt.
$$
Therefore, by H\"older's inequality
$$
\Big|\frac{u(x+h)-u(x)}{|h|}\Big|^p\leq \int_0^1|\nabla u (x+th)|^p  dt.
$$
Upon integrating, we obtain 
\[
\begin{split}
\int_{\R^N}\Big|\frac{u(x+h)-u(x)}{|h|}\Big|^p dx&\leq \int_{\R^N}\int_0^1|\nabla u (x+th)|^p  dt dx \\
&\leq \int_0^1\int_{\R^N}|\nabla u (x+th)|^p  dx dt\\
& \leq \|\nabla u\|_{L^p(\R^N)}^p.
\end{split}
\]
\end{proof}
We seize the opportunity to mention that a local version of the above lemma can be found in Theorem 3 on page 277 in \cite{Evans}.
\subsection{Weak solutions}
Below, we define weak solutions of \eqref{maineqn}, allowing also for a factor $A$ that will be needed in the sequel, when treating rescaled solutions.
\begin{Definition}\label{subsupsolution}
Let $1<p<\infty,\,0<s<1$ and $A\geq 0$. Suppose $f\in L^q(\Omega)$, with
\[
q\ge (p^*)'\quad \mbox{ if } p\not =N\qquad \mbox{ or }\qquad q>1\quad \mbox{ if }p=N.
\] We say that $u\in W_{\loc}^{1,p}(\Omega)\cap L^{p-1}_{sp}(\mathbb{R}^N)$ is a weak subsolution (or supersolution) of 
$$
-\Delta_p u+A(-\Delta_p)^{s}u=f\text{ in }\Omega,
$$
if for every $K\Subset\Omega$ and for every nonnegative $\phi\in W_{0}^{1,p}(K)$, we have
\begin{equation}\label{wksol}
\begin{gathered}
\int_{K}|\nabla u|^{p-2}\nabla u\cdot\nabla\phi\,dx+A\int_{\mathbb{R}^N}\int_{\mathbb{R}^N}J_p((u(x)-u(y)){(\phi(x)-\phi(y))}\,d\mu\leq\int_{K}f\phi\,dx\quad (\text{or }\geq),
\end{gathered}
\end{equation}
where
\begin{equation}\label{jpmu}
J_p(a)=|a|^{p-2}a,\quad a\in\mathbb{R},\quad d\mu=|x-y|^{-N-sp}\,dx\,dy.
\end{equation}
We say that $u$ is a weak solution of \eqref{maineqn}, if equality holds in \eqref{wksol} for every $\phi\in W_0^{1,p}(K)$. 
\normalcolor 
\end{Definition}

\begin{Remark}\label{defrmk2}
By Lemma \ref{locnon} and Lemma \ref{locnon1}, Definition \ref{wksol} makes sense.
\end{Remark}

We now  detail the notion of weak solutions to the Dirichlet boundary value problem. For that purpose, given $\Omega\subset\mathbb{R}^N$ an open and bounded set, consider a bounded domain $\Omega^{'}$ such that $\Omega\Subset\Omega^{'}\subset\mathbb{R}^N$\normalcolor. Then for $g\in W^{1,p}(\Omega')$, we define 
\begin{equation}\label{DS}
W_g^{1,p}(\Omega)=\{v\in W^{1,p}(\Omega)\cap L^{p-1}_{sp}(\mathbb{R}^N)\, :\, v-g\in W_0^{1,p}(\Omega)\}.
\end{equation}
When $u\in W_g^{1,p}(\Omega)$ we will repeatedly identify $u$ as being extended by $g$ outside of $\Omega$.

\begin{Definition}[Dirichlet problem]
Let $1<p<\infty,\,0<s<1$ and $A\geq 0$. Suppose $\Omega\Subset \Omega'\subset\mathbb{R}^N$ be two open and bounded sets, $f\in L^q(\Omega)$, with
\[
q\ge (p^*)'\quad \mbox{ if } p\not =N\qquad \mbox{ or }\qquad q>1\quad \mbox{ if }p=N,
\]
and $g\in W^{1,p}(\Omega')\cap L^{p-1}_{sp}(\mathbb{R}^N)$. We say that $u\in W_g^{1,p}(\Omega)$ is a {\it weak solution} of the boundary value problem
\begin{equation}
\label{BVP}
\left\{\begin{array}{rcll}
-\Delta_p u+A(-\Delta_p)^s\,u&=&f,&\mbox{ in }\Omega,\\
u&=&g,& \mbox{ in }\mathbb{R}^N\setminus \Omega,
\end{array}
\right.
\end{equation}
if for every $\phi\in W_0^{1,p}(\Omega)$, we have
\begin{equation}\label{Dpwksol}
\begin{gathered}
\int_{\Omega}|\nabla u|^{p-2}\nabla u\cdot\nabla\phi\,dx+A\int_{\mathbb{R}^N}\int_{\mathbb{R}^N}J_p((u(x)-u(y)){(\phi(x)-\phi(y))}\,d\mu=\int_{\Omega}f\phi\,dx,
\end{gathered}
\end{equation}
where $J_p$ and $d\mu$ are defined in \eqref{jpmu} above.
\end {Definition}
\begin{Remark}\label{defrmk1}
Note that Definition \ref{Dpwksol} makes sense by Lemma \ref{locnon} and Lemma \ref{locnon1}, since we may choose a smooth set $K$ such that $\Omega\Subset K\Subset\Omega'$.
\end{Remark}

\section{Existence and uniqueness}\label{sec:ex}
Here we prove existence and uniqueness of solutions of the Dirichlet problem \eqref{BVP}.
\begin{proof}[Proof of Theorem \ref{prop:death!}]
In what follows, whenever $X$ is a normed vector space, we denote by $X^*$ its topological dual.

We first note that $W_0^{1,p}(\Omega)$ is a separable reflexive Banach space. 
We now introduce the operator $\mathcal{A}:W_g^{1,p}(\Omega)\to (W_0^{1,p}(\Omega))^*$ defined by
\[
\begin{split}
\langle\mathcal{A}(v),\varphi\rangle&=\int_{\Omega'}|\nabla v|^{p-2}\nabla v\nabla\phi\,dx+A\iint_{\Omega'\times\Omega'} {J_p(v(x)-v(y))\,\big(\varphi(x)-\varphi(y)\big)}\,d\mu\\
&+2A\,\iint_{\Omega\times (\mathbb{R}^N\setminus\Omega^{'})}{J_p(v(x)-g(y))\,\varphi(x)}\,d\mu,\qquad v\in W_g^{1,p}(\Omega),\ \varphi\in W_0^{1,p}(\Omega),
\end{split}
\]
where $\langle \cdot,\cdot\rangle$ denotes the relevant duality product. We observe that $\mathcal{A}(v)\in (W_0^{1,p}(\Omega))^*$ for every $v\in W_g^{1,p}(\Omega)$ (by Lemma \ref{locnon} and \cite[Remark 1]{KKP}). Moreover, as in the proof of \cite[Lemma 3]{KKP}, we have that $\mathcal{A}$ has the following properties:
\begin{enumerate}
\item for every $v,u\in W_g^{1,p}(\Omega)$, we have
\[
\langle \mathcal{A}(u)-\mathcal{A}(v),u-v\rangle\geq 0,
\]
with equality if and only if $u=v$; This follows from applying Lemma \ref{lem:pineq} to the nonlocal part and noting that for the local term we have the following inequalities (see \cite[Page 11]{Simon}):
\begin{equation}
\label{strictmono}
\langle\mathcal{A}(u)-\mathcal{A}(v),u-v\rangle\geq
\begin{cases}
\displaystyle C_1\Big(\int_{\Omega}|\nabla(u-v)|^p\,dx\Big)^\frac{1}{p},\text{ if }p\geq 2,\\
\frac{\displaystyle C_2\big(\int_{\Omega}|\nabla(u-v)|^p\,dx\big)^\frac{2}{p}}{\displaystyle\left(\left(\int_{\Omega}|\nabla u|^p\,dx\right)^\frac{1}{p}+\left(\int_{\Omega}|\nabla v|^p\,dx\right)^\frac{1}{p}\right)^{2-p}},\text{ if }1<p<2,
\end{cases}
\end{equation}
for some positive constants $C_1,\,C_2$.
\item if $\{u_n\}_{n\in\mathbb{N}}\subset W_g^{1,p}(\Omega)$ converges in $W^{1,p}(\Omega)$ to $u\in W_g^{1,p}(\Omega)$, then
\[
\lim_{n\to\infty} \langle \mathcal{A}(u_n)-\mathcal{A}(u),v\rangle=0\quad \text{for all }v\in W_0^{1,p}(\Omega);
\]
This follows from the application of Lemma \ref{locnon} together with H\"older's inequality and the coupling of weak and strong convergence.
\item From \eqref{strictmono}, it follows that
\[
\lim_{\|u\|_{W^{1,p}(\Omega)}\to+\infty} \frac{\langle \mathcal{A}(u)-\mathcal{A}(g),u-g\rangle}{\|u-g\|_{W^{1,p}(\Omega)}}=+\infty.
\]
\end{enumerate} 
Finally, we introduce the modified functional
\[
\mathcal{A}_0(v):=\mathcal{A}(v+g),\qquad \mbox{ for every } v\in W_0^{1,p}(\Omega).
\]
We observe that $\mathcal{A}_0:W_0^{1,p}(\Omega)\to (W_0^{1,p}(\Omega))^*$.
Moreover, properties (1), (2) and (3) above imply that $\mathcal{A}_0$ is monotone, coercive and hemicontinuous (see \cite[Chapter II, Section 2]{Sh} for the relevant definitions). It is only left to observe that under the standing assumptions, the linear functional 
\[
T_f:v\mapsto \int_\Omega f\,v\,dx,\qquad v\in W_0^{1,p}(\Omega),
\] 
belongs to the topological dual of $W_0^{1,p}(\Omega)$. Notice that for every $v\in W_0^{1,p}(\Omega)$ we have\footnote{We assume for simplicity that $1<p<N$. The cases $p\geq N$ can be treated in the same manner, we leave the details to the reader.}
\[
|T_f(v)|=\left|\int_\Omega f\, v\,dx\right|\le \|f\|_{L^q(\Omega)}\,\|v\|_{L^{q'}(\Omega)}\le |\Omega|^{\frac{1}{q'}-\frac{1}{p^*}}\,\|f\|_{L^q(\Omega)}\,\|v\|_{L^{p^*}(\Omega)},
\]
and the last term can be controlled using the Sobolev embedding $W^{1,p}(\mathbb{R}^N)\to L^{p^*}(\mathbb{R}^N)$ (see \cite{Evans}).
Then by \cite[Corollary 2.2]{Sh}, we obtain the existence of $v\in W_0^{1,p}(\Omega)$ such that
\[
\langle \mathcal{A}_0(v),\varphi\rangle=\langle T_f,\varphi\rangle,\qquad \mbox{ for every }\varphi \in W_0^{1,p}(\Omega).
\]
By definition, this is equivalent to 
\[
\langle \mathcal{A}(v+g),\varphi\rangle=\langle T_f,\varphi\rangle,\qquad \mbox{ for every }\varphi \in W_0^{1,p}(\Omega),
\]
i.e.
\[
\begin{split}
&\int_{\Omega'}|\nabla(v+g)|^{p-2}\nabla(v+g)\nabla\phi\,dx+A\iint_{\Omega'\times\Omega'} {J_p(v(x)+g(x)-v(y)-g(y))\,\big(\varphi(x)-\varphi(y)\big)}\,d\mu\\
&+2A\,\iint_{\Omega\times (\mathbb{R}^N\setminus\Omega')}{J_p(v(x)+g(x)-g(y))\,\varphi(x)}\,d\mu=\int_\Omega f\,\varphi\,dx,
\end{split}
\]
which is the same as \eqref{Dpwksol}, since $v=0$ in $\mathbb{R}^N\setminus\Omega$ and that
\[
\begin{split}
2\,\iint_{\Omega\times (\mathbb{R}^N\setminus\Omega')}&{J_p(v(x)+g(x)-g(y))\,\varphi(x)}\,d\mu\\
&=\iint_{\Omega\times (\mathbb{R}^N\setminus\Omega')}{J_p(v(x)+g(x)-v(y)-g(y))\,\varphi(x)}\,d\mu\\
&-\iint_{(\mathbb{R}^N\setminus\Omega')\times \Omega}{J_p(v(x)+g(x)-v(y)-g(y))\,\varphi(y)}\,d\mu.
\end{split}
\]
Then $v+g$ is the desired solution. Uniqueness now follows from the {\it strict} monotonicity of the operator $\mathcal{A}_0$.
\end{proof}
\begin{Remark}[Variational solutions]
Under the slightly stronger assumption $g\in W^{1,p}(\Omega')\cap L^p_{s\,p}(\mathbb{R}^N)$, existence of the solution to \eqref{BVP} can be obtained by solving the following strictly convex variational problem
\[
\min\left\{\mathcal{F}(v)\, :\, v\in W^{1,p}_g(\Omega)\cap L^p_{s\,p}(\mathbb{R}^N)\right\},
\]
where the functional $\mathcal{F}$ is defined by
\[
\begin{split}
\mathcal{F}(v)&=\frac{1}{p}\int_{\Omega}|\nabla v|^p\,dx+\frac{A}{p}\,\iint_{\Omega'\times \Omega'} {|v(x)-v(y)|^p}\,d\mu+\frac{2A}{p}\,\iint_{\Omega\times (\mathbb{R}^N\setminus\Omega')}{|v(x)-g(y)|^p}\,d\mu-\int_\Omega f\,v\,dx.
\end{split}
\]\normalcolor
Existence of a minimizer can be obtained using the Direct Methods in the Calculus of Variations.
\end{Remark}

\section{Almost Lipschitz regularity for the homogeneous equation}\label{sec:alip}
In this section, we prove the almost Lipschitz regularity for the homogeneous equation. We first start with the result below, where we differentiate the equation discretely and test with powers of $\delta_h u$. This yields an iteration scheme of Moser-type. This is the core of the paper.
\begin{prop}\label{prop1}
Let $2\leq p<\infty$, $0<s<1$ and $0\leq A\leq 1$. Suppose that $u\in W^{1,p}_{\mathrm{loc}}(B_2(x_0))\cap L^{p-1}_{sp}(\mathbb{R}^N)$ is a weak solution of $-\Delta_p u+A(-\Delta_p)^s u=0$ in $B_2(x_0)$. Assume that
\begin{equation}\label{as1}
\|u\|_{L^\infty(B_1(x_0))}\leq 1,\qquad \int_{\mathbb{R}^N\setminus B_1(x_0)}\frac{|u(y)|^{p-1}}{|y|^{N+sp}}\,dy\leq 1.
\end{equation}
Let $0<h_0<\frac{1}{10}$ and $R$ be such that  $4h_0<R\leq 1-5h_0$ and $\nabla u\in L^q(B_{R+4h_0}(x_0))$ for some $q\geq p$.
Then
\begin{equation}\label{p1est}
\begin{split}
\sup_{0<|h|<h_0}\left\|\frac{\delta_h ^2 u}{|h|^{1+\frac{1}{q+1}}}\right\|^{q+1}_{L^{q+1}(B_{R-4h_0}(x_0))}&\leq C(1+A)\left(\int_{B_{R+4h_0}(x_0)}|\nabla u|^q\,dx+1\right),
\end{split}
\end{equation}
for some constant $C=C(N,h_0,p,q,s)>0$.
\end{prop}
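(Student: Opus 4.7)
The plan is to run a Moser-type iteration on the discretely differentiated equation. First I would shift the weak formulation by $h$ and subtract, obtaining the identity
\[
\int\bigl\langle J_p(\nabla u_h)-J_p(\nabla u),\nabla\phi\bigr\rangle\,dx+A\iint\bigl[J_p(u_h(x)-u_h(y))-J_p(u(x)-u(y))\bigr]\bigl(\phi(x)-\phi(y)\bigr)\,d\mu=0,
\]
valid for $\phi\in W_0^{1,p}$ compactly supported in a ball slightly larger than $B_{R-4h_0}(x_0)$; here $u_h(x):=u(x+h)$. The crucial test function is $\phi=\eta^{p}|\delta_h u|^{q+1-p}\delta_h u$, with $\eta\in C_c^\infty$ a cutoff equal to $1$ on $B_{R-4h_0}(x_0)$, supported in a slightly larger ball, and satisfying $|\nabla\eta|\leq C/h_0$. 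The specific power $q+1-p$ is chosen precisely in order to produce the exponents $q+1$ and $q+2$ in the final estimate.

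For the local part, the monotonicity inequality $\langle J_p(a)-J_p(b),a-b\rangle\geq c_p(|a|+|b|)^{p-2}|a-b|^{2}$ for $p\geq 2$ (which I take from the appendix) delivers a dominant term of the form $c\int\eta^p|\delta_h u|^{q+1-p}(|\nabla u_h|+|\nabla u|)^{p-2}|\nabla\delta_h u|^{2}\,dx$, while the $\nabla\eta$ contribution is absorbed via Young's inequality, leaving a remainder $C\int\eta^{p-2}|\nabla\eta|^2|\delta_h u|^{q+3-p}(|\nabla u_h|+|\nabla u|)^{p-2}\,dx$. I would then peel one factor of $|\delta_h u|$ off this remainder using the $L^\infty$ bound $\|u\|_{L^\infty(B_1(x_0))}\leq 1$, rewrite the remaining $|\delta_h u|^{q+2-p}$ via the representation $|\delta_h u|\leq|h|\int_0^1|\nabla u(\cdot+th)|\,dt$ combined with Jensen's inequality, and finally apply Young's inequality with conjugate exponents $q/(q+2-p)$ and $q/(p-2)$, bringing the error to the desired form $C|h|^{q+2-p}\int_{B_{R+4h_0}(x_0)}|\nabla u|^{q}\,dx$. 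The nonlocal term is handled analogously via the pointwise bound $|J_p(A+B)-J_p(A)|\lesssim(|A|+|A+B|)^{p-2}|B|$, splitting the double integral into a near part ($|x-y|<1$), whose positive contribution is absorbed or matched to the local scale, and a tail part ($|x-y|\geq 1$) controlled uniformly in $h$ by the normalization \eqref{as1}.

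Combining the local and nonlocal estimates yields
\[
\int\eta^{p}|\delta_h u|^{q+1-p}|\nabla\delta_h u|^{p}\,dx\leq C(1+A)\,|h|^{q+2-p}\Bigl(\int_{B_{R+4h_0}(x_0)}|\nabla u|^{q}\,dx+1\Bigr).
\]
Setting $G(t):=\mathrm{sgn}(t)\,|t|^{(q+1)/p}$, the left-hand integrand equals (up to a multiplicative constant) $\eta^{p}|\nabla G(\delta_h u)|^{p}$. Applying Lemma \ref{lem:discdiffW1p} to the compactly supported function $\eta\,G(\delta_h u)\in W^{1,p}(\mathbb{R}^N)$ (and noting that on $B_{R-4h_0}(x_0)$ the cutoff equals $1$ both at $x$ and $x+h$, whence $\delta_h(\eta\,G(\delta_h u))=\delta_h G(\delta_h u)$ there) then gives
\[
\int_{B_{R-4h_0}(x_0)}|\delta_h G(\delta_h u)|^{p}\,dx\leq |h|^{p}\int|\nabla(\eta G(\delta_h u))|^{p}\,dx\leq C(1+A)\,|h|^{q+2}\Bigl(\int|\nabla u|^{q}+1\Bigr).
\]
The last step is the pointwise inequality $|J_\alpha(a)-J_\alpha(b)|^{p}\geq c\,|a-b|^{\alpha p}$ for $\alpha=(q+1)/p\geq 1$ (again from the appendix), applied with $a=\delta_h u(x+h)$, $b=\delta_h u(x)$, so that $a-b=\delta_h^{2}u(x)$; this turns the preceding bound into $\int|\delta_h^{2}u|^{q+1}\,dx\leq C(1+A)|h|^{q+2}(\int|\nabla u|^{q}+1)$, which is precisely \eqref{p1est} after dividing by $|h|^{q+2}$ and taking the supremum over $0<|h|<h_0$. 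I expect the main obstacle to be the careful decomposition and bookkeeping of the nonlocal term so that its near part produces a contribution with the correct sign and structure to be absorbed, while its tail part stays uniformly controlled in $h$ via \eqref{as1}; once this is in place, the rest of the argument is a controlled but essentially algorithmic Moser-type computation.
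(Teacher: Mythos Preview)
Your proposal is correct and follows the same approach as the paper: both discretely differentiate the equation, test with $\eta^p\,J_{q-p+3}(\delta_h u)$, use the monotonicity of $J_p$ to extract the main positive gradient term and Young's inequality to absorb the cutoff cross term, handle the nonlocal piece by splitting into a near part (whose principal contribution is nonnegative and may be discarded) and a tail part controlled by \eqref{as1}, and finally convert $\nabla G(\delta_h u)$ into control on $\delta_h^2 u$ via the difference-quotient characterization of $W^{1,p}$ together with the elementary inequality $|G(a)-G(b)|^p\ge c|a-b|^{q+1}$. The only cosmetic difference is that the paper normalizes the test function by a power of $|h|$ upfront, writing $\phi=J_{\alpha+1}(\delta_h u/|h|^\theta)\eta^p$ with $\alpha=q-p+2$ and $\theta=(q-p+1)/(q-p+2)$, whereas you track the $|h|$ factors explicitly throughout.
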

\begin{proof}
Without loss of generality, we assume that $x_0=0$. We divide the proof into five steps. \\
\noindent
\textbf{Step 1: Discrete differentiation of the equation.} Let $r=R-4h_0$ and $\phi\in W^{1,p}(B_R)$ vanish outside $B_{\frac{R+r}{2}}$. Since $u$ is a weak solution of $-\Delta_p u+A(-\Delta_p)^s u=0$ in $B_2$, from Definition \ref{subsupsolution}, we have
\begin{equation}\label{defap1}
\int_{B_R}|\nabla u|^{p-2}\nabla u\nabla\phi\,dx+A\int_{\mathbb{R}^n}\int_{\mathbb{R}^n}(J_p(u(x)-u(y))){(\phi(x)-\phi(y))}\,d\mu=0.
\end{equation}
Let $h\in\mathbb{R}^n\setminus\{0\}$ be such that $|h|<h_0$. Choosing $\phi=\phi_{-h}$ in \eqref{defap1} and using a change of variables, we have
\begin{equation}\label{tst1}
\int_{B_R}|\nabla u_h|^{p-2}\nabla u_h\nabla\phi\,dx+A\int_{\mathbb{R}^N}\int_{\mathbb{R}^N}(J_p(u_h(x)-u_h(y))){(\phi(x)-\phi(y))}\,d\mu=0.
\end{equation}
Subtracting \eqref{defap1} with \eqref{tst1} and dividing the resulting equation by $|h|$, we obtain
\begin{equation}\label{tst2}
\begin{split}
&\int_{B_R}\frac{(|\nabla u_h|^{p-2}\nabla u_h-|\nabla u|^{p-2}\nabla u)}{|h|}\nabla\phi\,dx\\
&+A\int_{\mathbb{R}^N}\int_{\mathbb{R}^N}\frac{(J_p(u_h(x)-u_h(y))-(J_p(u(x)-u(y)))}{|h|}{(\phi(x)-\phi(y))}\,d\mu=0,
\end{split}
\end{equation}
for every $\phi\in W^{1,p}(B_R)$ vanishing outside $B_{\frac{R+r}{2}}$. Let $\eta$ be a nonnegative Lipschitz cut-off function such that
$$
\eta\equiv 1\text{ on } B_r,\quad \eta\equiv 0\text{ on }\mathbb{R}^N\setminus B_{\frac{R+r}{2}},\quad |\nabla\eta|\leq\frac{C}{R-r}=\frac{C}{4h_0},
$$
for some constant $C=C(N)>0$.
Suppose $\alpha\geq 1$, $\theta>0$ and testing \eqref{tst2} with
$$
\phi=J_{\alpha+1}\Big(\frac{u_h-u}{|h|^{\theta}}\Big)\eta^p,\quad 0<|h|<h_0,
$$
we get
\begin{equation}\label{tst3}
I+AJ=0,
\end{equation}
where
\begin{equation}\label{l}
\begin{split}
I&=\int_{B_R}\frac{(|\nabla u_h|^{p-2}\nabla u_h-|\nabla u|^{p-2}\nabla u)}{|h|^{1+\theta\alpha}}\nabla(J_{\alpha+1}(u_h-u)\eta^p)\,dx
\end{split}
\end{equation}
and
\begin{equation}\label{nl}
\begin{split}
J&=\int_{\mathbb{R}^n}\int_{\mathbb{R}^n}\frac{(J_p(u_h(x)-u_h(y))-(J_p(u(x)-u(y)))}{|h|^{1+\theta\alpha}}\\
&\times(J_{\alpha+1}(u_h(x)-u(x))\eta^p(x)-J_{\alpha+1}(u_h(y)-u(y))\eta^p(y))\,d\mu.
\end{split}
\end{equation}
\textbf{Step 2: Estimate of the local integral $I$.} We observe that
\begin{equation}\label{lI}
\begin{split}
I_{12}&=(|\nabla u_h|^{p-2}\nabla u_h-|\nabla u|^{p-2}\nabla u)\nabla(J_{\alpha+1}({u_h-u})\eta^p)\\
&=(|\nabla u_h|^{p-2}\nabla u_h-|\nabla u|^{p-2}\nabla u)\eta^p\nabla(J_{\alpha+1}(u_h-u))\\
&\quad+(|\nabla u_h|^{p-2}\nabla u_h-|\nabla u|^{p-2}\nabla u)J_{\alpha+1}(u_h-u)\nabla(\eta^p)\\
&\geq(|\nabla u_h|^{p-2}\nabla u_h-|\nabla u|^{p-2}\nabla u)\eta^p\nabla(J_{\alpha+1}(u_h-u))\\
&\quad-\big||\nabla u_h|^{p-2}\nabla u_h-|\nabla u|^{p-2}\nabla u|\big||u_h-u|^{\alpha}|\nabla(\eta^p)|\\
&:=I_1-I_2.
\end{split}
\end{equation}
\textbf{Estimate of $I_1$:} Since $p\geq 2$, using Lemma \ref{in1} and that $\alpha\geq 1$, we get
\begin{equation}\label{lI1}
\begin{split}
I_1&=(|\nabla u_h|^{p-2}\nabla u_h-|\nabla u|^{p-2}\nabla u)\eta^p\nabla(J_{\alpha+1}(u_h-u))\\
&=\alpha(|\nabla u_h|^{p-2}\nabla u_h-|\nabla u|^{p-2}\nabla u)\nabla(u_h-u)|u_h-u|^{\alpha-1}\eta^p\\
&\geq \frac{4}{p^2}(|\nabla u_h|^\frac{p-2}{2}\nabla u_h-|\nabla u|^\frac{p-2}{2}\nabla u\big|^2 |u_h-u|^{\alpha-1}\eta^{p}.
\end{split}
\end{equation}
Moreover, for $p\geq 2$, using Lemma \ref{lem:pineq}, we have
\begin{equation}\label{lI1est}
\begin{split}
I_1&=\alpha(|\nabla u_h|^{p-2}\nabla u_h-|\nabla u|^{p-2}\nabla u)\nabla(u_h-u)|u_h-u|^{\alpha-1}\eta^p\\
&\geq p2^{2-p}|\nabla(u_h-u)|^p|u_h-u|^{\alpha-1}\eta^p\\
&=p2^{2-p}\Big(\frac{p}{\alpha+p-1}\Big)^p\Big|\nabla\Big(|u_h-u|^\frac{\alpha-1}{p}(u_h-u)\Big)\Big|^p\eta^p\\
&\geq p2^{2-p}\Big(\frac{p}{\alpha+p-1}\Big)^p\Big\{2^{-p}\Big|\nabla\Big(|u_h-u|^\frac{\alpha-1}{p}(u_h-u)\eta\Big)\Big|^p-\Big||u_h-u|^\frac{\alpha-1}{p}(u_h-u)\Big)\Big|^p|\nabla\eta|^p\Big\}.
\end{split}
\end{equation}
\noindent
\textbf{Estimate of $I_2$:} Since $p\geq 2$, using Lemma \ref{in2} and Young's inequality with exponents 2 and 2, we obtain
\begin{equation}\label{lI2}
\begin{split}
I_2&=\big||\nabla u_h|^{p-2}\nabla u_h-|\nabla u|^{p-2}\nabla u|\big||u_h-u|^{\alpha}|\nabla(\eta^p)|\\
&\leq (p-1)(|\nabla u_h|^\frac{p-2}{2}+|\nabla u|^\frac{p-2}{2})\big||\nabla u_h|^\frac{p-2}{2}\nabla u_h-|\nabla u|^\frac{p-2}{2}\nabla u\big||u_h-u|^{\alpha}2\eta^\frac{p}{2}|\nabla(\eta^\frac{p}{2})|\\
&=\Big((p-1)(|\nabla u_h|^\frac{p-2}{2}+|\nabla u|^\frac{p-2}{2})\big||u_h-u|^{\frac{\alpha+1}{2}}|\nabla(\eta^\frac{p}{2})|\Big)  \Big(|\nabla u_h|^\frac{p-2}{2}\nabla u_h-|\nabla u|^\frac{p-2}{2}\nabla u\big||u_h-u|^{\frac{\alpha-1}{2}}2\eta^\frac{p}{2}\Big)\\
&\leq C(p,\epsilon)\big(|\nabla u_h|^\frac{p-2}{2}+|\nabla u|^\frac{p-2}{2}\big)^2\big||u_h-u|^{\alpha+1}|\nabla(\eta^\frac{p}{2})|^2\\
&\quad+\epsilon(|\nabla u_h|^\frac{p-2}{2}\nabla u_h-|\nabla u|^\frac{p-2}{2}\nabla u\big|^2 |u_h-u|^{\alpha-1}\eta^{p}\\
&\leq C(p,\epsilon)\big(|\nabla u_h|^\frac{p-2}{2}+|\nabla u|^\frac{p-2}{2}\big)^2\big||u_h-u|^{\alpha+1}|\nabla(\eta^\frac{p}{2})|^2+\frac{\epsilon p^2}{4}I_1,
\end{split}
\end{equation}
for some $\epsilon\in(0,\frac{4}{p^2})$, where to obtain the last inequality above, we have used the estimate \eqref{lI1}. Thus, using the estimate \eqref{lI2} in \eqref{lI}, it follows that
\begin{equation}\label{lI12}
\begin{split}
I_{12}&\geq cI_1-C\big(|\nabla u_h|^\frac{p-2}{2}+|\nabla u|^\frac{p-2}{2}\big)^2\big||u_h-u|^{\alpha+1}|\nabla(\eta^\frac{p}{2})|^2\\
&\geq cp2^{2-p}\Big(\frac{p}{\alpha+p-1}\Big)^p\Big\{2^{-p}\Big|\nabla\Big(|u_h-u|^\frac{\alpha-1}{p}(u_h-u)\eta\Big)\Big|^p-\Big||u_h-u|^\frac{\alpha-1}{p}(u_h-u)\Big)\Big|^p|\nabla\eta|^p\Big\}\\
&\quad-C\big(|\nabla u_h|^\frac{p-2}{2}+|\nabla u|^\frac{p-2}{2}\big)^2\big||u_h-u|^{\alpha+1}|\nabla(\eta^\frac{p}{2})|^2,
\end{split}
\end{equation}
for some positive constants $c,\,C$ depending on $p$. Therefore, using the estimate \eqref{lI12} in \eqref{l}, we have
\begin{equation}\label{lIest}
\begin{split}
I&=\int_{B_R}\frac{I_{12}}{|h|^{1+\theta\alpha}}\,dx\\
&\geq c\int_{B_R}\Big|\nabla\Big(\frac{|u_h-u|^\frac{\alpha-1}{p}(u_h-u)\eta}{|h|^\frac{1+\theta\alpha}{p}}\Big)\Big|^p\,dx-c\int_{B_R}\frac{\Big||u_h-u|^\frac{\alpha-1}{p}(u_h-u)\Big|^p|\nabla\eta|^p}{|h|^{1+\theta\alpha}}\,dx\\
&\quad-C\int_{B_R}\frac{\big(|\nabla u_h|^\frac{p-2}{2}+|\nabla u|^\frac{p-2}{2}\big)^2\big||u_h-u|^{\alpha+1}|\nabla(\eta^\frac{p}{2})|^2}{|h|^{1+\theta\alpha}}\,dx\\
&:=cI_{13}
-cI_{14}-CI_{15},
\end{split}
\end{equation}
for some positive constants $c,\,C$ depending on $p,\,\alpha$.\\
\textbf{Estimate of $I_{14}:$} Let $p>2$, then using the properties of $\eta$ and Young's inequality with exponents $\frac{q}{p-2}$ and $\frac{q}{q-p+2}$, using that $\|u\|_{L^\infty(B_1)}\leq 1$ from \eqref{as1}, we have
\begin{equation}\label{lI14}
\begin{split}
I_{14}&=\int_{B_R}\frac{\Big||u_h-u|^\frac{\alpha-1}{p}(u_h-u)\Big|^p|\nabla\eta|^p}{|h|^{1+\theta\alpha}}\,dx\\
&\leq\int_{B_R}\frac{|\delta_h u|^{\alpha+p-1}}{|h|^{1+\theta\alpha}}|\nabla\eta|^p\,dx\\
&\leq\Big(\frac{C}{4h_0}\Big)^p\Big(\int_{B_R}\frac{|\delta_h u|^\frac{\alpha q}{q-p+2}}{|h|^\frac{(1+\theta\alpha)q}{q-p+2}}\,dx+\int_{B_R}|\delta_h u|^\frac{(p-1)q}{p-2}\,dx\Big)\\
&\leq C\Big(\int_{B_R}\frac{|\delta_h u|^\frac{\alpha q}{q-p+2}}{|h|^\frac{(1+\theta\alpha)q}{q-p+2}}\,dx+1\Big),
\end{split}
\end{equation}
for some constant $C=C(N,h_0,p,q)>0$. Note that when $p=2$, again using that $\|u\|_{L^\infty(B_1)}\leq 1$ from \eqref{as1}, we have
$$
|\delta_h u|^{\alpha+1}\leq 2\|u\|_{L^{\infty}(B_{R+h_0})}|\delta_h u|^{\alpha}\leq 2|\delta_h u|^\alpha,
$$
which gives the estimate \eqref{lI14} for $p=2$.
\\
\textbf{Estimate of $I_{15}$:} We observe that
\begin{equation}\label{lI15}
\begin{split}
I_{15}&=\int_{B_R}\frac{\big(|\nabla u_h|^\frac{p-2}{2}+|\nabla u|^\frac{p-2}{2}\big)^2\big||u_h-u|^{\alpha+1}|\nabla(\eta^\frac{p}{2})|^2}{|h|^{1+\theta\alpha}}\,dx\\
&\leq 4\int_{B_R}\frac{\big(|\nabla u_h|^{p-2}+|\nabla u|^{p-2}\big)\big||u_h-u|^{\alpha+1}|\nabla(\eta^\frac{p}{2})|^2}{|h|^{1+\theta\alpha}}\,dx\\
&=4\int_{B_R}\frac{|\nabla u_h|^{p-2}\big||u_h-u|^{\alpha+1}|\nabla(\eta^\frac{p}{2})|^2}{|h|^{1+\theta\alpha}}\,dx+4\int_{B_R}\frac{|\nabla u|^{p-2}\big||u_h-u|^{\alpha+1}|\nabla(\eta^\frac{p}{2})|^2}{|h|^{1+\theta\alpha}}\,dx\\
&:=4(I_{16}+I_{17}).
\end{split}
\end{equation}
\\
\textbf{Estimates of $I_{16}$ and $I_{17}$:} If $p=2$, using the boundedness assumption $\|u\|_{L^\infty(B_1)}\leq 1$ from \eqref{as1} and the properties of $\eta$, we have
\begin{equation}\label{lI16}
\begin{split}
\int_{B_R}\frac{|u_h-u|^{\alpha+1}|\nabla\eta|^2}{|h|^{1+\theta\alpha}}\,dx&\leq \|u\|_{L^{\infty}(B_{R+h_0})}\int_{B_R}\frac{|\nabla\eta|^2|\delta_h u|^{\alpha}}{|h|^{1+\theta\alpha}}\,dx\\
&\leq \Big(\frac{C}{4h_0}\Big)^2\int_{B_R}\frac{|\delta_h u|^{\alpha}}{|h|^{1+\theta\alpha}}\,dx,
\end{split}
\end{equation}
for some $C=C(N,p)>0$. For $p>2$, using Young's inequality with exponents $\frac{q}{p-2}$ and $\frac{q}{q-p+2}$, we get
\begin{equation}\label{lI16est}
\begin{split}
\int_{B_R}\frac{|\nabla u_h|^{p-2}|u_h-u|^{\alpha+1}|\nabla\eta|^2}{|h|^{1+\theta\alpha}}\,dx&\leq C\int_{B_R}|\nabla u_h|^q\,dx+\Big(\frac{C}{h_0}\Big)^\frac{2q}{q-p+2}\int_{B_R}\frac{|\delta_h u|^\frac{(\alpha+1)q}{q-p+2}}{|h|^\frac{(1+\theta\alpha)q}{q-p+2}}\,dx\\
&\leq C\int_{B_R}|\nabla u_h|^q\,dx+C\int_{B_R}\|u\|_{L^\infty(B_{R+h_0})}^{\frac{q}{q-p+2}}\frac{|\delta_h u|^\frac{\alpha q}{q-p+2}}{|h|^\frac{(1+\theta\alpha)q}{q-p+2}}\,dx\\
&\leq C\int_{B_R}|\nabla u_h|^q\,dx+C\int_{B_R}\frac{|\delta_h u|^\frac{\alpha q}{q-p+2}}{|h|^\frac{(1+\theta\alpha)q}{q-p+2}}\,dx 
\end{split}
\end{equation}
for $C=C(N,h_0,p,q)>0$, where we have again used using the boundedness assumption $\|u\|_{L^\infty(B_1)}\leq 1$ from \eqref{as1}. Therefore, using \eqref{lI16} and \eqref{lI16est}, for any $p\geq 2$, we obtain
\begin{equation}\label{lI16fest}
\begin{split}
I_{16}&\leq C\int_{B_R}|\nabla u_h|^q\,dx+C\int_{B_R}\frac{|\delta_h u|^\frac{\alpha q}{q-p+2}}{|h|^\frac{(1+\theta\alpha)q}{q-p+2}}\,dx,
\end{split}
\end{equation}
for $C=C(N,h_0,p,q)>0$. Similarly, we obtain
\begin{equation}\label{lI17fest}
\begin{split}
I_{17}&\leq C\int_{B_R}|\nabla u|^q\,dx+C\int_{B_R}\frac{|\delta_h u|^\frac{\alpha q}{q-p+2}}{|h|^\frac{(1+\theta\alpha)q}{q-p+2}}\,dx 
\end{split}
\end{equation}
for $C=C(N,h_0,p,q)>0$. Combining the estimates \eqref{lI16fest} and \eqref{lI17fest} in \eqref{lI15}, we have
\begin{equation}\label{lI15fest}
\begin{split}
I_{15}&\leq C\int_{B_R}|\nabla u_h|^q\,dx+C\int_{B_R}|\nabla u|^q\,dx+C\int_{B_R}\frac{|\delta_h u|^\frac{\alpha q}{q-p+2}}{|h|^\frac{(1+\theta\alpha)q}{q-p+2}}\,dx 
\end{split}
\end{equation}
for $C=C(N,h_0,p,q)>0$. Using the estimates \eqref{lI14} and \eqref{lI15fest} in \eqref{lIest} we have
\begin{equation}\label{lIfest}
\begin{split}
I&\geq cI_{13}-C\int_{B_R}|\nabla u_h|^q\,dx-C\int_{B_R}|\nabla u|^q\,dx-C\int_{B_R}\frac{|\delta_h u|^\frac{\alpha q}{q-p+2}}{|h|^\frac{(1+\theta\alpha)q}{q-p+2}}\,dx-C\\
&=c\int_{B_R}\Big|\nabla\Big(\frac{|u_h-u|^\frac{\alpha-1}{p}(u_h-u)\eta}{|h|^\frac{1+\theta\alpha}{p}}\Big)\Big|^p\,dx-C\int_{B_R}|\nabla u_h|^q\,dx\\
&-C\int_{B_R}|\nabla u|^q\,dx-C\int_{B_R}\frac{|\delta_h u|^\frac{\alpha q}{q-p+2}}{|h|^\frac{(1+\theta\alpha)q}{q-p+2}}\,dx-C
\end{split}
\end{equation}
for $c=c(p,\alpha)>0$ and $C=C(N,h_0,p,q,\alpha)>0$.
\\
\textbf{Step 3: Estimate of the nonlocal integral $J$.} First, we notice that
\begin{equation}\label{nJ}
\begin{split}
J&=J_1+J_2-J_3,
\end{split}
\end{equation}
where
\begin{align*}
J_1&=\int_{B_R}\int_{B_R}\frac{(J_p(u_h(x)-u_h(y))-(J_p(u(x)-u(y)))}{|h|^{1+\theta\alpha}}\\
&\times(J_{\alpha+1}(u_h(x)-u(x))\eta^p(x)-J_{\alpha+1}(u_h(y)-u(y))\eta^p(y))\,d\mu,
\end{align*}

\begin{align*}
J_2&=\int_{B_\frac{R+r}{2}}\int_{\mathbb{R}^N\setminus B_R}\frac{(J_p(u_h(x)-u_h(y))-(J_p(u(x)-u(y)))}{|h|^{1+\theta\alpha}}J_{\alpha+1}(u_h(x)-u(x))\eta^p(x)\,d\mu
\end{align*}
and
\begin{align*}
J_3&=-\int_{\mathbb{R}^N\setminus B_R}\int_{B_\frac{R+r}{2}}\frac{(J_p(u_h(x)-u_h(y))-(J_p(u(x)-u(y)))}{|h|^{1+\theta\alpha}}J_{\alpha+1}(u_h(y)-u(y))\eta^p(y)\,d\mu.
\end{align*}
\textbf{Estimate of $J_1$:} Proceeding exactly as in the proof of the estimate of $\mathcal{I}_1$ in \cite[Step 1, pages 813-817]{BLS}, we get
\begin{equation}\label{nJ1}
\begin{split}
J_1&=\int_{B_R}\int_{B_R}\frac{(J_p(u_h(x)-u_h(y))-(J_p(u(x)-u(y)))}{|h|^{1+\theta\alpha}}\\
&\times(J_{\alpha+1}(u_h(x)-u(x))\eta^p(x)-J_{\alpha+1}(u_h(y)-u(y))\eta^p(y))\,d\mu\\
&\geq c\left[\frac{|u_h-u|^\frac{\alpha-1}{p}(u_h-u)}{|h|^{1+\theta\alpha}}\eta\right]^p_{W^{s,p}(B_R)}-CJ_{11}-CJ_{12},
\end{split}
\end{equation}
for some constants $c=c(p,\alpha)>0$ and $C=C(p,\alpha)>0$, where
\begin{align*}
J_{11}&=\int_{B_R}\int_{B_R}\Big(|u_h(x)-u_h(y)|^\frac{p-2}{2}+|u(x)-u(y)|^\frac{p-2}{2}\Big)^2|\eta(x)^\frac{p}{2}-\eta(y)^\frac{p}{2}|^2\\
&\quad\times\frac{|u_h(x)-u(x)|^{\alpha+1}+|u_h(y)-u(y)|^{\alpha+1}}{|h|^{1+\theta\alpha}}\,d\mu
\end{align*}
and
\begin{align*}
J_{12}=\int_{B_R}\int_{B_R}\Big(\frac{|u_h(x)-u(x)|^{\alpha-1+p}}{|h|^{1+\theta\alpha}}+\frac{|u_h(y)-u(y)|^{\alpha-1+p}}{|h|^{1+\theta\alpha}}\Big)|\eta(x)-\eta(y)|^p\,d\mu.
\end{align*}
Proceeding along the lines of the proof of the estimates of $\mathcal{I}_{11}$\footnote{We remark that to estimate $J_{11}$ above in \eqref{nJ1}, we also used Lemma \ref{locnon} to estimate the fractional seminorm $[u]_{W^{\frac{s(p-2-\epsilon)}{p-2},q}(B_{R+h_0})}^q$ on page 818 in \cite{BLS}.} and $\mathcal{I}_{12}$ in \cite[Step 2, pages 817-819]{BLS}, we get
\begin{equation}\label{nJ11}
\begin{split}
|J_{11}|&\leq C\left(\int_{B_R}\frac{|\delta_h u|^\frac{\alpha q}{q-p+2}}{|h|^{(1+\theta\alpha)\frac{q}{q-p+2}}}\,dx+\int_{B_{R+4h_0}}|\nabla u|^q\,dx+1\right),
\end{split}
\end{equation}
and
\begin{equation}\label{nJ12}
\begin{split}
|J_{12}|&\leq C\left(\int_{B_R}\frac{|\delta_h u|^\frac{\alpha q}{q-p+2}}{|h|^{(1+\theta\alpha)\frac{q}{q-p+2}}}\,dx+1\right),
\end{split}
\end{equation}
where $C=C(N,h_0,p,s,q)>0$. Therefore, using the estimates \eqref{nJ11} and \eqref{nJ12} in \eqref{nJ1}, we have
\begin{equation}\label{nJ1est}
\begin{split}
J_1&\geq c\left[\frac{|u_h-u|^\frac{\alpha-1}{p}(u_h-u)}{|h|^{1+\theta\alpha}}\eta\right]^p_{W^{s,p}(B_R)}\\
&\quad-C\left(\int_{B_R}\frac{|\delta_h u|^\frac{\alpha q}{q-p+2}}{|h|^{(1+\theta\alpha)\frac{q}{q-p+2}}}\,dx+\int_{B_{R+4h_0}}|\nabla u|^q\,dx+1\right),
\end{split}
\end{equation}
for some constants $c=c(p,\alpha)>0$ and $C=C(N,h_0,p,s,q,\alpha)>0$.\\
\textbf{Estimates of $J_2$ and $J_3$:} Noting the assumptions in \eqref{as1} and then proceeding along the lines of the proof of the estimates of $\mathcal{I}_2$ and $\mathcal{I}_3$ in \cite[Step 3, pages 819-820]{BLS}, it follows that
\begin{equation}\label{nJ23}
\begin{split}
|J_2|+|J_3|&\leq C\left(1+\int_{B_R}\Big|\frac{\delta_h u}{|h|^\frac{1+\theta\alpha}{\alpha}}\Big|^\frac{\alpha q}{q-p+2}\,dx\right),
\end{split}
\end{equation}
where $C=C(N,h_0,s,p)>0$. Combining the estimates \eqref{nJ1est} and \eqref{nJ23} in \eqref{nJ}, we have
\begin{equation}\label{nJfest}
\begin{split}
J&\geq c\left[\frac{|u_h-u|^\frac{\alpha-1}{p}(u_h-u)}{|h|^{1+\theta\alpha}}\eta\right]^p_{W^{s,p}(B_R)}\\
&\quad-C\left(\int_{B_R}\Big|\frac{\delta_h u}{|h|^\frac{1+\theta\alpha}{\alpha}}\Big|^\frac{\alpha q}{q-p+2}\,dx+\int_{B_{R+4h_0}}|\nabla u|^q\,dx+1\right),
\end{split}
\end{equation}
for some constants $c=c(p,\alpha)>0$ and $C=C(N,h_0,p,s,q,\alpha)>0$.\\
\textbf{Step 4: Going back to the equation.} Inserting the estimates \eqref{lIfest} and \eqref{nJfest} in \eqref{tst3}, it follows that
\begin{equation}\label{IJest}
\begin{split}
\int_{B_R}\Big|\nabla\Big(\frac{|u_h-u|^\frac{\alpha-1}{p}(u_h-u)\eta}{|h|^\frac{1+\theta\alpha}{p}}\Big)\Big|^p\,dx&\leq C\Big(\int_{B_R}|\nabla u_h|^q\,dx+\int_{B_R}|\nabla u|^q\,dx+\int_{B_R}\Big|\frac{\delta_h u}{|h|^\frac{1+\theta\alpha}{\alpha}}\Big|^\frac{\alpha q}{q-p+2}\,dx+1\Big)\\
&+CA\Big(\int_{B_R}\Big|\frac{\delta_h u}{|h|^\frac{1+\theta\alpha}{\alpha}}\Big|^\frac{\alpha q}{q-p+2}\,dx+\int_{B_{R+4h_0}}|\nabla u|^q\,dx+1\Big).
\end{split}
\end{equation}
for some constant $C=C(N,h_0,p,s,q,\alpha)>0$. Next, we estimate the integral in the left hand side of the above inequality \eqref{IJest}. Indeed, we observe that following the lines of the proof of the estimate $(4.12)$ in \cite[page 821]{BLS} (one can run the same argument with $s=1$ there), we have the following estimate
\begin{equation}\label{gest}
\begin{split}
\Bigg\|\frac{\delta_{\xi}\delta_{h}u}{|\xi|^\frac{p}{\alpha-1+p}|h|^\frac{1+\theta\alpha}{\alpha-1+p}}\Bigg\|_{L^{\alpha-1+p}(B_r)}^{\alpha-1+p}&\leq C\Bigg\|\frac{\delta_{\xi}}{|\xi|}\Big(\frac{|\delta_h u|^\frac{\alpha-1}{p}(\delta_h u)\eta}{|h|^\frac{1+\theta\alpha}{p}}\Big)\Bigg\|_{L^p(\mathbb{R}^N)}^p\\
&\quad+C\Bigg\|\frac{\delta_{\xi}\eta}{|\xi|}\frac{\Big(|\delta_h u|^\frac{\alpha-1}{p}(\delta_h u)\Big)_{\xi}}{|h|^\frac{1+\theta\alpha}{p}}\Bigg\|_{L^p(\mathbb{R}^N)}^p,
\end{split}
\end{equation}
where $C=C(p,\alpha)>0$. Next, by Lemma \ref{lem:discdiffW1p} combined with the fact that $\eta$ is supported only in $B_R$
\begin{equation}\label{gest1}
\begin{split}
\sup_{|\xi|>0}\Bigg\|\frac{\delta_{\xi}}{|\xi|}\Big(\frac{|\delta_h u|^\frac{\alpha-1}{p}(\delta_h u)\eta}{|h|^\frac{1+\theta\alpha}{p}}\Big)\Bigg\|_{L^p(\mathbb{R}^n)}^p&\leq C \int_{B_R}\Big|\nabla\Big(\frac{|\delta_h u|^\frac{\alpha-1}{p}(\delta_h u)\eta}{|h|^\frac{1+\theta\alpha}{p}}\Big)\Big|^p\,dx,
\end{split}
\end{equation}
where $C=C(N,h_0,p)>0$. Noting the properties of $\eta$, the fact that $\|u\|_{L^\infty(B_1)}\leq 1$ from \eqref{as1} and using Young's inequality as in the proof of the estimate $(4.14)$ in \cite[pages 821-822]{BLS}, for any $0<|\xi|<h_0$, we get
\begin{equation}\label{gest2}
\begin{split}
\Bigg\|\frac{\delta_{\xi}\eta}{|\xi|}\frac{\Big(|\delta_h u|^\frac{\alpha-1}{p}(\delta_h u)\Big)_{\xi}}{|h|^\frac{1+\theta\alpha}{p}}\Bigg\|_{L^p(\mathbb{R}^N)}^p&\leq C\Big(\int_{B_R}\Big|\frac{\delta_h u}{|h|^\frac{1+\theta\alpha}{\alpha}}\Big|^\frac{q\alpha}{q-p+2}\,dx+1\Big),
\end{split}
\end{equation}
where $C=C(N,h_0,p)>0$. Combining \eqref{gest1} and \eqref{gest2} in \eqref{gest}, for every $0<|\xi|<h_0$, we have
\begin{equation}\label{gest3}
\begin{split}
\Bigg\|\frac{\delta_{\xi}\delta_{h}u}{|\xi|^\frac{p}{\alpha-1+p}|h|^\frac{1+\theta\alpha}{\alpha-1+p}}\Bigg\|_{L^{\alpha-1+p}(B_r)}^{\alpha-1+p}&\leq  C \int_{B_R}\Big|\nabla\Big(\frac{|\delta_h u|^\frac{\alpha-1}{p}(\delta_h u)\eta}{|h|^\frac{1+\theta\alpha}{p}}\Big)\Big|^p\,dx\\
&\quad+C\Big(\int_{B_R}\Big|\frac{\delta_h u}{|h|^\frac{1+\theta\alpha}{\alpha}}\Big|^\frac{q\alpha}{q-p+2}\,dx+1\Big),
\end{split}
\end{equation}
where $C=C(N,h_0,p,\alpha)>0$. Choosing $\xi=h$ and taking supremum over $h$ for $0<|h|<h_0$ and then using \eqref{gest3} in \eqref{IJest}, it follows that
\begin{equation}\label{gfest}
\begin{split}
&\sup_{0<|h|<h_0}\int_{B_r}\Big|\frac{\delta_h^{2}u}{|h|^\frac{1+p+\theta\alpha}{\alpha-1+p}}\Big|^{\alpha-1+p}\,dx\\
&\quad\leq C\Big(\sup_{0<|h|<h_0}\int_{B_{R}}|\nabla u_h|^q\,dx+\int_{B_{R}}|\nabla u|^q\,dx+\sup_{0<|h|<h_0}\int_{B_R}\Big|\frac{\delta_h u}{|h|^\frac{1+\theta\alpha}{\alpha}}\Big|^\frac{\alpha q}{q-p+2}\,dx+1\Big),\\
&\qquad+CA\Big(\sup_{0<|h|<h_0}\int_{B_R}\Big|\frac{\delta_h u}{|h|^\frac{1+\theta\alpha}{\alpha}}\Big|^\frac{\alpha q}{q-p+2}\,dx+\int_{B_{R+4h_0}}|\nabla u|^q\,dx+1\Big)\\
&\quad\leq C\Big(\int_{B_{R+h_0}}|\nabla u|^q\,dx+\sup_{0<|h|<h_0}\int_{B_R}\Big|\frac{\delta_h u}{|h|^\frac{1+\theta\alpha}{\alpha}}\Big|^\frac{\alpha q}{q-p+2}\,dx+1\Big)\\
&\qquad+CA\Big(\sup_{0<|h|<h_0}\int_{B_R}\Big|\frac{\delta_h u}{|h|^\frac{1+\theta\alpha}{\alpha}}\Big|^\frac{\alpha q}{q-p+2}\,dx+\int_{B_{R+4h_0}}|\nabla u|^q\,dx+1\Big),
\end{split}
\end{equation}
where $C=C(N,h_0,p,q,s,\alpha)>0$.\\
\textbf{Step 5: Conclusion.} Now we set,
\begin{align*}
\alpha=q-p+2,\quad \theta=\frac{q-p+1}{q-p+2}.
\end{align*}
Therefore, we obtain
\begin{align*}
\frac{1+p+\theta\alpha}{\alpha-1+p}=\frac{1}{q+1}+1,\quad \alpha-1+p=q+1,\quad \frac{q\alpha}{q-p+2}=q,\quad\frac{1+\theta\alpha}{\alpha}=1.
\end{align*}
Plugging these values in \eqref{gfest}, we finally deduce that
\begin{equation}\label{gfest1}
\begin{split}
\sup_{0<|h|<h_0}\Bigg\|\frac{\delta_h^{2}u}{|h|^{\frac{1}{q+1}+1}}\Bigg\|^{q+1}_{L^{q+1}(B_r)}&\leq C\Big(\int_{B_{R+h_0}}|\nabla u|^q\,dx+\sup_{0<|h|<h_0}\Bigg\|\frac{\delta_h u}{|h|}\Bigg\|^{q}_{L^q(B_{R})}+1\Big)\\
&\quad+CA\Big(\sup_{0<|h|<h_0}\Bigg\|\frac{\delta_h u}{|h|}\Bigg\|^{q}_{L^q(B_{R})}+\int_{B_{R+4h_0}}|\nabla u|^q\,dx+1\Big),
\end{split}
\end{equation}
where $C=C(N,h_0,p,q,s)>0$. In particular, recalling that $r=R-4h_0$ and using Theorem 3 on page 277 in \cite{Evans} to estimate the difference quotients, \eqref{gfest1} gives
\begin{equation}\label{gfest2}
\begin{split}
\sup_{0<|h|<h_0}\Bigg\|\frac{\delta_h^{2}u}{|h|^{1+\frac{1}{q+1}}}\Bigg\|^{q+1}_{L^{q+1}(B_{R-4h_0})}&\leq C(1+A)\Bigg(\int_{B_{R+4h_0}}|\nabla u|^q\,dx+1\Bigg),
\end{split}
\end{equation}
where $C=C(N,h_0,p,q,s)>0$.
\end{proof}
\begin{Lemma}\label{locsemi}(Estimate of the local seminorm) Let $2\leq p<\infty,\,0<s<1$ and $0\leq A\leq 1$. Suppose $u\in W^{1,p}_{\mathrm{loc}}(B_2(x_0))\cap L^{p-1}_{sp}(\mathbb{R}^N)$ is a weak solution of 
$$
-\Delta_p u+A(-\Delta_p)^s u =0 \text{ in $B_2(x_0)$}
$$
satisfying
$$
\|u\|_{L^\infty(B_1(x_0))}\leq 1,\qquad \int_{\mathbb{R}^N\setminus B_1(x_0)}\frac{|u|^{p-1}}{|x|^{N+s\,p}}\, dx\leq 1. 
$$
Then 
$$
\int_{B_\frac{7}{8}(x_0)}|\nabla u|^p dx\leq C(N,p,s).
$$
\end{Lemma}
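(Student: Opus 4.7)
\medskip

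\noindent\textbf{Proof proposal.} The plan is to derive a Caccioppoli-type energy estimate by testing the weak formulation with $\phi = u\eta^p$, where $\eta \in C_c^\infty(B_{15/16}(x_0))$ satisfies $\eta \equiv 1$ on $B_{7/8}(x_0)$, $0 \le \eta \le 1$, and $|\nabla \eta| \le C(N)$. Since $u \in W^{1,p}_{\mathrm{loc}}(B_2(x_0))$, we have $\phi \in W^{1,p}_0(B_{15/16}(x_0))$, which is an admissible test function in Definition \ref{subsupsolution}. Expanding $\nabla(u\eta^p) = \eta^p \nabla u + p u \eta^{p-1} \nabla \eta$ and absorbing the cross-term via Young's inequality, I obtain
\[
\tfrac{1}{2}\int |\nabla u|^p \eta^p \, dx \le C \int |u|^p |\nabla \eta|^p \, dx + A\,(-\mathcal{N})^+,
\]
where $\mathcal{N}$ is the nonlocal double integral. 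The first term on the right is bounded by a dimensional constant thanks to $\|u\|_{L^\infty(B_1(x_0))} \le 1$, so the only remaining task is to prove $(-\mathcal{N})^+ \le C(N,p,s)$.

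Since $\eta$ vanishes outside $B_1(x_0)$, I split $\mathcal{N} = \mathcal{N}_{\mathrm{in}} + 2\mathcal{N}_{\mathrm{out}}$ with $\mathcal{N}_{\mathrm{in}}$ integrated over $B_1(x_0) \times B_1(x_0)$ and $\mathcal{N}_{\mathrm{out}}$ over $B_1(x_0) \times (\mathbb{R}^N \setminus B_1(x_0))$ (the outer-outer region contributes nothing). For $\mathcal{N}_{\mathrm{in}}$, I use the standard symmetrization
\[
J_p(a-b)(a\phi_1 - b\phi_2) = \tfrac{1}{2}|a-b|^p(\phi_1 + \phi_2) + \tfrac{1}{2}J_p(a-b)(a+b)(\phi_1 - \phi_2),
\]
with $\phi_1=\eta^p(x)$, $\phi_2=\eta^p(y)$. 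The first part is nonnegative and may be dropped. For the second, I exploit $\|u\|_{L^\infty(B_1(x_0))} \le 1$, the pointwise bound $|\eta^p(x)-\eta^p(y)| \le p(\eta^{p-1}(x)+\eta^{p-1}(y))|\eta(x)-\eta(y)|$, and Young's inequality to obtain
\[
|u(x)-u(y)|^{p-1}|\eta^p(x)-\eta^p(y)| \le \varepsilon\,|u(x)-u(y)|^p(\eta^p(x)+\eta^p(y)) + C_\varepsilon\,|\eta(x)-\eta(y)|^p.
\]
The first summand is reabsorbed into the nonnegative leading term, and the Lipschitz bound $|\eta(x)-\eta(y)|\le C|x-y|$ reduces the second to $\iint_{B_1\times B_1} |x-y|^{p(1-s)-N}\, dx\, dy < \infty$, since $p(1-s)>0$. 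This yields $\mathcal{N}_{\mathrm{in}} \ge -C(N,p,s)$.

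For $\mathcal{N}_{\mathrm{out}}$, where $\eta(y)=0$, I use $|J_p(u(x)-u(y))| \le 2^{p-1}(|u(x)|^{p-1}+|u(y)|^{p-1})$. The contribution from $|u(x)|^{p-1}|u(x)|\eta^p(x) \le 1$ reduces to $\int_{B_1(x_0)} \eta^p(x) \int_{\mathbb{R}^N \setminus B_1(x_0)} |x-y|^{-N-sp} \, dy\, dx$, which is finite because $x \in \mathrm{supp}(\eta) \subset B_{15/16}(x_0)$ gives $|x-y| \ge \tfrac{1}{16}$ and $|x-y| \gtrsim |y-x_0|$ for large $|y-x_0|$. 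The contribution from $|u(y)|^{p-1}$ is controlled by the same lower bound on $|x-y|$ together with the tail hypothesis $\int_{\mathbb{R}^N\setminus B_1(x_0)}|u(y)|^{p-1}|y-x_0|^{-N-sp}dy \le 1$. Thus $|\mathcal{N}_{\mathrm{out}}| \le C(N,p,s)$, which combined with the previous bound on $\mathcal{N}_{\mathrm{in}}$ gives $\mathcal{N} \ge -C$, and hence $A(-\mathcal{N})^+ \le C$. The Caccioppoli estimate then yields the stated bound.

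\medskip

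\noindent The main obstacle I anticipate is organizing the error in $\mathcal{N}_{\mathrm{in}}$: one must produce the weight $\eta^p(x)+\eta^p(y)$ in the term to be absorbed, which is precisely why the factorization $|\eta^p(x)-\eta^p(y)| \lesssim (\eta^{p-1}(x)+\eta^{p-1}(y))|\eta(x)-\eta(y)|$ is essential rather than the cruder Lipschitz bound.
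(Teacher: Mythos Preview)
Your proof is correct and follows the same underlying strategy as the paper: a Caccioppoli-type energy estimate obtained by testing the equation with $u\eta^p$ (or a close variant), then controlling the nonlocal remainder via the $L^\infty$ bound on $B_1$ and the tail hypothesis. The only difference is in presentation: the paper invokes \cite[Lemma~3.1]{GK}, a ready-made Caccioppoli inequality for this mixed operator, and applies it separately to $w=u^+$ and $w=u^-$ (since that lemma is formulated for subsolutions), whereas you derive the estimate from scratch and work directly with $u$. Your direct derivation is self-contained and slightly cleaner in that it avoids the positive/negative split; the paper's version is shorter because the bookkeeping for $\mathcal N_{\mathrm{in}}$ and $\mathcal N_{\mathrm{out}}$ is hidden in the cited lemma.
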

\begin{proof} Without loss of generality, we assume $x_0=0$. We only provide the proof for $w=u^+$, the proof of $u^-$ is similar. We apply \cite[Lemma 3.1]{GK} with $r=1$, $x_0=0$ and with $\psi\in C_0^\infty(B_\frac89)$ such that $\psi =1$ on $B_\frac78$, $0\leq \psi \leq 1$ and $|\nabla \psi| \leq C$ for some $C=C(N)>0$. By using the properties of $\psi$ and $A\in[0,1]$ this yields
\[
\begin{split}
\int_{B_\frac78}|\nabla w|^p dx &\leq  C(N,p)\left(\int_{B_1}|w|^p dx+\int_{B_1}\int_{B_1}\left(|w(x)|^p+|w(y)|^p\right)|x-y|^{1-sp-N} dx dy\right)
\\&+C(N,p)\int_{\R^N\setminus B_1}\frac{|w(y)|^{p-1}}{|y|^{N+sp}} dy\int_{B_1} |w (x)| dx\\
&\leq C(N,p)(1+C(p,s)+1).
\end{split}
\]
Hence the result follows.
\end{proof}
We are now ready to prove Theorem \ref{teo:1}.
\begin{proof}[~Proof of Theorem \ref{teo:1}]
We first observe that $u\in L^\infty_{\rm loc}(\Omega)$, by \cite[Theorem 4.2]{GK}. We assume for simplicity that $x_0=0$, then we set 
\[
\mathcal{M}_R=\|u\|_{L^\infty(B_{R})}+\mathrm{Tail}_{p-1,s\,p,s\,p}(u;0,R)>0.
\]
We point out that it is sufficient to prove that the rescaled function
\begin{equation}
\label{eq:uR}
u_R(x):=\frac{1}{\mathcal{M}_R}\,u(R\,x),\qquad \mbox{ for }x\in B_2,
\end{equation}
satisfy the estimate
\[
[u_R]_{C^{\delta}(B_{1/2})}\leq C.
\]
By scaling back, we would get the desired estimate. Observe that by definition, the function $u_R$ 
is a local weak solution of $-\Delta_p u+A\,R^{p-ps}(-\Delta_p)^s u=0$ in $B_2$ and satisfies
\begin{equation}
\label{assumption}
\|u_R\|_{L^\infty(B_1)}\leq 1,\qquad \int_{\mathbb{R}^N\setminus B_1}\frac{|u_R(y)|^{p-1}}{|y|^{N+s\,p}}\,  dy\leq 1,\qquad [u_R]_{W^{1,p}(B_\frac78)}\leq C(N,p,s).
\end{equation}
The last estimate follows from Lemma \ref{locsemi}.
In what follows, we will omit the subscript $R$ and simply write $u$ in place of $u_R$, in order not to overburden the presentation.
\vskip.2cm\noindent
We fix $0<\delta<1$ and choose $i_\infty\in\mathbb{N}\setminus\{0\}$ such that
\[
1-\delta> \frac{N}{p+i_\infty}.
\]
Then we define the sequence of exponents
\[
q_i=p+i,\qquad i=0,\dots,i_\infty.
\]
We define also	
$$
h_0=\frac{1}{112\,i_\infty},\qquad R_i=\frac{7}{8}-4\,h_0-14\,h_0i,\qquad \mbox{ for } i=0,\dots,i_\infty.
$$
We note that 
\[
R_0+4\,h_0=\frac{7}{8}\qquad\mbox{ and }\qquad R_{i_\infty}+4\,h_0=\frac{3}{4}.
\] 
By applying Proposition \ref{prop1} with\footnote{We observe that by construction we have
\[
4\,h_0<R_i\le 1-5\,h_0,\qquad \mbox{ for } i=0,\dots,i_\infty.
\]
Thus these choices are admissible in Proposition \ref{prop1}.} 
\[
R=R_i\qquad \mbox{ and }\qquad q=q_i=p+i,\qquad \mbox{ for } i=0,\ldots,i_\infty,
\] 
and by \eqref{assumption} along with $A\in [0,1]$ and $R\in (0,1)$, we obtain 
\begin{equation}\label{it1}
\begin{split}
\sup\limits_{0<|h|< h_0}\left\|\dfrac{\delta^2_h u}{|h|^{1+\frac{1}{q_1}}}\right\|^{q_1}_{L^{q_1}(B_{R_0-4h_0})}&\leq C\,\left([u]_{W^{1,p}(B_{\frac{7}{8}})}^{p}+1\right)\leq C(N,p,s,\delta).
\end{split}
\end{equation}
Noting that $R_i-10 h_0=R_{i+1}+4h_0$ for every $i=0,1,\ldots,i_{\infty}-1$ and using Lemma \ref{2.4} in \eqref{it1}, we get
\begin{equation}\label{it1app}
\begin{split}
[u]_{W^{1,q_1}(B_{R_1+4h_0})}^{q_1}&\leq C(N,p,s,\delta).
\end{split}
\end{equation}
Again, by Proposition \ref{prop1} and applying \eqref{it1app}, we obtain
\begin{equation}\label{ite2}
\begin{split}
\sup\limits_{0<|h|< h_0}\left\|\dfrac{\delta^2_h u}{|h|^{1+\frac{1}{q_2}}}\right\|^{q_2}_{L^{q_2}(B_{R_{1}-4h_0})}&\leq C\,\left([u]_{W^{1,q_1}(B_{R_1+4h_0})}^{q_1}+1\right)\leq C(N,p,s,\delta).
\end{split}
\end{equation}
Further, using Lemma \ref{2.4} in \eqref{ite2}, we get
\begin{equation}\label{it2app}
\begin{split}
[u]_{W^{1,q_2}(B_{R_2+4h_0})}^{q_2}&\leq C(N,p,s,\delta).
\end{split}
\end{equation}
Repeating this procedure, we  obtain the iteration scheme
\begin{equation}\label{ite3}
\begin{split}
[u]^{q_{i+1}}_{W^{1,q_{i+1}}(B_{R_{i+1} +4h_0})}&\leq C(N,p,s,\delta),
\end{split}
\end{equation}
for all $i=0,1,\ldots,i_\infty-1$. Choosing $i=i_{\infty}-1$ in \eqref{ite3} and using the facts that $\|u\|_{L^{\infty}(B_1)}\leq 1,\,\,[u]_{W^{1,p}(B_1)}\leq 1$, we obtain 
$$
\|u\|_{{W^{1,q_{i_\infty}}(B_{R_{i_\infty}+4h_0})}}\leq C
$$
for $C=C(N,p,s,\delta)>0$. Since $q_{i_\infty}>N$ and $R_{i_\infty}+4h_0=\frac{3}{4}$, by Morrey's embedding theorem, we get $u\in C^{\delta}_{\mathrm{loc}}(B_{\frac{3}{4}})$ and
$$
[u]_{C^{\delta}(B_{\frac{1}{2}})}\leq C\|u\|_{{W^{1,q_{i_\infty}}(B_{R_{i_\infty}+4h_0})}}\leq C
$$
for $C=C(N,p,s,\delta)>0$. Since $\delta\in(0,1)$ is arbitrary, the result follows.
\end{proof}
Since the result above implies that the nonlocal term is bounded when $sp<(p-1)$, we can finally give the proof of Corollary \ref{corc1}.

\begin{proof}[Proof of Corollary \ref{corc1}.]
Upon rescaling as in the proof of Theorem \ref{teo:1}, it is sufficient to prove that $\|u_R\|_{C^{1,\alpha}(B_\frac18)}\leq C$ with $u_R$ as defined in \eqref{eq:uR} satisfying 
\begin{equation}
\label{assumption2}
\|u_R\|_{L^\infty(B_1)}\leq 1,\qquad \int_{\mathbb{R}^N\setminus B_1}\frac{|u_R(y)|^{p-1}}{|y|^{N+s\,p}}\,  dy\leq 1.
\end{equation}
Theorem \ref{teo:1} implies that there is $\delta>sp/(p-1)$ such that
\[
[u_R]_{C^{\delta}(B_{1/2})}\leq C(N,s,p,\delta).
\]
Now take any $x_0\in B_{1/4}$. Then 
$$
\int_{B_\frac14(x_0)}\frac{|u_R(x_0)-u_R(y)|^{p-1}}{|x_0-y|^{N+sp}} dy\leq C\int_{B_\frac14(x_0)}|x_0-y|^{N+sp-\delta(p-1)} dy=C(N,s,p,\delta),
$$
by the choice of $\delta$. Moreover, 
$$
\int_{\R^N\setminus B_\frac14(x_0)}\frac{|u_R(x_0)-u_R(y)|^{p-1}}{|x_0-y|^{N+sp}} dy\leq C(N,s,p),
$$
by \eqref{assumption2}. Hence, $\|(-\Delta_p)^s u_R\|_{L^\infty(B_{1/4})}\leq C(N,s,p,\delta)$ and therefore also $\|\Delta_p u_R\|_{L^\infty(B_{1/4})}\leq C(N,s,p,\delta)$ which together with \eqref{assumption2} and the well known $C^{1,\alpha}$-estimates for the $p$-Laplacian (see for instance the corollary on page 830 in \cite{DiB}) imply
$$
\|u_R\|_{C^{1,\alpha}(B_\frac18)}\leq C(N,s,p,\delta).
$$
\end{proof}
\normalcolor
\section{Regularity for the inhomogeneous equation} \label{sec:inhomo}
In this section, we prove the boundedness and the regularity for the inhomogeneous equation.

\subsection{Boundedness}
We now address the boundedness, by comparing with the homogeneous equation. The first one is a consequence of Sobolev's inequality.
\begin{Lemma}\label{lem:w1p} Let $2\leq p<\infty,\,0<s<1$ and $A\geq 0$. Suppose $f\in L^q(\Omega)$ for $q>N/p$ if $p\leq N$ and $q\geq 1$ otherwise. Assume that $u\in W^{1,p}_{\mathrm{loc}}(\Omega)\cap L^{p-1}_{sp}(\mathbb{R}^N)$ is a weak subsolution of
$$
-\Delta_p u +A(-\Delta_p)^s u =f \text{ in $\Omega$}
$$
such that $B_r(x_0)\Subset \Omega$ and that $v\in W^{1,p}_{u}(B_r(x_0))$ solves
$$
\begin{cases}
-\Delta_p v +A(-\Delta_p)^s v =0 &\text{ in $B_r(x_0)$},\\
v = u & \text{ in $\R^N\setminus B_r(x_0)$}.
\end{cases}
$$
Then
\begin{equation}\label{bd1}
\|(u-v)^+\|_{L^{q'}(B_r(x_0))}\leq 2^\frac{p-2}{p-1}\left(\|f\|_{L^q(B_r(x_0))}S_{N,p}\right)^\frac{1}{(p-1)}|B_r(x_0)|^{\frac{p}{p-1}(\frac{1}{q'}-\frac{1}{p}+\frac{1}{N})},
\end{equation}

\begin{equation}\label{bd2}
\|\nabla (u-v)^+\|_{L^p(B_r(x_0))}\leq 2^\frac{p-2}{p-1}\|f\|_{L^q(B_r(x_0))}^\frac{1}{p-1}S_{N,p}^\frac{1}{p(p-1)}|B_r(x_0)|^{\frac{1}{p-1}(\frac{1}{q'}-\frac{1}{p}+\frac{1}{N})},
\end{equation}
and
\begin{equation}\label{bd3}
\|(u-v)^+\|_{L^p(B_r(x_0))}\leq 2^\frac{p-2}{p-1}C(N,p)\|f\|_{L^q(B_r(x_0))}^\frac{1}{p-1}S_{N,p}^\frac{1}{p(p-1)}|B_r(x_0)|^{\frac{1}{p-1}(\frac{1}{q'}-\frac{1}{p}+\frac{1}{N}){+\frac{1}{N}}}.
\end{equation}
Here, $S_{N,p}$ is the constant in the Sobolev embedding in $W^{1,p}$.
\end{Lemma}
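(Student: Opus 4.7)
The natural plan is to test both inequalities with $\varphi=(u-v)^+$ and subtract. Since $u=v$ in $\mathbb{R}^N\setminus B_r(x_0)$, we have $\varphi\in W_0^{1,p}(B_r(x_0))$, which makes it an admissible test function in the definition of weak subsolution of $u$ and in the weak formulation of the equation for $v$. Subtracting, we obtain
\[
\begin{split}
&\int_{B_r(x_0)}\bigl(|\nabla u|^{p-2}\nabla u-|\nabla v|^{p-2}\nabla v\bigr)\cdot\nabla\varphi\,dx\\
&\qquad+A\iint_{\mathbb{R}^N\times\mathbb{R}^N}\bigl(J_p(u(x)-u(y))-J_p(v(x)-v(y))\bigr)\bigl(\varphi(x)-\varphi(y)\bigr)\,d\mu\le \int_{B_r(x_0)} f\,\varphi\,dx.
\end{split}
\]

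The next step is to discard the nonlocal term by showing it is nonnegative. Writing $w=u-v$, the sign of $\varphi(x)-\varphi(y)=w^+(x)-w^+(y)$ coincides with that of $w(x)-w(y)=(u(x)-u(y))-(v(x)-v(y))$ (with equality to zero on the set where both $w(x),w(y)\le0$); since $J_p$ is monotone, $J_p(u(x)-u(y))-J_p(v(x)-v(y))$ has the same sign as $w(x)-w(y)$. Hence the nonlocal integrand is pointwise nonnegative and may be dropped. For the local term, the pointwise inequality for $p\ge 2$ from Lemma~\ref{lem:pineq} (in the version $\langle|a|^{p-2}a-|b|^{p-2}b,a-b\rangle\ge 2^{2-p}|a-b|^p$, applied on $\{u>v\}$ where $\nabla\varphi=\nabla(u-v)$) gives
\[
2^{2-p}\int_{B_r(x_0)}|\nabla(u-v)^+|^p\,dx\le \int_{B_r(x_0)} f\,(u-v)^+\,dx.
\]

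To finish, I would bound the right-hand side with H\"older's inequality by $\|f\|_{L^q(B_r(x_0))}\,\|(u-v)^+\|_{L^{q'}(B_r(x_0))}$, and then use the Sobolev embedding $\|(u-v)^+\|_{L^{p^*}(B_r(x_0))}^p\le S_{N,p}\|\nabla(u-v)^+\|_{L^p(B_r(x_0))}^p$ together with H\"older's inequality in the form $\|(u-v)^+\|_{L^{q'}}\le|B_r(x_0)|^{1/q'-1/p^*}\|(u-v)^+\|_{L^{p^*}}$. The hypothesis $q>N/p$ ensures $q'<p^*$, so this interpolation is legitimate; computing $1/q'-1/p^*=1/q'-1/p+1/N$ and solving the resulting inequality $\|\nabla(u-v)^+\|_{L^p}^{p-1}\lesssim \|f\|_{L^q}|B_r|^{1/q'-1/p+1/N}S_{N,p}^{1/p}$ for $\|\nabla(u-v)^+\|_{L^p}$ yields \eqref{bd2}. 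Estimate \eqref{bd1} then follows by feeding \eqref{bd2} back into the Sobolev--H\"older chain, while \eqref{bd3} follows in the same way, but using Poincar\'e's inequality $\|(u-v)^+\|_{L^p(B_r(x_0))}\le C(N,p)\,|B_r(x_0)|^{1/N}\,\|\nabla(u-v)^+\|_{L^p(B_r(x_0))}$ instead of Sobolev for the final step, with the constant $C(N,p)$ absorbing the Poincar\'e constant. The cases $p=N$ and $p>N$ can be handled by slightly modifying the Sobolev exponent (any $p^*<\infty$ works when $p=N$, and Morrey's embedding replaces Sobolev when $p>N$).

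The only genuine subtlety is the sign of the nonlocal term; the local part and the subsequent H\"older--Sobolev bookkeeping are standard. Once the monotonicity-based sign argument above is in place, everything else is just tracking exponents of $|B_r(x_0)|$ and of $S_{N,p}$ carefully through the chain of inequalities.
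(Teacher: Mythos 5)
Your proposal is correct and follows essentially the same route as the paper: test the difference of the equations with $(u-v)^+$, drop the nonlocal term by sign considerations, apply Lemma~\ref{lem:pineq} to the local term, and then close the argument with H\"older's inequality and the Sobolev embedding, finishing \eqref{bd3} via Poincar\'e. The only difference is cosmetic: you spell out the sign argument for the nonlocal term pointwise (the paper refers to ``Lemma~\ref{lem:pineq} and some manipulations''), which is a clean and correct way to see it.
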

\begin{proof}
By Sobolev's inequality and H\"older's inequality we have
\begin{equation}\label{bd4}
\|(u-v)^+\|_{L^{q'}(B_r(x_0))}\leq \left(S_{N,p}\right)^\frac{1}{p}\|\nabla (u-v)\|_{L^p(B_r(x_0))}|B_r(x_0)|^{\frac{1}{q'}-\frac{1}{p}+\frac{1}{N}}.
\end{equation}
We test the difference of the equations for $u$ and $v$ with $(u-v)^+$ and observe that by Lemma \ref{lem:pineq} and some manipulations, we have
$$
\int_{\mathbb{R}^N}\int_{\mathbb{R}^N}(J_p(u(x)-u(y))-J_p(v(x)-v(y))){((u-v)^+(x)-(u-v)^+(y))}\,d\mu\geq 0.
$$
Therefore, we may throw away the nonlocal term. We obtain from H\"older's inequality and Lemma \ref{lem:pineq}
\begin{equation}\label{bd5}
2^{2-p}\int_{B_r(x_0)}|\nabla (u-v)^+|^p dx\leq \|f\|_{L^q(B_r(x_0))}\|(u-v)^+\|_{L^{q'}(B_r(x_0))}.
\end{equation}
The two inequalities \eqref{bd4} and \eqref{bd5} together imply \eqref{bd1} and \eqref{bd2}. Finally, using Poincar\'e's inequality and \eqref{bd2}, we obtain \eqref{bd3}.
\end{proof}
We now perform a Moser iteration to obtain the boundedness.
\begin{prop}[$L^\infty$-estimate]
\label{prop:Linfty}
Let $2\leq p<\infty,\,0<s<1$ and $A\geq 0$. Suppose $f\in L^q(\Omega)$ for $q>N/p$ if $p\leq N$ and $q\geq 1$ otherwise. Assume that $u\in W^{1,p}_{\mathrm{loc}}(\Omega)\cap L^{p-1}_{sp}(\mathbb{R}^N)$ is a weak subsolution of
\begin{equation}\label{lsub}
-\Delta_p u +A(-\Delta_p)^s u =f \text{ in $\Omega$}
\end{equation}
such that $B_r(x_0)\Subset \Omega$ and that $v$ solves
\begin{equation}\label{nsub}
\begin{cases}
-\Delta_p v +A(-\Delta_p)^s v =0 &\text{ in $B_r(x_0)$},\\
v = u & \text{ in $\R^N\setminus B_r(x_0)$}.
\end{cases}
\end{equation}
Then $(u-v)^+\in L^\infty(B_r(x_0))$, with the following estimate
\[
\|(u-v)^+\|_{L^\infty(B_r(x_0))}\le C(N,p,q) \left(|B_r(x_0)|^{\frac{p}{N}-\frac{1}{q}}\|f\|_{L^q(B_r(x_0))}\right)^\frac{1}{p-1}.
\]
\end{prop}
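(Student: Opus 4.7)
The plan is to run a Moser iteration on $w := (u-v)^+$. Since $u = v$ in $\mathbb{R}^N \setminus B_r(x_0)$, we have $w \in W_0^{1,p}(B_r(x_0))$, so after truncating $w_M := \min(w,M)$ and eventually passing to the limit $M\to\infty$ (justified by the $L^p$-integrability provided by \eqref{bd3}), the function $\varphi := w^\beta$ is admissible as a test function for every $\beta\ge 1$ in the equation obtained by subtracting \eqref{nsub} from the subsolution inequality \eqref{lsub}. This yields
\[
\int_{B_r(x_0)} \bigl(|\nabla u|^{p-2}\nabla u - |\nabla v|^{p-2}\nabla v\bigr)\cdot\nabla(w^\beta)\,dx \;+\; A\,\mathcal{N}(\varphi) \;\le\; \int_{B_r(x_0)} f\, w^\beta\, dx,
\]
where $\mathcal{N}(\varphi)$ is the nonlocal difference term.

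The first step is to discard $\mathcal{N}(\varphi)$ by showing that its integrand is pointwise non-negative: whenever $(u-v)(x)\ge (u-v)(y)$, the monotonicity of $J_p$ gives $J_p(u_x-u_y)\ge J_p(v_x-v_y)$, while the map $t\mapsto (t^+)^\beta$ being non-decreasing yields $w^\beta(x)\ge w^\beta(y)$ (the reverse case being analogous). For the local part, Lemma \ref{lem:pineq} combined with the chain rule and the substitution $\gamma := (\beta+p-1)/p$ gives
\[
\frac{c_p\,\beta\,p^p}{(\beta+p-1)^p}\int_{B_r(x_0)} |\nabla w^\gamma|^p\,dx \;\le\; \int_{B_r(x_0)} f\, w^\beta\, dx.
\]
The Sobolev embedding $W_0^{1,p}(B_r)\hookrightarrow L^{p^*}(B_r)$ on the left (with the obvious adjustments when $p\ge N$, replacing $p^*$ by a sufficiently large finite exponent) and Hölder's inequality on the right produce the self-improving estimate
\[
\|w\|_{L^{\gamma p^*}(B_r)}^{\gamma p} \;\le\; C(p,\beta)\,|B_r(x_0)|^{p/N}\,\|f\|_{L^q(B_r(x_0))}\,\|w\|_{L^{\beta q'}(B_r(x_0))}^{\beta}.
\]

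I would then iterate with $\beta_0 = 1$ and $\beta_{k+1} q' = \gamma_k p^*$, starting the ladder from the $L^{q'}$-control of $w$ furnished by \eqref{bd1}. Under the standing assumption $q > N/p$, the ratio $p^*/(p\,q')$ strictly exceeds $1$, hence $\beta_k\to\infty$ geometrically; the products of $\mu_k := \beta_k/(\beta_k+p-1)$ and of the accumulated constants $C(p,\beta_k)^{1/(\gamma_k p)}$ both converge, and telescoping leads in the limit to
\[
\|w\|_{L^\infty(B_r(x_0))} \;\le\; C\,\bigl(|B_r(x_0)|^{p/N}\|f\|_{L^q(B_r(x_0))}\bigr)^{(1-M_\infty)/(p-1)}\,\|w\|_{L^{q'}(B_r(x_0))}^{M_\infty},
\]
where $M_\infty := \prod_k \mu_k \in (0,1)$ and the exponent on the first factor is forced by the scaling $(w,f)\mapsto(\lambda w, \lambda^{p-1}f)$. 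Inserting \eqref{bd1} to absorb $\|w\|_{L^{q'}}^{M_\infty}$ into a power of $\|f\|_{L^q}$, the two $\|f\|_{L^q}$-exponents add to exactly $1/(p-1)$, and a dimensional check (or an explicit summation of the telescoped exponents) fixes the resulting $|B_r(x_0)|$-power at $(p/N - 1/q)/(p-1)$, yielding the claim. The main obstacle I anticipate is precisely this bookkeeping of exponents along the Moser ladder together with the convergence of $\prod_k C(p,\beta_k)^{1/(\gamma_k p)}$; the sign of the nonlocal term and the truncation argument are technical but standard.
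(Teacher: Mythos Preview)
Your plan is correct and follows the same route as the paper: subtract the two equations, test with powers of $w=(u-v)^+$, discard the nonlocal term by monotonicity, use Lemma~\ref{lem:pineq} on the local part, Moser-iterate via Sobolev, and close with \eqref{bd1}. The one technical difference worth noting is that the paper (following \cite{BP16}) tests with $\varphi=(w+\delta)^\beta-\delta^\beta$ for a parameter $\delta>0$ chosen so that $\delta^{p-1}\sim |B_r|^{p/N-1/q}\|f\|_{L^q}$; this turns the recursion into the purely multiplicative form $\|w+\delta\|_{L^{\chi\gamma}}\le C_\gamma\,\|w+\delta\|_{L^\gamma}$, so the telescoping is a single geometric series and the final exponents drop out directly, without your infinite product $M_\infty=\prod_k\mu_k$ or the appeal to scaling. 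Your direct choice $\varphi=w^\beta$ works as well, but the $\delta$-shift is exactly the device that removes the exponent bookkeeping you flag as the main obstacle.
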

\begin{proof} For simplicity, we assume that $x_0=0$. We follow closely the proof of Theorem 3.1 in \cite{BP16}. We first note that if $p>N$ then by Morrey's inequality
$$
\|(u-v)^+\|_{L^\infty(B_r)}\le C(N,p)|B_r|^{\frac{1}{N}-\frac{1}{p}}\|\nabla (u-v)^+\|_{L^p(B_r)}.
$$
This together with Lemma \ref{lem:w1p} implies
$$
\|(u-v)^+\|_{L^\infty(B_r)}\le C(N,p)\|f\|^\frac{1}{p-1}_{L^q(B_r)}|B_r|^{\frac{1}{p-1}(\frac{1}{q'}-\frac{1}{p}+\frac{1}{N})}|B_r|^{\frac{1}{N}-\frac{1}{p}}
$$
which is the desired result.

We now prove the result for the positive part of $u-v$ in the case $p< N$ and then comment on how the proof would be changed if $p=N$. Let $w=(u-v)^+$,  $\delta>0$ and $\beta> 1$. We observe that $u+\delta$ is again a weak subsolution of \eqref{lsub}. Insert the test function\footnote{This function is not really admissible but it can be made rigorous by instead taking $\min(w,M)$ for some $M>0$ and then letting $M\to \infty$.}
\[
\varphi=(w+\delta)^{\beta}-\delta^\beta
\]
in the difference of the equations for $u+\delta$ and $v$. The part coming from the nonlocal part will be non-negative. Indeed, this part is given by
\[
\int_{\mathbb{R}^N}\int_{\mathbb{R}^N}\left(J_p(u(x)+\delta-(u(y)+\delta))-J_p(v(x)-v(y)))\right){((w+\delta)^\beta(x)-(w+\delta)^\beta(y))}\,d\mu\geq 0.
\]
by Lemma \ref{lem:powertest} and some manipulations. For the local term we will apply
 Lemma \ref{lem:pineq}. This gives
\[
2^{2-p}\frac{\beta\,p^p}{(\beta+p-1)^p}\,\int_{B_r}\left|\nabla (w+\delta)^\frac{\beta+p-1}{p}\right|^p\,dx\,\leq\int_{B_r} f \varphi\, dx \le \|f\|_{L^q(B_r)} \|(w+\delta)^{\beta}\|_{L^{q'}(B_r)}.
\]
By observing that for every $\beta\ge 1$ we have
\[
\left(\frac{\beta+p-1}{p}\right)^p\,\frac{1}{\beta}\le \left(\beta\right)^{p-1},
\]
we can rewrite the previous estimate as
\[
\int_{B_r}\left|\nabla (w+\delta)^\frac{\beta+p-1}{p}\right|^p\,dx\le 2^{p-2}\left(\beta\right)^{p-1}\,\|f\|_{L^q(B_r)} \|(w+\delta)^{\beta}\|_{L^{q'}(B_r)}.
\]
With $\vartheta=(\beta+p-1)/p$, the previous inequality is equivalent to
\begin{equation}
\label{moser}
\int_{B_r}\left|\nabla (w+\delta)^\vartheta\right|^p\,dx\le 2^{p-2}\beta^{p-1}\,\|f\|_{L^q(B_r)} \|(w+\delta)^{\beta}\|_{L^{q'}{(B_r)}}.
\end{equation}
We now proceed using the Sobolev inequality:
\[
\left(\int_\Omega |\varphi|^{p^*}\,dx\right)^\frac{p}{p^*}\le S_{N,p} \int_\Omega |\nabla \varphi|^p\,dx,\qquad \mbox{ for every } \varphi\in W^{1,p}_0(\Omega),
\]
where $p^*=(N\,p)/(N-p)$.
By using this inequality in the left-hand side of \eqref{moser}, we get
\[
\,\left(\int_{B_r} \big |(w+\delta)^{\vartheta}-\delta^\vartheta\big|^{p^*}\,dx\right)^\frac{p}{p^*}\le S_{N,p}2^{p-2}\beta^{p-1}\|f\|_{L^q(B_r)} \|(w+\delta)^{\beta}\|_{L^{q'}{(B_r)}}
\]
and thus
$$
\|{(w+\delta)}^{\vartheta}-\delta^\vartheta\|_{L^{p^*}(B_r)}\leq \left(2^{p-2}\beta^{p-1}\|f\|_{L^q(B_r)}S_{N,p}\right)^\frac{1}{p} \|(w+\delta)^{\beta}\|^\frac{1}{p}_{L^{q'}(B_r)}.
$$
By the triangle inequality
\[
\begin{split}
&\|(w+\delta)^\beta-\delta^\vartheta\|_{L^{p^*}(B_r)}\geq\|\delta^{\frac{p-1}{p}}((w+\delta)^\frac{\beta}{p}-\delta^\frac{\beta}{p})\|_{L^{p^*}(B_r)}
\\
&\geq \delta^{\frac{p-1}{p}}\|((w+\delta)^\frac{\beta}{p}\|_{L^{p^*}(B_r)}-\delta^\vartheta|B_r|^\frac{1}{p^*}.
\end{split}
\]
Therefore, since $\vartheta=(\beta+p-1)/p$, we obtain
$$
\|(w+\delta)^\frac{\beta}{p}\|_{L^{p^*}(B_r)}\leq \frac{1}{\delta^\frac{p-1}{p}}\left(2^{p-2}\beta^{p-1}\|f\|_{L^q(B_r)}S_{N,p}\right)^\frac{1}{p} \|(w+\delta)^{\beta}\|^\frac{1}{p}_{L^{q'}(B_r)}+\delta^{\frac{\beta}{p}}|B_r|^\frac{1}{p^*}.
$$
Using that $\beta\geq 1$ we also have
$$
\delta^\beta = \|\delta^\beta\|_{L^{q'}(B_r)}|B_r|^{-\frac{1}{q'}}\leq \beta^{(p-1)}\|(w+\delta)^\beta\|_{L^{q'}(B_r)}|B_r|^{-\frac{1}{q'}}.
$$
Therefore, 
$$
\delta^{\frac{\beta}{p}}|B_r|^\frac{1}{p^*}\leq \beta^\frac{p-1}{p}\|(w+\delta)^\beta\|^\frac{1}{p}_{L^{q'}(B_r)}|B_r|^{-\frac{1}{pq'}+\frac{1}{p^*}}
$$
so that
$$
\|(w+\delta)^\frac{\beta}{p}\|_{L^{p^*}(B_r)}\leq   2^{(p-2)/p}\beta^{(p-1)/p}\|(w+\delta)^{\beta}\|^\frac{1}{p}_{L^{q'}(B_r)}\left(\frac{1}{\delta^\frac{p-1}{p}}\left(\|f\|_{L^q(B_r)}S_{N,p}\right)^\frac{1}{p}+{2^\frac{2-p}{p}}|B_r|^{-\frac{1}{pq'}+\frac{1}{p^*}}\right).
$$
Now we make the choice
$$
\delta = \left({2^{p-2}}\|f\|_{L^q(B_r)}S_{N,p}\right)^\frac{1}{p-1}|B_r|^{-\frac{p}{p-1}(-\frac{1}{pq'}+\frac{1}{p^*})}.
$$
Then we obtain the estimate
$$
\|(w+\delta)^\frac{\beta}{p}\|_{L^{p^*}(B_r)}\leq |B_r|^{\frac{1}{p^*}-\frac{1}{pq'}}\beta^{(p-1)/p}\|(w+\delta)^{\beta}\|^\frac{1}{p}_{L^{q'}(B_r)}
$$
or with the notation $\gamma = \beta q'$ and $\chi = p^*/(pq')>1$
\[
\begin{split}
\|w+\delta\|_{L^{\chi \gamma}(B_r)}&\leq   \left(|B_r|^{(\frac{p}{p^*}-\frac{1}{q'})}\right)^\frac{q'}{\gamma}\left(\frac{\gamma}{q'}\right)^{\frac{q'(p-1)}{\gamma}}\|(w+\delta)\|_{L^{\gamma}(B_r)} \\
&= \left(|B_r|^{(1-\frac{p}{N}-\frac{1}{q'})}\right)^\frac{q'}{\gamma}\left(\frac{\gamma}{q'}\right)^{\frac{q'(p-1)}{\gamma}}\|(w+\delta)\|_{L^{\gamma}(B_r)}.
\end{split}
\]
Now it is just a matter of following the exact same steps as in the proof of Theorem 3.1 in Brasco-Parini \cite{BP16} with $s=1$. Here we make the choices
$$
\gamma_0=q', \quad \gamma_n=\chi^nq'.
$$
Then
$$
{\sum_{n=0}^{\infty}} \frac{q'}{\gamma_n} = {\sum_{n=0}^{\infty}} \chi^{n} =\frac{\chi}{\chi-1}= \frac{N}{N-q'+pq'}
$$
and
$$
{\prod_{n=0}^{\infty}} \left(\frac{\gamma_n}{q'}\right)^\frac{q'}{\gamma_n} = \chi^\frac{\chi}{(\chi-1)^2}.
$$
The final estimate becomes
$$
\|w+\delta\|_{L^\infty(B_r)}\leq (C)^\frac{\chi}{\chi-1}(\chi^{p-1})^\frac{\chi}{(\chi-1)^2}\left(|B_r|^{(1-\frac{p}{N}-\frac{1}{q'})}\right)^\frac{\chi}{\chi-1}\|w+\delta\|_{L^{q'}(B_r)},
$$
for some constant $C=C(p)>0$.
Therefore
$$
\|w\|_{L^\infty(B_r)}\leq (C)^\frac{\chi}{\chi-1}(\chi^{p-1})^\frac{\chi}{(\chi-1)^2}\left(|B_r|^{(1-\frac{p}{N}-\frac{1}{q'})}\right)^\frac{\chi}{\chi-1}\left(\|w\|_{L^{q'}(B_r)}+\delta|B_r|^\frac{1}{q'}\right).
$$
By the choice of $\delta$ this becomes
$$
\|w\|_{L^\infty(B_r)}\leq (C)^\frac{\chi}{\chi-1}(\chi^{p-1})^\frac{\chi}{(\chi-1)^2}\left(|B_r|^{-\frac{1}{q'}}\|w\|_{L^{q'}(B_r)}+
\left({2^{p-2}}|B_r|^{\frac{p}{N}-\frac{1}{q}}\|f\|_{L^q(B_r)}{S_{N,p}}\right)^\frac{1}{p-1}\right).
$$
By the estimate \eqref{bd1} in Lemma \ref{lem:w1p} we obtain 
\[
\begin{split}
&\|w\|_{L^\infty(B_r)}\\
&\leq (C)^\frac{\chi}{\chi-1}(\chi^{p-1})^\frac{\chi}{(\chi-1)^2}\left(2^\frac{p-2}{p-1}\|f\|_{L^q(B_r)}^{\frac{1}{p-2}}S_{N,p}^\frac{1}{(p-1)}|B_r|^{\frac{p}{p-1}(\frac{1}{q'}-\frac{1}{p}+\frac{1}{N})-\frac{1}{q'}}+
\left(|B_r|^{\frac{p}{N}-\frac{1}{q}}\|f\|_{L^q(B_r)}{S_{N,p}}\right)^\frac{1}{p-1}\right)\\
&\leq C(N,p,q) \left(|B_r|^{\frac{p}{N}-\frac{1}{q}}\|f\|_{L^q(B_r)}{S_{N,p}}\right)^\frac{1}{p-1}.
\end{split}
\]
{\bf Comment on the case $p=N$.} For the case $p=N$, we simply replace the Sobolev embedding with the embedding inequality of $W_0^{1,N}(B_r)$ into $L^{q}$ for $q$ large.
\end{proof}

\begin{proof}[~Proof of Theorem \ref{teo:loc_bound}] We may assume $x_0=0$. Upon using the rescaling $x\mapsto Rx$ it is also enough to prove the estimate
$$
\|u^+\|_{L^\infty(B_{\sigma })}\leq C(N,p,s,\sigma)\,\left[\left(\fint_{B_{1}} |u^+|^p\, dx\right)^\frac{1}{p}+\mathrm{Tail}_{p-1,s\,p,s\,p}(u^+;0,1 )+\|f_R\|_{L^q(B_{1})}^\frac{1}{p-1}\right],
$$
for a solution of 
$$
-\Delta_p u+AR^{p-sp}(-\Delta_p)^s\,u=f_R(x):=R^p f(Rx)
$$
in $B_1$. Take $\rho=(1-\sigma)/2+\sigma$ and let $v$ be the solution of
\[
\left\{\begin{array}{rcll}
-\Delta_p v+AR^{p-sp}(-\Delta_p)^s\,v&=&0,&\mbox{ in }B_{\rho},\\
v&=&u,& \mbox{ in }\mathbb{R}^N\setminus B_{\rho}.
\end{array}
\right.
\]
By Proposition \ref{prop:Linfty} 
\[
\|(u-v)^+\|_{L^\infty(B_{\rho})}\le C(N,p,q) \left(|B_{\rho}|^{\frac{p}{N}-\frac{1}{q}}\|f_R\|_{L^q(B_{\rho}))}\right)^\frac{1}{p-1}=C(N,p,q,\sigma)\|f_R\|_{L^q(B_{\rho})}^\frac{1}{p-1}.
\]
Moreover, by \cite[Theorem 4.2]{GK} (note that $R^{p-sp}< 1$, since $R<1$)
$$
\|v^+\|_{L^\infty(B_{\sigma })}\le C(N,p,s)\,\left[\left(\fint_{B_{2\sigma}} |v^+|^p\, dx\right)^\frac{1}{p}+\mathrm{Tail}_{p-1,s\,p,s\,p}(v^+;0,\sigma )\right],
$$
Therefore, 
\[
\begin{split}
\|u^+\|_{L^\infty(B_{\sigma })}&\le \|v^+\|_{L^\infty(B_{\sigma })}+\|(u-v)^+\|_{L^\infty(B_{\sigma })}
\\
&\leq C(N,p,s)\,\left[\left(\fint_{B_{2\sigma}} |v^+|^p\, dx\right)^\frac{1}{p}+\mathrm{Tail}_{p-1,s\,p,s\,p}(v^+;0,\sigma)\right]+C(N,p,q,\sigma)\|f_R\|_{L^q(B_{\rho})}^\frac{1}{p-1}\\
&\leq C(N,p,q,s,\sigma)\,\left[\left(\fint_{B_{\rho}} |v^+|^p\, dx\right)^\frac{1}{p}+\mathrm{Tail}_{p-1,s\,p,s\,p}(v^+;0,\rho)+\|f_R\|_{L^q(B_{\rho})}^\frac{1}{p-1}\right]\\
&\leq C(N,p,q,s,\sigma)\,\left[\left(\fint_{B_{1}} |u^+|^p\, dx\right)^\frac{1}{p}+\mathrm{Tail}_{p-1,s\,p,s\,p}(u^+;0,1 )+\|f_R\|_{L^q(B_{1})}^\frac{1}{p-1}\right],
\end{split}
\]
where we used Lemma \ref{lem:w1p} to estimate the $L^p$-norm of $v^+$ in terms of the $L^p$-norm of $u^+$ and the fact that $u=v$ outside $B_{\rho}$ to estimate the tail term. This is the desired result.
\end{proof}

\subsection{Higher H\"older regularity}
Here we turn our attention to the regularity of the inhomogenous equation. We first establish the regularity when $f$ is small and then extend this to the desired result.
\begin{prop} 
\label{prop:caffsilv}
Let $2\leq p<\infty$, $0<s<1$ and $q$ be such that
\[
\left\{\begin{array}{lr}
q>\dfrac{N}{\,p},& \mbox{ if } p\le N,\\
&\\
q\ge 1,& \mbox{ if }p>N,
\end{array}
\right.
\]
We consider $\Theta=\Theta(N,p,q)$ the exponent defined as
$$
\Theta = \min\Big\{1,\frac{p-N/q}{p-1},\frac{sp}{p-1}\Big\}.
$$
For every $0<\varepsilon<\Theta$ there exists $\eta(N,p,q,s,\varepsilon)>0$ such that if $f\in L^q_{\rm loc}(B_4(x_0))$ and
\[
\|f\|_{L^q(B_1(x_0))}\leq \eta,\quad 0\leq A\leq 1,
\] 
then every weak solution $u\in W^{1,p}_{\rm loc}(B_4(x_0))\cap L^{p-1}_{s\,p}(\mathbb{R}^N)$ of the equation
\[
-\Delta_p u+A(-\Delta_p)^s u=f,\qquad \mbox{ in }B_4(x_0),
\]
that satisfy
\begin{equation}
\label{startup}
\|u\|_{L^\infty(B_1(x_0))}\leq 1,\qquad \int_{\mathbb{R}^N\setminus B_1(x_0)}\frac{|u|^{p-1}}{|x|^{N+s\,p}}\, dx\leq 1
\end{equation}
belongs to $C^{\Theta-\varepsilon}(\overline{B_{1/8}(x_0)})$ with the estimate
$$
[u]_{C^{\Theta-\varepsilon}(\overline{B_{1/8}}(x_0))}\leq C(N,s,p,q,\varepsilon).
$$
\end{prop}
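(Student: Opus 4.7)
The plan is to establish H\"older regularity by iterating a one-step decay estimate obtained from comparison with the homogeneous equation. Without loss of generality we take $x_0=0$, and the aim is to find a radius $r=r(N,s,p,q,\varepsilon)\in (0,1/2)$ and a sequence of constants $\{c_k\}_{k\geq 0}$ with $c_0=0$ such that
\[
\sup_{B_{r^k}}|u-c_k|\leq r^{k(\Theta-\varepsilon)},\qquad |c_{k+1}-c_k|\leq C\,r^{k(\Theta-\varepsilon)},
\]
with $C$ independent of $k$. This decay at all dyadic scales immediately yields pointwise H\"older regularity at the origin with exponent $\Theta-\varepsilon$, and running the same argument at every point of $B_{1/8}$ (the slightly larger ambient ball $B_1$ leaves room for the tail bounds in \eqref{startup}) furnishes the seminorm estimate.

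The one-step improvement comes from comparing $u$ with the weak solution $v$ of the homogeneous equation $-\Delta_p v + A(-\Delta_p)^s v=0$ in $B_1$ taking the same exterior data. Proposition \ref{prop:Linfty} and the smallness of $\|f\|_{L^q(B_1)}\leq \eta$ yield $\|u-v\|_{L^\infty(B_1)}\leq C\,\eta^{1/(p-1)}$, while Theorem \ref{teo:loc_bound} and Theorem \ref{teo:1} applied to $v$ give, for every $\delta\in(0,1)$,
\[
\sup_{B_{\rho}}|v-v(0)|\leq C(N,s,p,\delta)\,\rho^{\delta},\qquad \rho\in (0,1/2).
\]
Fixing $\delta\in (\Theta-\varepsilon,1)$, choosing $r$ so small that $C\,r^{\delta}\leq \tfrac12 r^{\Theta-\varepsilon}$ and then $\eta$ so small that $C\,\eta^{1/(p-1)}\leq \tfrac12 r^{\Theta-\varepsilon}$, the triangle inequality produces the one-step decay $\sup_{B_r}|u-v(0)|\leq r^{\Theta-\varepsilon}$, which is the base case of the induction with $c_1:=v(0)$.

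To iterate, I would pass to the rescaled function
\[
u_k(x):=\frac{u(r^k x)-c_k}{r^{k(\Theta-\varepsilon)}},\qquad x\in \mathbb{R}^N,
\]
which solves $-\Delta_p u_k + A_k(-\Delta_p)^s u_k = f_k$ in $B_{r^{-k}}$ with
\[
A_k=A\,r^{k(p-sp)}\in[0,1],\qquad f_k(x)=r^{k(p-(\Theta-\varepsilon)(p-1))}\,f(r^k x).
\]
The constraint $\Theta\leq (p-N/q)/(p-1)$ makes $p-(\Theta-\varepsilon)(p-1)-N/q\geq 0$, so $\|f_k\|_{L^q(B_1)}\leq \|f\|_{L^q(B_1)}\leq \eta$, and the induction hypothesis directly gives $\|u_k\|_{L^\infty(B_1)}\leq 1$. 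Provided the tail condition for $u_k$ is also preserved, the one-step improvement applies and yields $c_{k+1}:=c_k+r^{k(\Theta-\varepsilon)}\,v_k(0)$, closing the induction.

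The main obstacle, and precisely what forces the restriction $\Theta\leq sp/(p-1)$, is controlling the tail of $u_k$ uniformly in $k$. A change of variables produces
\[
\int_{\mathbb{R}^N\setminus B_1}\frac{|u_k(y)|^{p-1}}{|y|^{N+sp}}\,dy = r^{k[sp-(p-1)(\Theta-\varepsilon)]}\int_{\mathbb{R}^N\setminus B_{r^k}}\frac{|u(z)-c_k|^{p-1}}{|z|^{N+sp}}\,dz,
\]
and the remaining integral must be estimated by splitting $\mathbb{R}^N\setminus B_{r^k}$ into the dyadic annuli $B_{r^j}\setminus B_{r^{j+1}}$ for $j=0,\dots,k-1$ together with the exterior region $\mathbb{R}^N\setminus B_1$. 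The induction hypothesis combined with the telescoping bound $|c_k-c_j|\leq C\sum_{i=j}^{k-1}r^{i(\Theta-\varepsilon)}\leq C\,r^{j(\Theta-\varepsilon)}$ yields $|u(z)-c_k|\leq C\,r^{j(\Theta-\varepsilon)}$ whenever $|z|\leq r^j$. Each annular contribution is then of order $r^{j[(p-1)(\Theta-\varepsilon)-sp]}$, and the corresponding geometric sum is of order $r^{k[(p-1)(\Theta-\varepsilon)-sp]}$, which exactly cancels the prefactor $r^{k[sp-(p-1)(\Theta-\varepsilon)]}$ leaving a $k$-independent bound; this bound is then absorbed by a fixed multiplicative renormalization in the inductive scheme. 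This scale-by-scale tail bookkeeping is the delicate technical heart of the proof and is the reason the definition of $\Theta$ includes the threshold $sp/(p-1)$, in accordance with the discussion following Theorem \ref{nonthm}.
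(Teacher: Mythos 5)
Your overall strategy is the same as the paper's: an induction over scales in which $u$ is compared with the solution of the homogeneous Dirichlet problem, with Proposition \ref{prop:Linfty} giving the $L^\infty$--closeness of order $\eta^{1/(p-1)}$, Theorem \ref{teo:1} giving the decay of the comparison function, the condition $\Theta\le (p-N/q)/(p-1)$ guaranteeing $\|f_k\|_{L^q(B_1)}\le\eta$ at every scale, and the nonlocal tail as the delicate point. The local part of your scheme (fix $\delta>\Theta-\varepsilon$, then choose $r$, then $\eta$) is correct.

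The gap is in the tail estimate, and it is exactly at the ``delicate technical heart'' you point to. Set $\mu=sp-(p-1)(\Theta-\varepsilon)\ge \varepsilon(p-1)>0$. Your $j$-th annular contribution is not of order $r^{j[(p-1)(\Theta-\varepsilon)-sp]}$ with a harmless constant: since $\int_{B_{r^j}\setminus B_{r^{j+1}}}|z|^{-N-sp}\,dz=\frac{c_N}{sp}\,r^{-jsp}(r^{-sp}-1)$, each term carries the factor $r^{-sp}-1$, and after summing the geometric series and multiplying by the prefactor $r^{k\mu}$ you are left with a $k$-independent bound of size $\frac{C\,(r^{-sp}-1)}{sp\,(r^{-\mu}-1)}\approx C\,r^{-(p-1)(\Theta-\varepsilon)}$, which blows up as $r\to 0$ --- and you must take $r$ small to beat the exponent gap $\delta>\Theta-\varepsilon$. (Using finer annuli does not help: the pointwise bound available at radius $\rho\in[r^{j+1},r^j]$ is only $Cr^{-(\Theta-\varepsilon)}\rho^{\Theta-\varepsilon}$, so the same loss reappears.) Nor can this be ``absorbed by a fixed multiplicative renormalization'': if the induction hypothesis is weakened to $\sup_{B_{r^j}}|u-c_j|\le M\,r^{j(\Theta-\varepsilon)}$, the sup bound and the tail of $u_k/M$ both scale by $M^{p-1}$, so the normalized tail is unchanged; and feeding a tail of size $r^{-(p-1)(\Theta-\varepsilon)}$ into Theorem \ref{teo:1} inflates the H\"older constant of $v_k$ by $r^{-(\Theta-\varepsilon)}$, which forces $\delta>2(\Theta-\varepsilon)$ in the one-step decay --- unavailable when $\Theta-\varepsilon\ge 1/2$. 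The missing ingredient, which is what the paper propagates, is the intermediate pointwise bound $|w_k(x)|\le 2C\eta^{1/(p-1)}+C_1|x|^{\Theta-\varepsilon/2}$ on $B_{1/2}$ (estimate \eqref{estimata}), with the \emph{strictly better} exponent $\Theta-\varepsilon/2$. After rescaling this gives $|w_{k+1}(x)|\le (1+C_1|x|^{\Theta-\varepsilon/2})\,\lambda^{\varepsilon/2}$ on $B_{1/(2\lambda)}$, and it is the extra smallness factor $\lambda^{\varepsilon(p-1)/2}$ in \eqref{nonlocal1}--\eqref{nonlocal3}, not the mere convergence of a dyadic sum, that makes the tail $\le 1$; the paper then needs only the three regions $B_{1/(2\lambda)}\setminus B_1$, $B_{1/\lambda}\setminus B_{1/(2\lambda)}$ and $\mathbb{R}^N\setminus B_{1/\lambda}$, with no summation over all previous scales. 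You should replace your dyadic bookkeeping by this one-scale recursion carrying the improved exponent; you should also note that passing from regularity at one point to the seminorm on $B_{1/8}$ requires renormalizing by a constant $L$ to restore the tail condition after translating the center, as in Part 2 of the paper's proof.
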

\begin{proof} 
Without loss of generality, we may assume that $x_0=0$. We divide the proof in two parts.
\vskip.2cm\noindent
{\bf Part 1: Regularity at the origin}. Here we prove that for every $0<\varepsilon<\Theta$ and every $0<r<1/2$, there exists $\eta$ and a constant $C=C(N,p,q,s,\varepsilon)>0$ such that if $f$ and $u$ are as above, then we have
\[
\sup_{x\in B_r} |u(x)-u(0)|\leq C\,r^{\Theta-\varepsilon}.
\]
Without loss of generality, we assume $u(0)=0$. Fix $0<\varepsilon<\Theta$ and observe that it is sufficient to prove that there exists $\lambda<1/2$ and $\eta>0$ (depending on $N,p,q,s$ and $\varepsilon$) such that if $f$ and $u$ are as above, then
\begin{equation}
\label{eq:keq}
\sup_{B_{\lambda^k}}|u|\leq \lambda^{k\,(\Theta-\varepsilon)},\qquad \int_{\mathbb{R}^N\setminus B_1}\left|\frac{u(\lambda^k\, x)}{\lambda^{k\,(\Theta-\varepsilon)}}\right|^{p-1}\,|x|^{-N-s\,p}\, dx\leq 1,
\end{equation}
for every $k\in\mathbb{N}$. Indeed, assume this is true. Then for every $0<r<1/2$, there exists $k\in \mathbb{N}$ such that $\lambda^{k+1}< r\le \lambda^k$. From the first property in \eqref{eq:keq}, we obtain
\[
\sup_{B_r} |u|\le \sup_{B_{\lambda^k}} |u|\le \lambda^{k\,(\Theta-\varepsilon)}=\frac{1}{\lambda^{\Theta-\varepsilon}}\,\lambda^{(k+1)\,(\Theta-\varepsilon)}\le C\,r^{\Theta-\varepsilon},
\]
as desired.
\par
We prove \eqref{eq:keq} by induction. For $k=0$, \eqref{eq:keq} holds true by the assumptions in \eqref{startup}. 
Suppose \eqref{eq:keq} holds up to $k$, we now show that it also holds for $k+1$, provided that 
\[
\|f\|_{L^q(B_1)}\le \eta,
\]
with $\eta$ small enough, but independent of $k$.
Define
$$
w_k=\frac{u(\lambda^k x)}{\lambda^{k\,(\Theta-\varepsilon)}}.
$$
By the hypotheses
\begin{equation}\label{eq:wkass}
\|w_k\|_{L^\infty(B_1)}\leq 1 \qquad \mbox{ and } \qquad \int_{\mathbb{R}^N\setminus B_1}\frac{|w_k|^{p-1}}{|x|^{N+s\,p}}\,dx\leq 1.
\end{equation}
Moreover
$$
-\Delta_p w_k(x)+A\,\lambda^{kp(1-s)}(-\Delta_p)^s w_k (x) = \lambda^{k\,[p\,-(\Theta-\varepsilon)(p-1)]}\,f(\lambda^k\, x)=:f_k(x), 
$$
so that  
$$\
\|f_k\|_{L^{q}(B_1)}=\lambda^{k(p\,-(\Theta-\varepsilon)(p-1))}\lambda^{-\frac{N}{q}\,k}\,\left(\int_{B_{\lambda^k}}|f|^{q}\,dx\right)^{\frac{1}{q}}\leq \|f\|_{L^{q}(B_1)}\le \eta.
$$
Here we used the hypotheses on $f$ and the definition of $\Theta$, and again the fact that $\lambda<1/2$. 
By Theorem \ref{prop:death!}, we may take $h_k$ to be the weak solution of 
$$
\left\{\begin{array}{rcll}
-\Delta_p h+A\,\lambda^{kp(1-s)}(-\Delta_p)^s h &=& 0,& \mbox{ in } B_1,\\
h&=&w_k,& \mbox{ in } \mathbb{R}^N\setminus B_1.
\end{array}
\right.
$$
By Proposition \ref{prop:Linfty}, we have 
\[
\|w_k-h_k\|_{L^\infty(B_{3/4})}<C\eta^\frac{1}{p-1},\quad C=C(N,p,q).\normalcolor
\]
Then, we have the following estimate
\begin{equation}
\label{estimata}
\begin{split}
|w_k(x)|&\leq |w_k(x)-h_k(x)|+|h_k(x)-h_k(0)|+|h_k(0)-w_k(0)|\\
&\leq  2C\eta^\frac{1}{p-1}+[h_k]_{C^{\Theta-\varepsilon/2}(B_{1/2})}\,|x|^{\Theta-\frac{\varepsilon}{2}},\qquad\qquad \mbox{ for }x\in B_{1/2},\\
\end{split}
\end{equation}
We also used that $h_k$ is $C^{\Theta-\varepsilon/2}$ in $(\overline{B_{1/2}})$ thanks to Theorem \ref{teo:1}, that implies\footnote{Note that Theorem \ref{teo:1} gives an estimate in $B_\frac14$, but by covering $B_\frac12$ with balls of radius $1/4$ this yields an estimate in $B_\frac12$.}
\[
[h_k]_{C^{{\Theta-\varepsilon/2}}(B_{1/2})}\le C\, \left(\|h_k\|_{L^\infty(B_{1})}+\Tail_{p-1,sp,sp}(h_k,0,1)\right)\leq C_1,\quad C_1=C_1(N,p,q,s,\varepsilon).
\]
Here we have observed that the quantities in the right-hand side are uniformly bounded, independently of $k$. Indeed, by the triangle inequality, Proposition \ref{prop:Linfty} and \eqref{eq:wkass} we have
\[
\|h_k\|_{L^\infty(B_{1})}\le  \|h_k-w_k\|_{L^\infty(B_{1})}+\|w_k\|_{L^\infty(B_{1})}\le C\eta^\frac{1}{p-1}+1.
\]
\par
Let
$$
w_{k+1}(x)=\frac{u(\lambda^{k+1}\, x)}{\lambda^{(k+1)\,(\Theta-\varepsilon)}}=\frac{w_k(\lambda\, x)}{\lambda^{\Theta-\varepsilon}}.
$$
By choosing $\eta$ so that $2C\eta^\frac{1}{p-1}<\lambda^\Theta$ and $\lambda$ small enough, we can transfer estimate \eqref{estimata} to $w_{k+1}$. Indeed, we have 
\[
\begin{split}
|w_{k+1}(x)|\leq  2C\eta^\frac{1}{p-1}\,\lambda^{\varepsilon/2-\Theta}+C_1\,\lambda^{\varepsilon/2}|x|^{\Theta-\varepsilon/2}\leq (1+C_1\,|x|^{\Theta-\varepsilon/2})\,\lambda^{\varepsilon/2},\qquad  x\in B_\frac{1}{2\lambda}.
\end{split}
\]
The previous estimate implies in particular that $\|w_{k+1}\|_{L^\infty(B_1)}\leq 1$ for $\lambda$ satisfying
\begin{equation}
\label{uno}
\lambda<\min\left\{\frac{1}{2},(1+C_1)^{-\frac{2}{\varepsilon}}\right\}.
\end{equation}
This information, rescaled back to $u$, is exactly the first part of \eqref{eq:keq} for $k+1$. As for the second part of \eqref{eq:keq}, the upper bound for $|w_{k+1}|$ and the fact that $\Theta<\frac{sp}{p-1}$ imply
\begin{equation}
\label{nonlocal1}
\begin{split}
\int_{B_{\frac{1}{2\lambda}}\setminus B_{1}} \frac{|w_{k+1}|^{p-1}}{|x|^{N+s\,p\,}}\,dx&\leq \lambda^{\varepsilon\, (p-1)/2}\int_{B_{\frac{1}{2\lambda}}\setminus B_{1}} \frac{(1+C_1\,|x|^{\Theta-\varepsilon/2})^{p-1}}{|x|^{N+s\,p}}\,dx\\
&\leq (1+C_1)^{p-1}\,\lambda^{\varepsilon\, (p-1)/2}\int_{B_{\frac{1}{2\lambda}}\setminus B_{1}} \frac{1}{|x|^{N+sp+(\varepsilon/2-\Theta)\,(p-1)}}\,dx\\
&\leq \frac{C_2}{s\,p-(\Theta-\varepsilon/2)\,(p-1)}\,\lambda^{\varepsilon\,(p-1)/2}.
\end{split}
\end{equation}
By a change of variables and using that $|w_k|\leq 1$ in $B_1$, we also see that
\begin{equation}
\label{nonlocal2}
\int_{B_{\frac{1}{\lambda}}\setminus B_{\frac{1}{2\lambda}}} \frac{|w_{k+1}|^{p-1}}{|x|^{N+s\,p\,}}\,dx=\lambda^{(\varepsilon-\Theta)\,(p-1)+s\,p}\,\int_{B_1\setminus B_\frac12} \frac{|w_k(x)|^{p-1}}{|x|^{N+s\,p}}\,dx\leq {C_3\,\lambda^{\varepsilon\,(p-1)/2}}.
\end{equation}
In addition, by the integral bound on $w_k$ in \eqref{eq:wkass}
\begin{equation}
\label{nonlocal3}
\int_{\mathbb{R}^N\setminus B_{\frac{1}{\lambda}}} \frac{|w_{k+1}(x)|^{p-1}}{|x|^{N+s\,p}}\,dx= \lambda^{(\varepsilon-\Theta)\,(p-1)+s\,p}\,\int_{\mathbb{R}^N\setminus B_1}\frac{|w_k(x)|^{p-1}}{|x|^{N+s\,p}}\,dx\leq \lambda^{\varepsilon\, (p-1)/2}.
\end{equation}
In both estimates, we have also used that $\lambda<1/2$ and the fact that
\begin{equation}
\label{eq:spdecay}
(\varepsilon-\Theta)\,(p-1)+s\,p\ge \varepsilon\,\frac{p-1}{2}.
\end{equation}
We observe that the constants $C_2$ and $C_3$ depend on $N,p,q,s$ and $\varepsilon$ only.
From \eqref{nonlocal1}, \eqref{nonlocal2} and \eqref{nonlocal3},
we get that the second part of \eqref{eq:keq} holds, provided that
$$
\left(\frac{C_2}{\varepsilon\,(p-1)}+C_3+1\right)\,\lambda^{\varepsilon\,(p-1)/2}\leq 1.
$$
By taking \eqref{uno} into account, we finally obtain that \eqref{eq:keq} holds true at step $k+1$ as well, provided that $\lambda$ and $\eta$ (depending on $N,p,q,s$ and $\varepsilon$) are chosen so that
\[
\lambda<\min\left\{\frac{1}{2},(1+C_1)^{-\frac{2}{\varepsilon}}, \left(\frac{C_2}{\varepsilon\,(p-1)}+C_3+1\right)^\frac{2}{\varepsilon\,(p-1)}\right\}\qquad \mbox{ and }\qquad 2C\eta^\frac{1}{p-1}<\frac{\lambda^\Theta}{2}.
\]
The induction is complete.
\vskip.2cm\noindent
\textbf{Part 2:} We now show the desired regularity in the whole ball $B_{1/8}$. We choose $0<\varepsilon<\Theta$ and take the corresponding $\eta$, obtained in {\bf Part 1}. Take $z_0\in B_{1/2}$, let $L=2^{N+1}\,(1+|B_1|)$ and define
$$
v(x):=L^{-\frac{1}{p-1}}\,u\left(\frac{x}{2}+z_0\right),\qquad x\in \mathbb{R}^N.
$$
We observe that $v\in W^{1,p}_{\rm loc}(B_4)\cap L^{p-1}_{s\,p}(\mathbb{R}^N)$ and that $v$ is a weak solution in $B_4$ of 
\[
-\Delta_p v(x) +A\,2^{sp-p}(-\Delta_p)^s v(x)=\frac{2^{-p}}{L}\,f\left(\frac{x}{2}+z_0\right)=:\widetilde f(x),
\]
with
\[
\left\|\widetilde f\right\|_{L^{q}(B_1)}=\frac{2^{N/q-p}}{L}\,\|f\|_{L^{q}(B_{\frac12}(z_0))}\le \frac{2^{N/q-p}}{L}\,\eta<\eta.
\]
By construction, we also have
\[
\|v\|_{L^\infty(B_1)}\leq 1,
\] 
and since $B_{1/2}(z_0)\subset B_1$, it follows that
\[
\begin{split}
\int_{\mathbb{R}^N\setminus B_1}\frac{|v(x)|^{p-1}}{|x|^{N+s\,p}}\, dx&=\frac{2^{-s\,p}}{L}\,\int_{\mathbb{R}^N\setminus B_{1/2}(z_0)}\frac{|u(y)|^{p-1}}{|y-z_0|^{N+s\,p}}\,dy\\
&\le \frac{1}{L}\,\left(\frac{1}{2}\right)^{s\,p}\,\left(\frac{1}{1-|z_0|}\right)^{N+s\,p}\,\int_{\mathbb{R}^N\setminus B_1}\frac{|u(y)|^{p-1}}{|y|^{N+s\,p}}\,dy+\frac{2^{N}}{L}\,\|u\|^{p-1}_{L^{p-1}(B_1)}\\
&\leq \frac{2^{N}}{L}\,\int_{\mathbb{R}^N\setminus B_1}\frac{|u(y)|^{p-1}}{|y|^{N+s\,p}}dy+\frac{2^N\,|B_1|}{L}\,\|u\|^{p-1}_{L^\infty(B_1)}\leq 1,
\end{split}
\]
by the definition of $L$ and the hypotheses in \eqref{startup}.
Here we have used Lemma 2.3 in \cite{BLS} with the balls $B_{1/2}(z_0)\subset B_1$. We may therefore apply {\bf Part 1} to $v$ and obtain
$$
\sup_{x\in B_r}|v(x)-v(0)|\leq C\,r^{\Theta-\varepsilon},\quad 0<r<\frac{1}{2}.
$$
In terms of $u$ this is the same as
\begin{equation}
\label{supest}
\sup_{x\in B_r(z_0)}|u(x)-u(z_0)|\leq C\,L^\frac{1}{p-1}\,r^{\Theta-\varepsilon},\qquad 0<r<\frac{1}{4}.
\end{equation}
We note that this holds for any $z_0\in B_{1/2}$. Now take any pair $x,y\in B_{1/8}$ and set $|x-y|= r$. We observe that $r<1/4$ and we set $z=(x+y)/2$. Then we apply \eqref{supest} with $z_0=z$ and obtain
\[
\begin{split}
|u(x)-u(y)|\leq |u(x)-u(z)|+|u(y)-u(z)|&\leq 2\sup_{w\in B_r(z)}|u(w)-u(z)|\\
&\leq 2\,C\,L^\frac{1}{p-1}\,r^{\Theta-\varepsilon}=2\,C\,L^\frac{1}{p-1}\,|x-y|^{\Theta-\varepsilon},
\end{split}
\]
which is the desired result.
\end{proof}

We are now in the position to prove Theorem \ref{nonthm}.
\begin{proof}[~Proof of Theorem \ref{nonthm}]
We may assume $x_0=0$ without loss of generality. We modify $u$ so that it fits into the setting of Proposition \ref{prop:caffsilv}. We choose $0<\delta<\Theta$, take $\eta$ as in Proposition \ref{prop:caffsilv}  with the choice $\varepsilon=\Theta-\delta$ and set
\[
\mathcal{A}_R=\|u\|_{L^\infty(B_{R})}+\left(R^{s\,p}\,\int_{\mathbb{R}^N\setminus B_{R}}\frac{|u(y)|^{p-1}}{|y|^{N+s\,p}}\,  dy\right)^\frac{1}{p-1}+\left(\frac{R^{p-N/q}\|f\|_{L^{q}(B_{R})}}{\eta}\right)^\frac{1}{p-1}.
\] 
By scaling arguments, it is sufficient to prove that the rescaled function
\[
u_R(x):=\frac{1}{\mathcal{A}_R}\,u(R\,x),\qquad \mbox{ for }x\in B_4,
\]
satisfies the estimate
\[
[u_R]_{C^{\delta}(B_{1/8})}\leq C.
\]
It is easily seen that the choice of $\mathcal{A}_R$ implies
$$
\|u_R\|_{L^\infty(B_1)}\leq 1,\qquad \int_{\mathbb{R}^N\setminus B_1}\frac{|u_R|^{p-1}}{|x|^{N+s\,p}}\, dx\leq 1. 
$$
In addition, $u_R$ is a weak solution of
$$
-\Delta_p u_R\,(x)+A\,R^{p-sp} u_R(-\Delta_p)^s u_R\, (x) = \frac{R^{p}}{\mathcal{A}_R^{p-1}}\,f(R\,x):= f_R(x),\qquad x\in B_4,
$$
with $\|f_R\|_{L^{q}(B_{1})}\le \eta$ and $R^{p-sp}<1$. We may therefore apply Proposition \ref{prop:caffsilv} with $\varepsilon=\Theta-\delta$ to $u_R$ and obtain
\[
[u_R]_{C^{\delta}(B_{1/8})}\leq C.
\]
This concludes the proof.
\end{proof}

\appendix
\section{Pointwise inequalities}\label{sec:app}

In this section, we list the pointwise inequalities used throughout the whole paper.

The following result can be found in \cite[page 97, Inequality (I)]{PLin}.
\begin{Lemma}\label{lem:pineq}
For $a,b\in \R^N$ and $p\ge 2$, we have
$$
\langle |a|^{p-2}a-|b|^{p-2},a-b\rangle \geq 2^{2-p}|a-b|^p.
$$
\end{Lemma}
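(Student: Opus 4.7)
The plan is to prove this classical uniform monotonicity estimate for the vector field $x\mapsto |x|^{p-2}x$ by reducing to a scalar minimization, exploiting the scale invariance.

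\textbf{Step 1 (Normalization).} Both sides scale as $\lambda^p$ under $(a,b)\mapsto(\lambda a,\lambda b)$, so I may assume $|a-b|=1$. Setting $e=a-b$, decompose $a=\lambda\,e + v$ and $b=(\lambda-1)\,e + v$, where $v\perp e$ and $\lambda\in\mathbb{R}$. A direct computation using $\langle a,e\rangle =\lambda$ and $\langle b,e\rangle=\lambda-1$ gives
$$
\bigl\langle |a|^{p-2}a - |b|^{p-2}b,\,a-b\bigr\rangle = \lambda\bigl(\lambda^2+|v|^2\bigr)^{(p-2)/2} + (1-\lambda)\bigl((\lambda-1)^2+|v|^2\bigr)^{(p-2)/2}=:\Phi(\lambda,|v|).
$$
The claim becomes $\Phi(\lambda,|v|)\geq 2^{2-p}$ for all $\lambda\in\mathbb{R}$ and all $|v|\geq 0$.

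\textbf{Step 2 (Scalar minimization).} I would first verify the extremal case $\lambda=1/2$, $v=0$: here $\Phi=2\cdot(1/2)\cdot(1/4)^{(p-2)/2}=2^{2-p}$, which corresponds to $a=-b$ in the original variables, so the claimed constant is sharp. For $\lambda\in[0,1]$ both summands are nonnegative; by the symmetry $\lambda\leftrightarrow 1-\lambda$ one checks $\partial_\lambda\Phi=0$ iff $\lambda=1/2$, while $\partial_{|v|^2}\Phi\geq 0$ because $\lambda,1-\lambda\geq 0$. Hence the minimum on $\{\lambda\in[0,1]\}$ is $2^{2-p}$. For $\lambda\notin[0,1]$, the positive summand has magnitude at least $(1+|\lambda|)^{p-1}$ or $(1+|\lambda-1|)^{p-1}$, which dominates the remaining (negative) contribution since $p\geq 2$.

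\textbf{Main obstacle.} The genuinely delicate regime is $\lambda\notin[0,1]$ with $p$ close to $2$, where the sign of $\partial_{|v|^2}\Phi$ is not manifestly positive (one of the factors $\lambda$ or $1-\lambda$ is negative), so a naive "set $v=0$'' reduction is not automatic. However, at $v=0$ the scalar minimum outside $[0,1]$ equals $1$, which is strictly larger than $2^{2-p}$ for $p>2$, and this gap provides enough slack to absorb any variation in $|v|$.

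Alternatively, this is exactly Inequality~(I) on page~97 of \cite{PLin}, where a proof is given via the representation $\langle|a|^{p-2}a-|b|^{p-2}b,\,a-b\rangle = \int_0^1 \bigl(|z_t|^{p-2}|a-b|^2 + (p-2)|z_t|^{p-4}\langle z_t,a-b\rangle^2\bigr)\,dt$ with $z_t=(1-t)b+ta$, and one may simply cite it.
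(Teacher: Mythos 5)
The paper offers no proof of this lemma at all: it is stated with the single line ``can be found in \cite[page 97, Inequality (I)]{PLin}'', so your fallback option of simply citing Lindqvist is exactly what the authors do. Your direct argument is a genuinely different, self-contained route, and most of it is sound: the scaling normalization, the decomposition $a=\lambda e+v$, $b=(\lambda-1)e+v$ with $v\perp e$, the resulting formula for $\Phi$, the sharpness check at $\lambda=1/2$, $v=0$ (i.e.\ $a=-b$), and the case $\lambda\in[0,1]$ (where $\partial_{|v|^2}\Phi\ge 0$ legitimately reduces the problem to minimizing $\lambda^{p-1}+(1-\lambda)^{p-1}$ on $[0,1]$) are all correct.

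The gap is in the regime $\lambda\notin[0,1]$. Your stated mechanism --- that the scalar minimum at $v=0$ equals $1>2^{2-p}$ and ``this gap provides enough slack to absorb any variation in $|v|$'' --- is not a valid argument: for $p>2$ and $\lambda>1$ the negative summand $(1-\lambda)\bigl((\lambda-1)^2+|v|^2\bigr)^{(p-2)/2}$ tends to $-\infty$ as $|v|\to\infty$, so no finite margin at $v=0$ can absorb it; what actually saves the inequality is that the positive summand grows faster, not that there is slack at $v=0$. The correct one-line fix for $\lambda>1$ is: since $\lambda^2+|v|^2\ge(\lambda-1)^2+|v|^2$ and $p\ge 2$, we have $(\lambda-1)\bigl((\lambda-1)^2+|v|^2\bigr)^{(p-2)/2}\le(\lambda-1)\bigl(\lambda^2+|v|^2\bigr)^{(p-2)/2}$, whence $\Phi\ge\bigl(\lambda-(\lambda-1)\bigr)\bigl(\lambda^2+|v|^2\bigr)^{(p-2)/2}\ge\lambda^{p-2}\ge 1\ge 2^{2-p}$; the case $\lambda<0$ then follows from the symmetry $\Phi(\lambda,|v|)=\Phi(1-\lambda,|v|)$. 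With that replacement your proof is complete. (As a side remark, the statement as printed is missing a factor: it should read $|b|^{p-2}b$, as you implicitly assumed.)
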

For the following result, see \cite[page 99, Inequality (V)]{PLin}.
\begin{Lemma}\label{in1}
Let $a,b\in\mathbb{R}^N$. Then for any $p\ge 2$, we have
\begin{equation}\label{V}
\langle|b|^{p-2}b-|a|^{p-2}a,b-a\rangle\geq \frac{4}{p^2}\Big||b|^\frac{p-2}{2}b-|a|^\frac{p-2}{2}a\Big|^2.
\end{equation}
\end{Lemma}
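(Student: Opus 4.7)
The plan is to reduce this vector-valued inequality to a pointwise comparison of quadratic forms via an integral along the segment from $a$ to $b$. Introduce the auxiliary vector fields $V(x)=|x|^{p-2}x$ and $G(x)=|x|^{(p-2)/2}x$, so that $|G(x)|^2=|x|^p$ and the target inequality reads
$$
\langle V(b)-V(a),\,b-a\rangle \;\geq\; \tfrac{4}{p^2}\,|G(b)-G(a)|^2.
$$
For $p\geq 2$, both $V$ and $G$ are of class $C^1$ away from the origin (and continuous at the origin), which is enough to differentiate along the segment.

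First I would parametrise the segment by $\gamma(t)=a+t(b-a)$, $t\in[0,1]$, and use the fundamental theorem of calculus to write
$$
V(b)-V(a)=\int_0^1 DV(\gamma(t))(b-a)\,dt,\qquad G(b)-G(a)=\int_0^1 DG(\gamma(t))(b-a)\,dt.
$$
Pairing the first identity with $b-a$ and applying the Cauchy--Schwarz inequality in the variable $t$ to the second, the claim reduces to the pointwise bound
$$
|DG(x)v|^2 \;\leq\; \tfrac{p^2}{4}\,\langle DV(x)v,v\rangle,\qquad x\neq 0,\ v\in\mathbb{R}^N.
$$

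A direct computation of the Jacobians gives $\langle DV(x)v,v\rangle=|x|^{p-2}|v|^2+(p-2)|x|^{p-4}\langle x,v\rangle^2$ and $|DG(x)v|^2=|x|^{p-2}|v|^2+\tfrac{(p-2)(p+2)}{4}|x|^{p-4}\langle x,v\rangle^2$. Subtracting, the pointwise inequality becomes
$$
\tfrac{(p-2)(p+2)}{4}\,|x|^{p-2}|v|^2 \;+\; \tfrac{(p-2)^2(p+1)}{4}\,|x|^{p-4}\langle x,v\rangle^2 \;\geq\; 0,
$$
which is immediate for $p\geq 2$ since both coefficients are non-negative. This closes the argument.

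The only genuinely delicate point I foresee is the situation in which the segment $\gamma(t)$ passes through the origin, where $DV$ and $DG$ blow up. This is, however, only a bookkeeping issue: all integrands are dominated by $C\,|\gamma(t)|^{p-2}|b-a|^2$, which is integrable on $[0,1]$ when $p\geq 2$; alternatively one can first establish the inequality for pairs $(a,b)$ bounded away from $0$ and then pass to the limit using the continuity of $V$ and $G$. Beyond this mild technicality, the whole proof is just a Jacobian computation plus Cauchy--Schwarz.
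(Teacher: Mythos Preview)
Your proof is correct. The paper does not actually prove this lemma; it simply cites it as Inequality (V) from Lindqvist's notes \cite{PLin}. Your argument via the integral representation along the segment and the Cauchy--Schwarz inequality applied to $G(b)-G(a)=\int_0^1 DG(\gamma(t))(b-a)\,dt$, together with the explicit Jacobian comparison, is the standard route to this estimate and matches the spirit of Lindqvist's treatment. One small remark: your concern about the origin is milder than you suggest, since for $p\ge 2$ the maps $x\mapsto DV(x)$ and $x\mapsto DG(x)$ extend continuously to $x=0$ (the singular-looking terms $|x|^{p-4}x\otimes x$ are dominated by $|x|^{p-2}$), so no limiting argument is even needed.
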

For the following inequality, see \cite[page 100, Inequality (VI)]{PLin}.
\begin{Lemma}\label{in2}
Let $a,b\in\mathbb{R}^N$. Then, for any $p\ge 2$, we have
\begin{equation}\label{VI}
\Big||b|^{p-2}b-|a|^{p-2}a\Big|\leq(p-1)\Big(|b|^\frac{p-2}{2}+|a|^\frac{p-2}{2}\Big)\Big||b|^\frac{p-2}{2}b-|a|^\frac{p-2}{2}a\Big|.
\end{equation}
\end{Lemma}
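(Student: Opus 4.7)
The plan is to exploit a factorization: the map $F(x) := |x|^{p-2}x$ can be written as $F = \psi \circ G$, where $G(x) := |x|^{(p-2)/2}x$ is precisely the ``half-power'' map appearing on the right-hand side of the inequality, and $\psi(y) := |y|^{r}y$ with $r := (p-2)/p \in [0,1)$. The identity $F = \psi \circ G$ follows from $|G(x)| = |x|^{p/2}$, which gives $|G(x)|^{r} = |x|^{(p-2)/2}$, and hence $\psi(G(x)) = |x|^{(p-2)/2} \cdot |x|^{(p-2)/2}\, x = |x|^{p-2} x$.

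Given this, the lemma reduces to a Lipschitz-type estimate for $\psi$ between the points $X := G(a)$ and $Y := G(b)$. I would compute $\psi'(y) = |y|^{r} I + r|y|^{r-2}\, y \otimes y$, whose operator norm equals $(1+r)|y|^{r}$ (the eigenvalue in the radial direction is $(1+r)|y|^r$, and $|y|^r$ in directions orthogonal to $y$). By the fundamental theorem of calculus along the straight segment joining $X$ and $Y$,
\begin{equation*}
|\psi(Y) - \psi(X)| \;\le\; (1+r)\,|Y - X|\,\int_0^1 |(1-t)X + tY|^{r}\,dt.
\end{equation*}

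The key observation is that $0 \le r \le 1$ for every $p \ge 2$, so $s \mapsto s^{r}$ is subadditive on $[0,\infty)$. Combined with the triangle inequality $|(1-t)X + tY| \le (1-t)|X| + t|Y|$, this yields the $t$-uniform bound $|(1-t)X + tY|^{r} \le ((1-t)|X|)^r + (t|Y|)^r \le |X|^{r} + |Y|^{r}$. Plugging in $|X|^{r} = |a|^{(p-2)/2}$, $|Y|^{r} = |b|^{(p-2)/2}$, and using $1 + r = 2(p-1)/p \le p-1$ for $p \ge 2$, we arrive at the claimed inequality (in fact with the slightly better constant $2(p-1)/p$). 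The only real ``idea'' is to recognize the factorization $F = \psi \circ G$, which converts the claim into a controlled Lipschitz estimate for $\psi$; after that, everything is routine bookkeeping of exponents, and no further machinery is needed.
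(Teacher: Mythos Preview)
Your proof is correct. The paper itself does not supply a proof of this inequality; it simply cites Lindqvist's notes (Inequality~(VI), page~100). Your argument via the factorization $|x|^{p-2}x=\psi(G(x))$, with $G(x)=|x|^{(p-2)/2}x$ and $\psi(y)=|y|^{(p-2)/p}y$, followed by the mean-value estimate for $\psi$, is self-contained and clean. Two small points worth recording: (i) the map $\psi$ is indeed $C^1$ on all of $\mathbb{R}^N$ since the exponent $r=(p-2)/p$ lies in $[0,1)$, so the fundamental theorem of calculus along the segment is fully justified even if it passes through the origin; (ii) the subadditivity of $t\mapsto t^r$ on $[0,\infty)$ for $0\le r\le 1$ together with $1+r=2(p-1)/p\le p-1$ for $p\ge 2$ are both correct, so you in fact obtain the inequality with the sharper constant $2(p-1)/p$ in place of $p-1$.
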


The following is Lemma A.5 in \cite{BLS}.
\begin{Lemma}\label{lem:powertest}
Let $p\ge 2$, $\gamma\ge 1$ and $a,b,c,d\in\mathbb{R}$. Then we have
\begin{equation}
\label{erik}
\begin{split}
\Big(J_p(a-c)-J_p(b-d)\Big)&\Big(J_{\gamma+1}(a-b)-J_{\gamma+1}(c-d)\Big)\\
&\ge \frac{1}{C}\, \Big||a-b|^\frac{\gamma-1}{p}\,(a-b)-|c-d|^\frac{\gamma-1}{p}\,(c-d)\Big|^p,
\end{split}
\end{equation}
for some $C=C(p,\gamma)>0$.
\end{Lemma}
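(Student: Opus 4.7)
The plan is to reduce the inequality to two well-known one-variable facts: Lemma \ref{lem:pineq} (which is already in the appendix) and a scalar Hölder-type inequality comparing the differences of $J_{\gamma+1}$ and $g(t):=|t|^{(\gamma-1)/p}\,t$. I would first introduce the abbreviations
\[
A=a-c,\qquad B=b-d,\qquad \alpha=a-b,\qquad \beta=c-d,
\]
and observe the crucial identity $A-B=\alpha-\beta$. Since $J_p$ and $J_{\gamma+1}$ are strictly increasing, $(J_p(A)-J_p(B))$ and $(J_{\gamma+1}(\alpha)-J_{\gamma+1}(\beta))$ both have the same sign as $\alpha-\beta=A-B$; in particular the left-hand side of \eqref{erik} is already non-negative and equals the product of the two absolute values.

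Next I would apply Lemma \ref{lem:pineq} (in its scalar form, $p\ge 2$) to the pair $A,B$:
\[
(J_p(A)-J_p(B))(A-B)\ge 2^{2-p}|A-B|^p,
\]
and use $A-B=\alpha-\beta$ to conclude
\[
|J_p(A)-J_p(B)|\ge 2^{2-p}|\alpha-\beta|^{p-1}.
\]
So the problem reduces to proving the one-variable inequality
\begin{equation}\label{plan:scalar}
|J_{\gamma+1}(\alpha)-J_{\gamma+1}(\beta)|\,|\alpha-\beta|^{p-1}\ge c_{p,\gamma}\,|g(\alpha)-g(\beta)|^p,
\end{equation}
which, combined with the previous bound, will yield \eqref{erik} with $C=2^{p-2}/c_{p,\gamma}$.

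The main obstacle is \eqref{plan:scalar}. I would prove it by a clean Hölder-in-one-variable trick. Assume without loss of generality $\beta<\alpha$. Observe the pointwise identity between the derivatives
\[
(g'(t))^p=\Bigl(\tfrac{\gamma+p-1}{p}\Bigr)^p|t|^{\gamma-1}=\frac{1}{\gamma}\Bigl(\tfrac{\gamma+p-1}{p}\Bigr)^p\,J_{\gamma+1}'(t),
\]
valid for $t\ne 0$ (and the set $\{0\}$ is negligible). Writing $g(\alpha)-g(\beta)=\int_\beta^\alpha g'(t)\,dt$ and applying Hölder's inequality with exponents $p$ and $p/(p-1)$, I would get
\[
|g(\alpha)-g(\beta)|^p\le(\alpha-\beta)^{p-1}\int_\beta^\alpha (g'(t))^p\,dt=\frac{1}{\gamma}\Bigl(\tfrac{\gamma+p-1}{p}\Bigr)^p(\alpha-\beta)^{p-1}(J_{\gamma+1}(\alpha)-J_{\gamma+1}(\beta)),
\]
which is exactly \eqref{plan:scalar} with explicit constant $c_{p,\gamma}=\gamma\,p^p/(\gamma+p-1)^p$. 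The case $\alpha<\beta$ is identical up to signs. Combining this with the estimate on $|J_p(A)-J_p(B)|$ and the sign remark from the first paragraph immediately gives \eqref{erik}.

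The only subtlety worth flagging is the use of the integral representation across $t=0$ when $\alpha$ and $\beta$ have opposite signs: the functions $g'$ and $J_{\gamma+1}'$ are in $L^1_{\mathrm{loc}}$ on any interval containing $0$ since $\gamma\ge 1$ (so $|t|^{\gamma-1}$ is bounded), hence the identities and Hölder's inequality above remain valid without modification, and no case analysis is needed.
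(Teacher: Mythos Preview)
Your proof is correct. Note, however, that the paper does not give its own proof of this lemma: it simply records the statement and cites it as Lemma~A.5 in \cite{BLS}. So there is no ``paper's proof'' to compare against; you have supplied a clean self-contained argument where the paper opted for a citation.

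For what it is worth, your route is essentially the standard one: reduce via the identity $A-B=\alpha-\beta$ and Lemma~\ref{lem:pineq} to the scalar inequality~\eqref{plan:scalar}, and then prove that by writing $g(\alpha)-g(\beta)$ as an integral and applying H\"older. The derivative computation $(g'(t))^p=\gamma^{-1}\bigl(\tfrac{\gamma+p-1}{p}\bigr)^p J_{\gamma+1}'(t)$ is correct, and since $\gamma\ge 1$ both $g'$ and $J_{\gamma+1}'$ are locally bounded (in particular across $t=0$), so your justification of the integral identities is fine. The resulting explicit constant $C=2^{p-2}(\gamma+p-1)^p/(\gamma\,p^p)$ is a small bonus over a bare citation.
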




\paragraph{Data availability} 
Data sharing not applicable to this article as no datasets were generated or analysed during the current study.

\noindent {\textsf{Prashanta Garain\\  Department of Mathematics\\ Uppsala University\\
Box 480, 751 06 Uppsala, Sweden}  \\
Current Affiliation: Department of Mathematics\\
Indian Institute of Technology Indore,\\
Khandwa Road, Simrol,\\
Indore 453552, India\\ 
\textsf{e-mail}: pgarain92@gmail.com\\

\noindent {\textsf{Erik Lindgren\\  Department of Mathematics, KTH -- Royal Institute of Technology\\ 100 44, Stockholm, Sweden}  \\
\textsf{e-mail}: eriklin@kth.se\\

\end{document}